\newtheorem{thm}{Theorem}[section]
\newtheorem{lem}[thm]{Lemma}
\newtheorem{prop}[thm]{Proposition}
\newtheorem{cor}[thm]{Corollary}
\theoremstyle{definition}
\newtheorem{defn}[thm]{Definition}
\newtheorem{ex}[thm]{Example}
\newtheorem{rmk}[thm]{Remark}
\newcommand\val{\operatorname{val}}
\newcommand\lk{\operatorname{lk}}
\newcommand\st{\operatorname{st}}
\newcommand\br{\operatorname{br}}
\newcommand\coker{\operatorname{coker}}
\newcommand\codim{\operatorname{codim}}
\newcommand\colim{\operatorname{colim}}
\begin{document}
\title{On the structure of braid groups on complexes}
\keywords{braid group, simplicial complex, surface embeddability, planarity}
\subjclass[2010]{Primary 20F36; Secondary 05E45, 57M20}
\author{Byung Hee An}
\email{anbyhee@ibs.re.kr}
\address{Center for Geometry and Physics, Institute for Basic Science (IBS), Pohang 37673, Republic of Korea}
\thanks{This work was supported by IBS-R003-D1.}
\author{Hyo Won Park}
\email{park.hyowon@gmail.com}
\address{Department of Mathematics, Ajou University, Suwon 16499, Republic of Korea}
\begin{abstract}
We consider the braid groups $\mathbf{B}_n(X)$ on finite simplicial complexes $X$, which are generalizations of those on both manifolds and graphs that have been studied already by many authors.
We figure out the relationships between geometric decompositions for $X$ and their effects on braid groups, and provide an algorithmic way to compute the group presentations for $\mathbf{B}_n(X)$ with the aid of them.

As applications, we give complete criteria for both the surface embeddability and planarity for $X$, which are the torsion-freeness of the braid group $\mathbf{B}_n(X)$ and its abelianization $H_1(\mathbf{B}_n(X))$, respectively.
\end{abstract}
\maketitle
\tableofcontents

\section{Introduction}

The braid group $\mathbf{B}_n(D^2)$ on a 2-disk $D^2$ was firstly introduced by E.~Artin in 1920's, and Fox and Neuwirth generalized it to braid groups $\mathbf{B}_n(X)$ on arbitrary topological spaces $X$ via {\em configuration spaces}, which are defined as follows. For a compact, connected topological space $X$, the {\em ordered configuration space $F_n(X)$} is the set of $n$-tuples of distinct points in $X$, and the orbit space $B_n(X)$ under the action of the symmetric group $\mathbf{S}_n$ on $F_n(X)$ permutting coordinates is called the {\em unordered configuration space} on $X$.
\[F_n(X)=X^n\setminus \Delta,\quad B_n(X)=F_n(X)/\mathbf{S}_n,\]
where
\[\Delta=\{(x_1,\dots,x_n)|x_i=x_j \text{ for some }i\neq j\}\subset X^n.\]

Let $\bar*_n$ and $*_n$ be basepoints for $F_n(X)$ and $B_n(X)$, respectively.
Then the {\em pure $n$-braid group} $\mathbf{P}_n(X,\bar*_n)$ and {\em (full) $n$-braid group $\mathbf{B}_n(X,*_n)$} are defined to be the fundamental groups of the configuration spaces $F_n(X)$ and $B_n(X)$, respectively. We will suppress basepoints and denote these groups by $\mathbf{P}_n(X)$ and $\mathbf{B}_n(X)$ unless any ambiguity occurs.

However, most of research on braid groups has been focused on braid groups on manifolds, more specifically, on surfaces, until the end of 20th century when Ghrist presented a pioneering paper \cite{Gh} about braid groups on {\em graphs $\Gamma$} which are finite, 1-dimensional simplicial complexes.
In 2000, Abrams defined in his Ph.~D. thesis \cite{Ab} a combinatorial version of configuration space, called a {\em discrete configuration space}, consisting of $n$ open cells in $\Gamma$ having pairwise no common boundaries. 
A discrete configuration space has the benefit that it admits a cubical complex structure making the description of paths of points easier. However it depends not only on homeomoprhic type but also the cell structure of the underlying graph $\Gamma$.
Abrams overcame this problem by proving stability up to homotopy under the subdivision of edges once $\Gamma$ is sufficiently subdivided.

Crisp and Wiest showed the embeddabilities between braid groups on graphs and surface groups into right-angled Artin groups, which is one of the most important subjects in geometric group theory. 
Farley and Sabalka in \cite{FS} used Forman's {\em Discrete Morse theory} \cite{For} on discrete configuration spaces to provide an algorithmic way to compute a presentation of $\mathbf{B}_n(\Gamma)$, and furthermore they figured out the relation between braid groups on trees and right-angled Artin groups.
On the extension of these works, Kim-Ko-Park in \cite{KKP} and Ko-Park in \cite{KP} provided geometric criteria for the braid group on a given graph to be right-angled Artin, and moreover a new algebraic criterion for the planarity of a graph, and answered some open questions as well.

On the contrary, for a simplicial complex, not manifold, of dimension 2 or higher, braid theory is still unexplored. We will focus on the braid groups on finite, connected simplicial complex $X$ of arbitrary dimension, which are generalizations of both graphs and surfaces. 
We consider modifications---attaching or removing higher cells, edge contraction or inverses, and so on--- and how these modifications change the braid groups.
Indeed, via suitable modifications we may obtain a {\em simple} complex $X'$ of dimension 2 whose vertices have very obvious links. Furthermore, this can be done without changing the braid group.

\begin{thm}\label{thm:simple}
Let $X$ be a complex. Then there is a simple complex $X'$ of dimension $2$ such that
$\mathbf{B}_n(X)\simeq\mathbf{B}_n(X')$ for all $n\ge 1$.
\end{thm}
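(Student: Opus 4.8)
The plan is to build $X'$ from $X$ by a finite sequence of the braid-group-preserving modifications indicated above, organized in three stages: normalize by subdivision, strip out all simplices of dimension $\ge 3$, and then tidy the resulting $2$-complex into simple form. To guarantee termination I would track a lexicographic complexity $(\dim X,\ \#\{\text{top simplices}\},\ \#\{\text{vertices}\})$ and arrange that each stage strictly decreases it. The first, cheapest, observation is that $\mathbf{B}_n(\,\cdot\,)$ is a homeomorphism invariant, so subdivision is free: I would subdivide (say barycentrically) as much as needed so that the preconditions of the higher-cell and contraction moves below are satisfied, each top-dimensional simplex meeting the rest of the complex along a controlled part of its boundary.

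The heart of the argument is the dimension-reduction stage. Let $d=\dim X\ge 3$ and let $\sigma$ be a $d$-simplex. One cannot simply delete $\sigma$: removing a top cell changes the braid group in general, since already $\mathbf{B}_n(S^2)\ne\mathbf{B}_n(D^3)$ for $n\ge 2$ (the latter being the symmetric group $\mathbf{S}_n$). The right point is that the only interaction of points inside $\mathring\sigma\cong\mathbb{R}^{d}$ is passing one another, and for $d\ge 3$ the local two-point configuration space $\mathbb{RP}^{d-1}$ contributes merely the order-two swap $\mathbb{Z}/2$. I would therefore replace the star $\st(\sigma)$ by a prescribed $2$-dimensional local model whose link is a graph engineered to reproduce exactly this $\mathbb{Z}/2$ local braiding, and then invoke the decomposition results to conclude that substituting local pieces with identical local braiding leaves $\mathbf{B}_n$ unchanged. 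Performing this for every $d$-simplex lowers the top dimension; iterating brings $X$ down to dimension $2$.

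In the last stage $X$ is $2$-dimensional, and I would apply edge contractions and their inverses, together with the remaining planar moves, each legitimate under the preconditions recorded earlier, to remove redundant vertices and edges until the simplicity conditions on the links and on the embedding dimensions $\edim(v)$ of the vertices are met. The complexity invariant again drops strictly at each move, so this stage terminates and outputs the desired simple complex $X'$ of dimension $2$.

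I expect the main obstacle to be the replacement step itself: checking that exchanging the neighborhood of a high-dimensional simplex for a $2$-dimensional local model is faithful for the \emph{global} braid group rather than only the local one. Concretely, one must verify that the engineered link really yields $\mathbb{Z}/2$ as its local braiding---and not $\mathbb{Z}$, as a generic interior $2$-cell point would, since there $\mathbb{RP}^{1}=S^1$---and that this local matching is compatible with the gluing encoded by the decomposition, so that the substitutions assemble into a single isomorphism $\mathbf{B}_n(X)\simeq\mathbf{B}_n(X')$ valid for all $n$ at once.
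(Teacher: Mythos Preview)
Your dimension-reduction step is the main gap. You correctly note that deleting a top cell can change $\mathbf{B}_n$, but your proposed fix---replacing $\st(\sigma)$ by a $2$-dimensional local model with an engineered $\mathbb{Z}/2$ local braiding---is both undefined and unnecessary, and you yourself flag that matching local braiding need not globalize. The paper bypasses all of this: for $k\ge 3$, attaching $D^k$ along an embedded $S^{k-1}\subset Y$ is already a braid equivalence whenever $Y\neq S^2$ (Proposition~\ref{prop:highercell}). One deletes a point $*\in\mathring D^k$ (codimension $\ge 3$, so invisible to $\pi_1$ of the configuration space) and then uses the radial retraction $r:D^k\setminus\{*\}\to\partial D^k$; the locus where $r$ fails to extend to configurations has codimension $k-1\ge 2$, so loops avoid it and homotopy disks meet it in isolated points which are pushed off using a branch point on $\partial D^k$. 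Iterating over all top cells gives $X^{(2)}\hookrightarrow X$ as a braid equivalence (Corollary~\ref{cor:2skeleton}), the single exception $(D^3,S^2)$ being killed by the preliminary subdivision. No local model, no $\mathbb{Z}/2$-versus-$\mathbb{Z}$ bookkeeping, is needed.

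Your simplification stage also goes in the wrong direction. The paper does not contract edges to reduce the vertex count; it \emph{expands} each non-simple vertex $w$ by the inverse of an edge contraction: pick one graph component $\Gamma$ of $\lk(w)$ and introduce a new edge $e=(v,w)$ with $\lk(v)=\Gamma\sqcup\{w\}$, so that the new vertex $v$ is simple while $w$ loses one graph component from its link. Proposition~\ref{prop:graphedgecontraction} (itself proved by temporarily capping $\lk(v)$ up to a disk so that Proposition~\ref{prop:edgecontraction} applies) shows $X\equiv_B X/\bar e$ here. The induction is on the total number of graph components in links of non-simple vertices, which strictly decreases. Your proposed complexity $(\dim X,\#\text{top simplices},\#\text{vertices})$ would \emph{increase} under this move, so it cannot certify termination.
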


Once we have a simple complex $X$, then it can be decomposed by {\em cuts} into much simpler pieces, and eventually into {\em elementary} complexes, where an elementary complex plays the role of a building block and can be thought as either a {\em star} graph or a manifold of dimension at least 2.
For the build-up process, we provide two types of {\em combination theorem} which are generalizations of capping-off and connected sum.
Furthermore, the combination theorems ensure that the build-up process preserves some geometry of the given pieces. In other words, the braid group $\mathbf{B}_n(X)$ captures some geometric properties of $X$ as observed before.

More precisely, we start with the obvious observations about the various embeddabilities of $X$ into manifolds as follows. 
For two complexes $X$ and $Y$, we denote by $Y\subset X$ and say that $X$ {\em contains} $Y$ if there is an simplicial embedding between them after sufficient subdivisions.
Then a complex $X$ embeds into
(i) a circle iff $T_3\not\subset X$; 
(ii) a surface iff $S_0\not\subset X$; and
(iii) a plane iff $K_5, K_{3,3}, S_0\not\subset X$.

The complexes $T_3$ and $S_0$ are the {\em tripod} and the cone $C(S^1\sqcup\{*\})$ of the union of a circle and a point, respectively. See Figure~\ref{fig:S_0}. The graphs $K_n$ and $K_{m,n}$ are complete and complete bipartite graphs, respectively. 

\begin{figure}[ht]
\[
T_3=\vcenter{\hbox{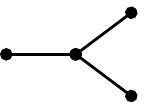}}\qquad\qquad
S_0=\vcenter{\hbox{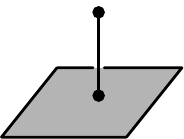}}
\]
\caption{A tripod $T_3$ and a complex $S_0$}
\label{fig:S_0}
\label{fig:tripod}
\end{figure}
 
Then it can be formulated as follows. 
\begin{thm}\label{thm:emb}
Let $X$ be a finite, connected simplicial complex different from $S^2$ and $\mathbb{R}P^2$. Then $X$ embeds into
\begin{enumerate}
\item a circle if and only if $\mathbf{B}_n(X)$ is abelian for any $n\ge 1$;
\item a surface if and only if $\mathbf{B}_n(X)$ is torsion-free for any $n\ge 1$;
\item a plane if and only if $H_1(\mathbf{B}_n(X))$ is torsion-free for any $n\ge 1$.
\end{enumerate}
Moreover, if $X$ does not embed into any surface, then $\mathbf{B}_n(X)$ contains $\mathbf{S}_n$ for any $n\ge 1$.
\end{thm}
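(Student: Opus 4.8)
\emph{Proof proposal.} Since $X$ does not embed into any surface, the observation preceding the theorem gives $S_0\subset X$: after a subdivision, $X$ contains a subcomplex homeomorphic to $S_0$, namely a $2$-disk $D$ together with a single edge $e=[c,w]$ attached at the interior cone point $c$ of $D$ and ending at a free vertex $w$. (In particular the hypothesis already excludes $X=S^2,\mathbb{R}P^2$, since those are surfaces.) The plan is to use this local copy of $S_0$ to build an explicit splitting of the permutation homomorphism $\pi\colon\mathbf{B}_n(X)\to\mathbf{S}_n$ arising from the regular $\mathbf{S}_n$-cover $F_n(X)\to B_n(X)$; such a splitting is exactly an embedded copy of $\mathbf{S}_n$. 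The construction is local, so a single copy of $S_0$ handles all $n\ge 1$ at once, and for $n=1$ the statement is vacuous.

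First I would take a basepoint configuration with all $n$ points resting in $D$, and describe, for each $i$, a \emph{spike swap} $\tau_i\in\mathbf{B}_n(X)$ that exchanges the $i$-th and $(i{+}1)$-st points while fixing the rest: lift point $i$ through the cone point $c$ up the free edge $e$, let point $i{+}1$ cross the disk into the vacated slot of point $i$, and then bring point $i$ back down into the old slot of point $i{+}1$. By construction $\pi(\tau_i)$ is the transposition $(i\ i{+}1)$, so $\pi$ carries $\langle\tau_1,\dots,\tau_{n-1}\rangle$ onto $\mathbf{S}_n$.

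The heart of the matter is to verify that the $\tau_i$ satisfy the Coxeter relations of $\mathbf{S}_n$, so that $s_i\mapsto\tau_i$ defines a homomorphism $\phi\colon\mathbf{S}_n\to\mathbf{B}_n(X)$. The commutations $\tau_i\tau_j=\tau_j\tau_i$ for $|i-j|\ge 2$ are clear, as those swaps have disjoint supports, and the braid relation $\tau_i\tau_{i+1}\tau_i=\tau_{i+1}\tau_i\tau_{i+1}$ is a local three-point computation inside $D\cup e$. The crucial and most delicate relation is $\tau_i^2=1$, and this is exactly where the hypothesis $S_0\subset X$ (rather than mere embeddability into a surface) is indispensable: because point $i$ leaves the disk by travelling up the \emph{free} edge $e$ while point $i{+}1$ crosses, the swap $\tau_i$ carries no half-twist, so performing it twice lets the two spike excursions together with the return motion of point $i{+}1$ bound a disk in the configuration space. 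By contrast, the pure-disk half-twist $\sigma_i$ has $\sigma_i^2$ equal to the nontrivial full twist (its square is the nontrivial element encircling a fixed point, which survives in $S_0\setminus\{\mathrm{pt}\}\simeq S^1$), so $\tau_i^2=1$ cannot be witnessed by any planar homotopy. I expect the main obstacle to be making this null-homotopy rigorous: tracking the two interleaved trajectories through the phases of $\tau_i^2$ and exhibiting the contraction that slides the encircling loop off the free end $w$ of the spike.

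Granting the relations, $\phi\colon\mathbf{S}_n\to\mathbf{B}_n(X)$ is a well-defined homomorphism with $\pi\circ\phi=\mathrm{id}_{\mathbf{S}_n}$, since $\phi(s_i)=\tau_i$ and $\pi(\tau_i)=s_i$. Hence $\phi$ is injective and splits $\pi$, exhibiting $\mathbf{S}_n$ as a subgroup of $\mathbf{B}_n(X)$ for every $n\ge 1$, as required.
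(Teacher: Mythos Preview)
Your proposal addresses only the ``Moreover'' clause and says nothing about parts (1)--(3) of the theorem. Those parts are the bulk of the work: part (1) is a short direct argument, but (2) and (3) require the full machinery developed in the paper---reducing to a simple $2$-complex (Theorem~\ref{thm:simple}), decomposing into elementary pieces (Lemma~\ref{lem:elementary}), and then using the combination theorems for closures and connected sums (Theorems~\ref{thm:closure} and \ref{thm:connectedsum}, Lemmas~\ref{lem:torsioncombination}, \ref{lem:exactseq1}, \ref{lem:exactseq2}, and Proposition~\ref{prop:3conn}) to propagate torsion-freeness of $\mathbf{B}_n$ and of $H_1(\mathbf{B}_n)$ through the build-up. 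None of this is touched in your write-up, so as a proof of the stated theorem it is seriously incomplete.

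For the ``Moreover'' clause itself, your strategy is correct in spirit and is a hands-on version of what the paper does, but the paper's execution is cleaner and sidesteps exactly the obstacle you flag. Rather than verify the Coxeter relations for your spike swaps $\tau_i$ directly---in particular rather than construct the null-homotopy witnessing $\tau_i^2=1$, which you rightly identify as the delicate point---the paper proves once and for all that the induced permutation $\rho\colon\mathbf{B}_n(S_0)\to\mathbf{S}_n$ is an \emph{isomorphism}: $S_0$ is an elementary branched surface whose regular neighborhood in $\mathbb{R}^3$ is a $3$-ball, and the inclusion $S_0\hookrightarrow D^3$ is a braid equivalence (Proposition~\ref{prop:highercell}, Corollary~\ref{cor:2cell}), so $\mathbf{B}_n(S_0)\cong\mathbf{B}_n(D^3)\cong\mathbf{S}_n$ by Birman's theorem. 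The embedding $\mathbf{S}_n\hookrightarrow\mathbf{B}_n(X)$ is then simply $i_*\circ\rho^{-1}$ for $i\colon S_0\hookrightarrow X$, and the splitting identity follows from naturality of the permutation map (Corollary~\ref{cor:torsion}). Your $\tau_i$ are precisely the images of the Coxeter generators under this composite, so all the relations you want hold automatically; the thickening-to-$D^3$ argument is what makes $\tau_i^2=1$ rigorous without ever writing down an explicit homotopy in $B_n(S_0)$.
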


Remark that we exclude the cases for $S^2$ and $\mathbb{R}P^2$ since their braid groups have torsion even though they are braid groups on {\em surfaces}, 2-dimensional manifolds.
However, by the complementary statement, they are classified as $\mathbf{B}_n(X)$ contains a torsion but not the whole $\mathbf{S}_n$ for any $n\ge 3$.

The rest of this paper is organized as follows. In Section~2, we define braid groups on complexes and basic notions. In Section~3, we define the modifications and prove Theorem~\ref{thm:simple}, and in Section~4 we define two operations, called {\em unwrapping} and {\em connected-sum decomposition}, and look at the shapes of the elementary complexes.
The effects of the inverses, called {\em closure} and {\em connected sum}, of these two operations on braid groups will be discussed separately in Section~5 and Section~6, which they let us know how the build-up process is working.
Finally, in Section 7, as applications we prove the criteria, Theorem~\ref{thm:emb}, for the embeddability of given complex $X$ into a surface and a plane.

\section{Braid groups on complexes}
Throughout this paper, a {\em complex} denoted by $X$ means a finite, connected, simplicial complex of dimension at least 1.
Especially, a complex of dimension 1 is usually denoted by $\Gamma$ and called a {\em graph}.
Since the braid group on $X$ depends only on the homeomorphism type of $X$, we sometimes assume that $X$ is {\em sufficiently subdivided}, which can be achieved via the barycentric subdivision twice.
The {\em star} $\st(K)$ is the union of all open simplices whose closure intersects $K$, and the {\em link} $\lk(K)$ of $K$ is the complement of the star $\st(\overline K)$ of the closure $\overline K$ in its closure $\overline{\st(K)}$. That is, $\lk(K)=\overline{\st(K)}\setminus \st(\overline{K})$, as usual.

Note that both $F_n(X)$ and $B_n(X)$ can be regarded as finite simplicial complexes up to homotopy as follows.
Since $X$ is a finite simplicial complex, so is the $n$-fold product $X^n$, and after barycentric subdivisions if necessary, the diagonal $\Delta$ becomes a simplicial subcomplex of $X^n$.
Hence the further subdivision makes $X^n\setminus \st(\Delta)$ a strong deformation retract of $X^n\setminus \Delta=F_n(X)$. Therefore if we endow a metric $d$ on $X$, we may assume that there exists a constant $\epsilon=\epsilon(X)>0$ such that any two points of $\mathbf{x}\in F_n(X)$ never approach within $\epsilon$ of each other with respect to the metric $d$, and the same holds for $B_n(X)$.

From the definitions of configuration spaces, we have the following exact sequence.
\begin{equation}\label{eq:bs}
1\longrightarrow \mathbf{P}_n(X,\bar*_n)\longrightarrow \mathbf{B}_n(X,*_n)\stackrel{\rho}{\longrightarrow}\mathbf{S}(*_n)
\end{equation}
Here the group $\mathbf{S}(*_n)$ is the symmetric group on the set $*_n$, usually denoted by $\mathbf{S}_n$, and the map $\rho$ is called the {\em induced permutation}.
It is easy to see that $\mathbf{P}_n(I)=\mathbf{B}_n(I)=\{e\}$, and $\mathbf{P}_n(S^1)=n\mathbb{Z}\subset\mathbb{Z}=\mathbf{B}_n(S^1)$.

On the other hand, it is known that $\rho$ for $\mathbf{B}_n(T_3)$ on a tripod $T_3$ is surjective for each $n\ge 2$. Hence whenever $X$ contains $T_3$, then $\rho$ is surjective as well. 

\begin{prop}
Let $X$ be a complex. Then $X$ embeds into a circle if and only if $\mathbf{B}_n(X)$ is abelian for any $n\ge 1$.
\end{prop}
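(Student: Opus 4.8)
The plan is to reduce everything to the embeddability criterion (i) recorded above, that $X$ embeds into a circle if and only if $T_3\not\subset X$, together with the observation that the induced permutation $\rho\colon\mathbf{B}_n(X)\to\mathbf{S}_n$ is surjective as soon as $T_3\subset X$. With these two inputs the proposition splits into two short implications, and the only place requiring genuine care is the topological classification underlying the forward direction.

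First I would treat the forward direction. Suppose $X$ embeds into a circle. Since $X$ is a finite, connected complex of dimension at least $1$ and, after sufficient subdivision, embeds simplicially into $S^1$, its image is a connected compact subcomplex of $S^1$; such a subcomplex is either a closed arc or all of $S^1$, so $X$ is homeomorphic to $I$ or to $S^1$. As the braid group depends only on the homeomorphism type, I may invoke the computations already recorded, namely $\mathbf{P}_n(I)=\mathbf{B}_n(I)=\{e\}$ and $\mathbf{B}_n(S^1)=\mathbb{Z}$. In either case $\mathbf{B}_n(X)$ is abelian for every $n\ge 1$, which is the desired conclusion.

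For the converse I would argue by contraposition: assuming $X$ does not embed into a circle, I must produce a single $n$ for which $\mathbf{B}_n(X)$ is non-abelian. By criterion (i), non-embeddability means $T_3\subset X$, so $\rho\colon\mathbf{B}_n(X)\to\mathbf{S}_n$ is surjective for every $n\ge 2$. Taking $n=3$, the group $\mathbf{B}_3(X)$ surjects onto $\mathbf{S}_3$, which is non-abelian; since a group surjecting onto a non-abelian group cannot itself be abelian, $\mathbf{B}_3(X)$ is non-abelian. Hence ``$\mathbf{B}_n(X)$ abelian for all $n\ge 1$'' fails, completing the contrapositive.

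The genuinely substantive step is the classification in the forward direction---verifying that a connected finite complex embedding into $S^1$ is homeomorphic to an arc or a circle and nothing more exotic; this is where the standing convention that $X$ is sufficiently subdivided, so that the embedding is honestly simplicial, does the work. The backward direction is then essentially formal once one chooses $n=3$ rather than $n=2$, the point being that $\mathbf{S}_2$ is abelian while $\mathbf{S}_3$ is not, so the tripod only becomes visible in the braid group at three strands.
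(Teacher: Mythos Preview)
Your proof is correct and follows essentially the same approach as the paper. The paper dismisses the forward direction as ``obvious'' where you unpack it via the classification of connected subcomplexes of $S^1$ and the explicit computations of $\mathbf{B}_n(I)$ and $\mathbf{B}_n(S^1)$; for the backward direction the paper argues directly (abelian $\Rightarrow$ $\rho$ not surjective for $n\ge 3$ $\Rightarrow$ $T_3\not\subset X$) while you run the contrapositive, but the same three ingredients---the $T_3$ criterion, surjectivity of $\rho$ when $T_3\subset X$, and non-abelianness of $\mathbf{S}_3$---do the work in both.
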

\begin{proof}
The only if part is obvious.

Suppose $\mathbf{B}_n(X)$ is abelian. Then since $\mathbf{S}_n$ is non-abelian for any $n\ge3$, $\rho$ never be surjective. Hence $X$ is either $I$ or $S^1$, and therefore it embeds into a circle.
\end{proof}
 
We call $X$ {\em trivial} if $X$ is either $I$ or $S^1$. Then $T_3$ can be thought as the obstruction complex for given complex to be trivial.
From now on we assume that $X$ is non-trivial.

\begin{defn}
For any $x\in X$, there is a trichotemy as follows.
\begin{enumerate}
\item $x$ is in the {\em interior $\mathring{X}$} if $\lk(x)\simeq S^k$ for some $k\ge 0$;
\item $x$ is in the {\em boundary $\partial X$} if $\lk(x)\simeq D^k$ for some $k\ge 0$;
\item $x$ is in the {\em branch set $\br(X)$} of $X$ otherwise.
\end{enumerate}
\end{defn}

\begin{defn}
Let $X$ be a complex.
\begin{enumerate}
\item A 0-cell $v$ is called a {\em vertex}, whose {\em valency $\val(v)$} is defined by the number of connected components of $\lk(v)$.
\item
A 1-cell $e=(v,w)$ is called an {\em edge} if there is no 2-cell containing $e$ in its boundary.
\item
For a subset $K$ of $X$, a {\em deletion $X_K$ of $K$} in $X$ is defined by the complement $X\setminus \st(K)$ of $\st(K)$. 
\end{enumerate}
\end{defn}

\begin{figure}[ht]
\def\svgwidth{.95 \textwidth}
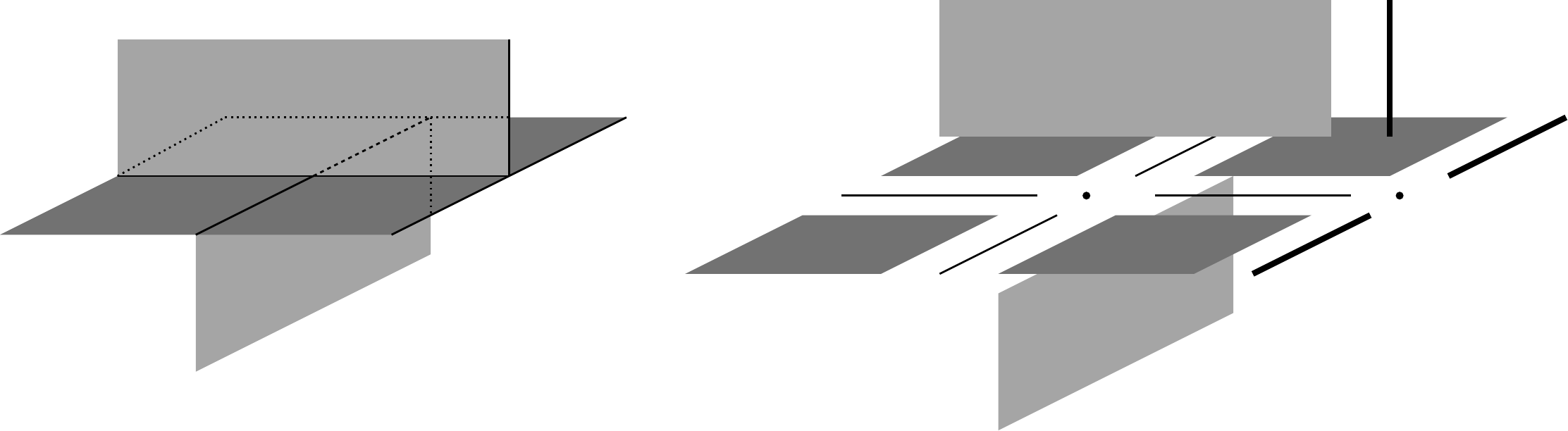
\caption{A decomposition of $X$ into sets of interior, boundary, and branch points}
\label{fig:IntBoundaryBranch}
\end{figure}

The Figure~\ref{fig:IntBoundaryBranch} shows an example.
The thin lines and dots are in $\br(X)$, and the thick lines are in $\partial X$.
Note that $\br(X)$ is a closed subcomplex of $X$, and $X$ is a manifold if and only if $\br(X)=\emptyset$.

\begin{thm}\cite{Bir}
Let $M$ be a manifold of dimension at least $3$, not necessarily compact and possibly with boundary. Then the pure and full braid groups are as follows.
\[\mathbf{P}_n(M)=\prod^n \pi_1(M),\qquad
\mathbf{B}_n(M)=\prod^n\pi_1(M)\rtimes\mathbf{S}_n,\]
where the symmetric group $\mathbf{S}_n$ acts on the product $\prod^n\pi_1(M)$ by permuting factors.
\end{thm}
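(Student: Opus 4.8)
The plan is to reduce the whole statement to one topological fact—that deleting the diagonal $\Delta$ from $M^n$ does not change the fundamental group when $\dim M\ge 3$—and then to bootstrap the full braid group from the pure one via the covering $F_n(M)\to B_n(M)$.

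First, for the pure braid group I would study the inclusion $\iota\colon F_n(M)=M^n\setminus\Delta\hookrightarrow M^n$. After triangulating $M^n$ so that $\Delta$ is a subcomplex (as arranged in Section~2), the top stratum of $\Delta$ is the locus where exactly one pair of coordinates agree, of dimension $(n-1)\dim M$; hence $\codim\Delta=\dim M\ge 3$ everywhere. A general position argument then shows $\iota_*$ is an isomorphism on $\pi_1$: every loop, being the image of a $1$-complex, can be perturbed off a subset of codimension $\ge 2$, which gives surjectivity; and every homotopy between such loops, being the image of a $2$-complex, can be perturbed off a subset of codimension $\ge 3$, which gives injectivity. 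Combined with $\pi_1(M^n)=\prod^n\pi_1(M)$, this yields $\mathbf{P}_n(M)=\prod^n\pi_1(M)$, where the $i$-th factor is carried by loops that move only the $i$-th point.

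Next, for the full braid group I would use that $\mathbf{S}_n$ acts freely on $F_n(M)$ with quotient $B_n(M)$, so that \eqref{eq:bs} extends to a short exact sequence
\[1\longrightarrow\mathbf{P}_n(M)\longrightarrow\mathbf{B}_n(M)\stackrel{\rho}{\longrightarrow}\mathbf{S}_n\longrightarrow 1,\]
the surjectivity of $\rho$ being clear since a connected manifold of dimension $\ge 2$ admits braids realizing every permutation. To split this sequence I would choose an embedded $d$-ball $D\subset M$ with $d=\dim M$. The same codimension argument applied to $\mathbb{R}^d$ shows $F_n(\mathbb{R}^d)$ is simply connected, so $\mathbf{P}_n(D)=\{e\}$ and $\mathbf{B}_n(D)\cong\mathbf{S}_n$; the inclusion $D\hookrightarrow M$ then furnishes a section $s\colon\mathbf{S}_n\to\mathbf{B}_n(M)$ of $\rho$, so that $\mathbf{B}_n(M)=\mathbf{P}_n(M)\rtimes\mathbf{S}_n$. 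It remains to identify the resulting conjugation action of $\mathbf{S}_n$ on $\prod^n\pi_1(M)$ with the permutation of factors: since $s$ is supported inside the small ball $D$, a braid $s(\sigma)$ moves points along short paths in $D$ without interacting with a loop $\gamma$ carried far from $D$ in the $i$-th coordinate, so conjugating such a loop by $s(\sigma)$ reproduces $\gamma$ now traversed by the $\sigma(i)$-th point, landing it in the $\sigma(i)$-th factor.

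I expect the main obstacle to be the general position step: one must treat $\Delta$ as a stratified set rather than a smooth submanifold and check that both the push-off of loops and the push-off of their homotopies stay within the stated codimension bounds. This is precisely the step that breaks in dimension $2$, where $\codim\Delta=2$ suffices only for surjectivity and genuine braiding relations survive. By contrast the splitting and the identification of the action are geometrically transparent, needing only that the section be localized in a ball disjoint from the support of the pure loops.
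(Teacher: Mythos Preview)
The paper does not give its own proof of this theorem: it is stated with a citation to Birman \cite{Bir} and then used as a black box (most notably in Lemma~\ref{lem:elementary}). So there is no in-paper argument to compare against.

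That said, your outline is correct and is essentially the standard proof. The codimension count $\codim(\Delta\subset M^n)=\dim M\ge 3$ is right, and the two-step transversality argument (push $1$-complexes off codimension~$\ge 2$ for surjectivity, push $2$-complexes off codimension~$\ge 3$ for injectivity) is exactly the mechanism; it is also the same idea the paper itself deploys repeatedly in Section~3 (e.g.\ Proposition~\ref{prop:highercell}, where the failure locus $B_n^{r\text{-fail}}$ is shown to have codimension $k-1$ and loops/disks are pushed off accordingly). Your splitting via a small ball $D\subset M$ is fine: the same argument gives $\mathbf{P}_n(D)\cong\pi_1(D^n)=1$, hence $\mathbf{B}_n(D)\cong\mathbf{S}_n$, and the inclusion-induced map is a section of $\rho$. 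The identification of the conjugation action with permutation of factors is also correct, though to make it airtight you should fix a common basepoint $\bar*_n\subset D$ and observe that a loop in the $i$-th factor can be taken to keep all other coordinates at their basepoint values inside $D$; then $s(\sigma)$ literally relabels coordinates. The only place to be careful, as you note, is that $\Delta$ is stratified rather than a submanifold, but every stratum has codimension at least $\dim M$, so the transversality argument goes through stratum by stratum.
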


Hence there is no braid theory for manifolds of dimension 3 or higher.
On the other hand, for a surface $\Sigma$, then there is a fiber bundle structures between the ordered configuration spaces $F_n(\Sigma)$'s which can be used to compute and analyze braid groups on $\Sigma$. 
Note that since compact surfaces are completely characterized by a few parameters, so are their braid groups. 
Indeed, for a given surface $\Sigma$, one can extract geometric information from its braid group as follows.
The proof is obvious by the group presentation for $\mathbf{B}_n(\Sigma)$, see \cite{Bel}, and we omit the proof.

\begin{thm}\cite{Bel, Bir, GG}\label{thm:surface}
Let $\Sigma$ be a surface. Then the following holds.
\begin{enumerate}
\item $\mathbf{B}_n(\Sigma)$ has torsion if and only if $\Sigma$ is either $S^2$ or $\mathbb{R}P^2$.
\item The abelianization $H_1(\mathbf{B}_n(\Sigma))$ has torsion if and only if $\Sigma$ is nonplanar.
\end{enumerate}
\end{thm}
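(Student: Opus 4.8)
The plan is to reduce everything to the explicit finite presentations of the surface braid groups $\mathbf{B}_n(\Sigma)$ from \cite{Bel, GG}, organized by the four families (orientable or nonorientable, with or without boundary) together with the two exceptional closed surfaces $S^2$ and $\mathbb{R}P^2$. In each presentation the generators are the band generators $\sigma_1,\dots,\sigma_{n-1}$ together with \emph{surface generators} obtained by pushing the first strand around a generating loop of $\pi_1(\Sigma)$; the relations fall into the braid relations among the $\sigma_i$, the mixed relations recording how a surface generator commutes (or fails to commute) with a $\sigma_i$, and one distinguished \emph{surface relation} coming from the defining relator of $\pi_1(\Sigma)$. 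Having these presentations in hand, I would treat the two items by quite different routes.

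For item (1) I would prove the two directions by different means. The ``if'' direction only requires exhibiting a single finite-order element: in $\mathbf{B}_n(S^2)$ the element $\delta=\sigma_1\cdots\sigma_{n-1}$ satisfies $\delta^{n}=\Delta^2$ with $\Delta^2$ of order $2$, so $\delta$ has finite order, and an analogous torsion element exists in $\mathbf{B}_n(\mathbb{R}P^2)$ (see \cite{GG}). For the ``only if'' direction, i.e.\ torsion-freeness whenever $\Sigma\neq S^2,\mathbb{R}P^2$, I would drop the presentation and argue geometrically: the Fadell--Neuwirth projections present $F_n(\Sigma)$ as an iterated fibration whose base $\Sigma$ and whose fibers (punctured surfaces) are all aspherical \emph{exactly because} $\Sigma$ is neither $S^2$ nor $\mathbb{R}P^2$. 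Hence $F_n(\Sigma)$, and then $B_n(\Sigma)=F_n(\Sigma)/\mathbf{S}_n$, is a finite-dimensional $K(\pi,1)$. A group that admits a finite-dimensional classifying space has finite cohomological dimension and is therefore torsion-free, since a nontrivial finite cyclic subgroup would force cohomology in arbitrarily high degree.

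For item (2) I would pass to abelianizations of the same presentations. Abelianizing collapses all $\sigma_i$ to a single class $t$, kills every mixed relation (they are commutators), and turns the braid relations into tautologies, so that $H_1(\mathbf{B}_n(\Sigma))$ is the quotient of $\mathbb{Z}\langle t\rangle\oplus H_1(\Sigma)$ by the image of the surface relation. The decisive computation is this image: upon capping, the surface relation becomes $\prod_i[a_i,b_i]=1$ in the orientable case and $\prod_i c_i^2=1$ in the nonorientable case, and tracking the associated band word shows that it abelianizes to an equation of the form $k\,t=(\text{a class in }H_1(\Sigma))$, with $k=2n-2$ in the orientable case and a relation forcing $2$-torsion in the nonorientable case. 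One then reads off that genuine torsion appears precisely when the surface-generator side of this relation is itself trivial or torsion, so that $t$ is pinned to finite order; for a genuinely planar surface no such relation is forced and $H_1(\mathbf{B}_n(\Sigma))$ comes out free abelian.

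The main obstacle I anticipate is not any single computation but the uniform bookkeeping in item (2): one must verify that, after abelianization, the coefficient vector of the distinguished surface relation is primitive exactly for the planar surfaces and imprimitive (or $2$-torsion producing) exactly for the nonplanar ones, and this is most delicate for surfaces carrying both boundary and handles/crosscaps, where one has to check precisely which boundary classes remain free and which get consumed by the full-twist relation. The exceptional spheres $S^2$ and $\mathbb{R}P^2$ must then be handled by hand, since the absence of surface generators makes the relation collapse $t$ directly to finite order. Aligning the resulting torsion dichotomy with the purely topological dichotomy ``embeds in the plane versus not'' is the step that requires genuine care.
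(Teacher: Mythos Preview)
Your plan is sound and, in fact, more detailed than the paper's own treatment: the paper simply states that the theorem is ``obvious by the group presentation for $\mathbf{B}_n(\Sigma)$'' and omits the proof entirely. So your strategy of abelianizing the Bellingeri/Gon\c{c}alves--Guaschi presentations for item~(2), and of exhibiting explicit torsion for item~(1), is exactly what the paper has in mind.

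Two remarks are worth making. First, for the ``only if'' half of~(1) your Fadell--Neuwirth argument is the right one and is genuinely needed: torsion-freeness is not something one reads off a presentation, so the paper's ``obvious from the presentation'' is a bit glib there. Your route through asphericity of $B_n(\Sigma)$ and finite cohomological dimension is the standard and correct argument, and the paper itself gestures toward it just before the theorem when it mentions the fiber-bundle structure on the $F_n(\Sigma)$.

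Second, a bookkeeping correction for~(2): it is not true that \emph{every} mixed relation abelianizes trivially. In Bellingeri's presentation for a closed orientable surface of genus $g\ge 1$, beyond the relations that are honest commutators there is a relation of the form $[a_r,\sigma_1^{-1}b_r^{-1}\sigma_1^{-1}]=\sigma_1^{-2}$ (one for each handle), and in the nonorientable case there is an analogous crosscap relation. These abelianize to $2t=0$, and it is \emph{this}, not the surface relation alone, that forces the $\mathbb{Z}/2$ on $[\sigma]$; the surface relation for closed orientable $\Sigma_g$ only yields $(2n-2)t=0$, which is implied by $2t=0$. For surfaces with boundary and positive genus or crosscaps there is no surface relation at all, so the torsion must come from these mixed relations. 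Once you track this correctly, your case split (planar: no such relation, $t$ free; nonplanar: $2t=0$) goes through exactly as you outlined.
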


On the contrary, if $X$ is not a manifold, the global topology for a complex $X$ can hardly be determined by a few parameters in general, even if $X$ is 1-dimensional.
Therefore one might not expect that similar results hold for $X$, but surprisingly, the braid group still detects some of the global geometry of the complex $X$ when $X$ is a graph $\Gamma$ as follows.
\begin{thm}\cite{KKP, Gh}
Let $\Gamma$ be a graph. Then $\mathbf{B}_n(\Gamma)$ is {\em always} torsion free, and moreover $H_1(\mathbf{B}_n(\Gamma))$ has torsion if and only if $\Gamma$ is nonplanar.
\end{thm}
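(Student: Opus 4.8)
The plan is to replace the topological space $B_n(\Gamma)$ by Abrams' discrete configuration space $UD_n(\Gamma)$, whose cells record collections of mutually non-adjacent edges and vertices occupied by the $n$ points. After subdividing $\Gamma$ sufficiently, Abrams' theorem \cite{Ab} gives a homotopy equivalence $UD_n(\Gamma)\simeq B_n(\Gamma)$, and $UD_n(\Gamma)$ carries a natural cube complex structure. Both assertions then become statements about this cube complex: the torsion-freeness of $\mathbf{B}_n(\Gamma)=\pi_1(UD_n(\Gamma))$, and the presence of torsion in $H_1(UD_n(\Gamma))$.

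For torsion-freeness I would verify Gromov's link condition. The link of a vertex of $UD_n(\Gamma)$ has one vertex for each direction a single point may move, and a set of such directions spans a simplex exactly when the corresponding edges can be occupied simultaneously. Because $\Gamma$ is $1$-dimensional, pairwise compatibility of directions forces joint compatibility, so each link is a flag complex and $UD_n(\Gamma)$ is nonpositively curved. Its universal cover is then a complete $\mathrm{CAT}(0)$ space on which $\mathbf{B}_n(\Gamma)$ acts freely, properly, and cocompactly by deck transformations. Any element of finite order would generate a finite subgroup with bounded orbits, which by the center-of-mass fixed-point theorem for $\mathrm{CAT}(0)$ spaces must fix a point, contradicting freeness of the deck action. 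Hence $\mathbf{B}_n(\Gamma)$ is torsion-free for every graph $\Gamma$ and every $n$.

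For the planarity criterion I would pass to an explicit presentation via Farley--Sabalka discrete Morse theory \cite{FS, For} applied to $UD_n(\Gamma)$: a discrete gradient field collapses the complex onto its critical cells, so that critical $1$-cells give generators, critical $2$-cells give relations, and $H_1(\mathbf{B}_n(\Gamma))$ is the cokernel of the abelianized relation matrix. For the nonplanar direction I would invoke Kuratowski's theorem, so that a nonplanar $\Gamma$ contains a subdivision of $K_5$ or $K_{3,3}$. Two inputs then finish this direction: first, a monotonicity lemma asserting that the inclusion of a subgraph induces an injection on the relevant summand of $H_1$ (realized by pushing the surplus points out of the way and retracting onto a neighborhood of the subgraph); and second, the explicit computations showing that $H_1(\mathbf{B}_n(K_5))$ and $H_1(\mathbf{B}_n(K_{3,3}))$ each carry $2$-torsion \cite{KKP}. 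Combining them produces torsion in $H_1(\mathbf{B}_n(\Gamma))$ whenever $\Gamma$ is nonplanar.

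The hard part will be the converse: that planarity forces $H_1(\mathbf{B}_n(\Gamma))$ to be torsion-free. Here I would fix a planar embedding and exploit the cyclic ordering of edges it induces around each vertex. Every abelianized Morse relation is supported near a single essential vertex, and the idea is that the rotation system lets one choose the gradient field coherently so that each relation expresses one critical generator in terms of others with a unit coefficient, bringing the relation matrix to a form all of whose nonzero invariant factors equal $1$. The delicate step is the control of signs and coefficients in this reduction: one must show that the planar rotation system rules out precisely the ``odd'' linking configurations responsible for the $2$-torsion in the $K_5$ and $K_{3,3}$ cases, so that no nontrivial invariant factor survives in the Smith normal form. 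Verifying that this local bookkeeping is globally consistent across all essential vertices at once is where the main work lies.
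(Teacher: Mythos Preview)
The paper does not give its own proof of this theorem: it is stated with citation to \cite{KKP, Gh} and serves as motivation for the generalization in Theorem~\ref{thm:emb}. Your proposal reconstructs the approach of those cited sources---Abrams' cubical model, the $\mathrm{CAT}(0)$ argument for torsion-freeness (due to Abrams/Ghrist/Crisp--Wiest), and the Farley--Sabalka Morse-theoretic computation of $H_1$ carried out in \cite{KKP}---and is essentially correct as a sketch. The one place to be careful is your ``monotonicity lemma'': a subgraph inclusion need not induce an injection on $H_1$ of braid groups in general; what actually survives is the image under $\bar\rho:H_1(\mathbf{B}_n(\Gamma))\to H_1(\mathbf{S}_n)\simeq\mathbb{Z}_2$, and that is enough because the $2$-torsion in $H_1(\mathbf{B}_n(K_5))$ and $H_1(\mathbf{B}_n(K_{3,3}))$ hits the generator of $\mathbb{Z}_2$ (this is exactly how the present paper argues in Corollary~\ref{cor:nonplanargraph}).

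It is worth noting that the paper's machinery for Theorem~\ref{thm:emb} yields a genuinely different proof of the graph case. Rather than analyzing a single cube complex $UD_n(\Gamma)$ globally, the paper decomposes $\Gamma$ via $1$- and $2$-cuts into elementary pieces (homeomorphic to $T_k$), whose braid groups are free, and reassembles using $k$-closures and $k$-connected sums. Torsion-freeness then follows from Lemma~\ref{lem:torsioncombination} (HNN extensions and amalgamated free products preserve torsion-freeness), and the $H_1$ computation from the exact sequences of Lemmas~\ref{lem:exactseq1}--\ref{lem:exactseq2} together with the vertex-$3$-connected analysis in Proposition~\ref{prop:3conn}. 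Your approach gives a direct, self-contained treatment of graphs via cube-complex geometry; the paper's approach trades that for a Bass--Serre-style induction that generalizes uniformly to higher-dimensional complexes, where no cubical model is available.
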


Hence Theorem~\ref{thm:emb} is the generalization of the two theorems above, and to prove our theorem, we adopt the notion from graph theory which is the mimic of the {\em minor} relation, that is, edge contraction and deletion. Note that the minor relations reduce the number of edges and so the result is usually considered as simpler than the original one. However, they may increase valencies, and the higher valency tends to imply a more complicated situation in the computation of the braid group. Therefore we may think that a complex having lower valencies is simpler.

Rigorously speaking, we define a simple complex as follows.
\begin{defn}
A vertex $v$ is said to be {\em simple} if $\lk(v)$ is either connected, a disjoint union of a connected complex and a point, or 0-dimensional.
A complex $X$ is said to be {\em simple} if all vertices in $X$ are simple.
\end{defn}

Hence a simple complex is really easy to handle, but is too special and far from the generic ones.
However, we claim that any complex can be transformed into a simple complex by a sequence of certain modifications such as attaching and removing (higher) cells, where each step induces an isomorphism between braid groups.

For convenience's sake, we say that an embedding $f:X\to Y$ is a {\em braid equivalence} if it induces an isomorphism $f_*:\mathbf{B}_n(X)\to\mathbf{B}_n(Y)$ for each $n\ge 1$, respectively.
Moreover, we simply say that $X$ and $Y$ are {\em braid equivalent} if they can be joined by a sequence of (possibly {\em inverse} of) braid equivalences, denoted by $X\equiv_B Y$, respectively.

Then the above claim can be reformulated as for any $X$, there is a simple representative in the braid equivalence class of $X$ as presented in Theorem~\ref{thm:simple}. We will prove this proposition later.

\subsection{Appending a point}
Let $v\in\partial X$, and $i_v:B_{n-1}(X\setminus\{v\})\to B_n(X)$ for $n\ge 1$
be an embedding defined as
\[i_v(\mathbf{x})=\{v\}\cup \mathbf{x}\]
for $\mathbf{x}\in B_{n-1}(X\setminus\{v\})$. 
Note that $i_v(\emptyset) = \{v\}$ if $n=1$.

Then it induces a homomorphism 
\[(i_v)_*:\mathbf{B}_{n-1}(X\setminus \{v\},*_{n-1})\to\mathbf{B}_n(X,i_v(*_{n-1})),\]
where $*_{n-1}$ is a basepoint for $B_{n-1}(X\setminus\{v\})$.

Note that $B_n(X\setminus \{v\})$ is homotopy equivalent to $B_n(X)$ via the inclusion, whose homotopy inverse is a map $h$ resizing cells incident to $v$.
Hence we can consider a composition
\[\bar i_v:B_{n-1}(X)\stackrel{h}\longrightarrow B_{n-1}(X\setminus\{v\})\stackrel{i_v}\longrightarrow B_n(X),\]
which induces
\[(\bar i_v)_*:\mathbf{B}_{n-1}(X,*_{n-1})\to\mathbf{B}_n(X,\bar i_v(*_{n-1})).\]
We can use safely $i_v$ and $(i_v)_*$ instead of $\bar i_v$ and $(\bar i_v)_*$ since there is no ambiguity up to homotopy.

\begin{prop}\label{prop:injection}
Let $X$ be a complex and $v\in\partial X$.
Then the homomorphism $(i_v)_*:\mathbf{B}_{n-1}(X)\to\mathbf{B}_n(X)$ is injective for all $n\ge1$.
\end{prop}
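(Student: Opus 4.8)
The plan is to show that $(i_v)_*$ is a \emph{split} injection by producing a homotopy left inverse to $i_v$, exploiting two consequences of the hypothesis $v\in\partial X$. First, a neighborhood of $v$ is a half-disk $\overline{\st(v)}\cong C(\lk(v))=C(D^k)$, whose cone structure supplies a free outward direction along which points can be pushed off $v$. Second, the inclusion-induced homotopy equivalence $h\colon B_n(X)\xrightarrow{\simeq}B_n(X\setminus\{v\})$ recorded above is available precisely because deleting a boundary point does not change the homotopy type. If a map $r\colon B_n(X)\to B_{n-1}(X\setminus\{v\})$ can be arranged so that $r\circ i_v\simeq\mathrm{id}$, then $r_*\circ(i_v)_*=\mathrm{id}$ and injectivity follows formally.

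To build such an $r$, I would fix nested half-disk neighborhoods $v\in N_0\subset N$ and single out the locus
\[Z=\{\mathbf{x}\in B_n(X)\mid \#(\mathbf{x}\cap N_0)=1,\ \mathbf{x}\setminus N_0\subset X\setminus N\}.\]
Applying the resizing homotopy $h$ to the configuration $i_v(\mathbf{x})=\{v\}\cup\mathbf{x}$ pushes the $n-1$ points of $\mathbf{x}$ off the cells incident to $v$, that is, out of $N$, while the added point stays at $v\in N_0$; thus $i_v$ factors, up to homotopy, through $Z$ with the added point being the unique point in $N_0$. On $Z$ the rule $r(\mathbf{x})=\mathbf{x}\setminus N_0$ is continuous and lands in $B_{n-1}(X\setminus N)\subset B_{n-1}(X\setminus\{v\})$, and by construction $r\circ i_v$ returns $\mathbf{x}$ up to the homotopy $h\simeq\mathrm{id}$, giving $r\circ i_v\simeq\mathrm{id}_{B_{n-1}(X\setminus\{v\})}$.

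The main obstacle is that $r$ is only naturally defined on $Z$: a general configuration may place no point, or two or more points, in the guard region $N_0$, and there is no continuous global rule selecting a point to forget. I therefore expect to run the argument directly on $\pi_1$ rather than globally retract. Given $[\gamma]\in\ker(i_v)_*$, choose a null-homotopy $G\colon(D^2,\partial D^2)\to(B_n(X),\,i_v\circ\gamma)$; since $D^2$ is simply connected the pulled-back $n$-fold cover trivializes, so the strands of $G$ are coherently labeled, the added strand $g$ satisfies $g|_{\partial D^2}\equiv v$, and forgetting $g$ yields $G'\colon D^2\to B_{n-1}(X)$ with $G'|_{\partial D^2}=\gamma$. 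It then suffices to homotope $G$ rel $\partial D^2$ into $Z$-type configurations, holding $g$ at $v$ while sweeping the remaining strands out of $N$; after this the forgotten configuration $G'$ avoids $v$ everywhere and so factors through $B_{n-1}(X\setminus\{v\})$, exhibiting $\gamma$ as null-homotopic there.

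The delicate, and I expect hardest, step is carrying out this sweep continuously over the whole disk while keeping all strands distinct: the non-special strands must be flowed off a neighborhood of $v$ and the special strand contracted to $v$ without collisions. This is exactly where $v\in\partial X$ is essential, since the cone direction of $C(\lk(v))$ gives an unobstructed outward push with no ``far side'' to trap strands, whereas an interior vertex, whose link is a sphere, would offer no such escape. I would formalize the sweep as the time-one map of a fixed outward radial flow on $N$ fixing $v$, applied fiberwise over $D^2$ and iterated (using the $\epsilon$-separation and compactness of $D^2$) until every non-special strand exits $N$, preceded by a general-position move pushing those strands off the single point $v$ itself.
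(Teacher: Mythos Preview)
Your approach is correct and is essentially the same as the paper's: lift through the $n$-fold cover that distinguishes one strand, then apply the global forgetful map. The paper phrases this more cleanly by introducing the partially ordered configuration space $B_{1,n-1}(X)=F_n(X)/\mathbf{S}_{n-1}$, noting that $i_v$ lifts to $\tilde i_v(\mathbf{x})=(v,\mathbf{x})$ through the covering $p\colon B_{1,n-1}(X)\to B_n(X)$, and that the globally defined projection $\pi\colon B_{1,n-1}(X)\to B_{n-1}(X)$ satisfies $\pi\circ\tilde i_v\simeq\mathrm{id}$; injectivity of $(i_v)_*=p_*\circ(\tilde i_v)_*$ then follows from injectivity of $p_*$ (covering map) and of $(\tilde i_v)_*$ (split mono).

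Your second paragraph is exactly this argument, carried out by hand on a single null-homotopy disk: trivializing the pulled-back cover over $D^2$ is the same as lifting $G$ to $B_{1,n-1}(X)$, and ``forgetting $g$'' is composing with $\pi$. Note, however, that your entire third and fourth paragraphs are unnecessary. Once you have $G'\colon D^2\to B_{n-1}(X)$ with $G'|_{\partial D^2}=\gamma$ (up to the homotopy $h$), you are done: the proposition asks for injectivity into $\mathbf{B}_{n-1}(X)$, and $h$ already identifies this with $\mathbf{B}_{n-1}(X\setminus\{v\})$. There is no need to homotope $G$ so that $G'$ avoids $v$, and the ``sweep'' you describe as the hardest step never needs to be carried out. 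The only place the hypothesis $v\in\partial X$ is used is to guarantee the homotopy equivalence $B_{n-1}(X)\simeq B_{n-1}(X\setminus\{v\})$ underlying $h$, which you already invoked at the outset.
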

\begin{proof}
Let $B_{1,n-1}(X)=F_n(X)/\mathbf{S}_{n-1}$ by considering $\mathbf{S}_{n-1}$ as a subgroup of $\mathbf{S}_n$ which consists of permutations on $\{1,\dots,n\}$ fixing 1. Then
\[B_{1,n-1}(X)=\{(x_1,\{x_2,\dots,x_n\})| x_i\neq x_j\text{ if }i\neq j\},\]
and the quotient map $p:B_{1,n-1}(X)\to B_n(X)$ forgetting the order is a covering map which is non-regular in general.
Moreover, $i_v$ lifts to $\tilde i_v:B_{n-1}(X\setminus v)\to B_{1,n-1}(X)$ defined by $\tilde i_v(\mathbf{x}) = (v,\mathbf{x})$ and there is a map $\pi:B_{1,n-1}(X)\to B_{n-1}(X)$ forgetting the first coordinate, namely,
\[\pi( x_1, \{x_2,\dots,x_n\}) = \{x_2,\dots,x_n\},\]
satisfying that $\pi\circ\tilde i_v$ is homotopic to the identity, and it induces the isomorphism $\pi_*\circ (\tilde i_v)_*$.
Hence $(\tilde i_v)_*$ is injective and therefore so is $(i_v)_* = p_*\circ(\tilde i_v)_*$.
\end{proof}

\section{Modifications and simple complexes}
\subsection{Edge contraction}
The first nontrivial observation is about edge contraction as follows.
\begin{prop}\label{prop:generaledgecontraction}
Let $X$ be a complex and $e$ be an edge. Then the quotient map $q:X\to X/\bar e$ induces a map 
\[q^*:\mathbf{B}_n(X/\bar e)\to\mathbf{B}_n(X),\]
which is surjective if none of $\partial e$ is of valency 1. 
\end{prop}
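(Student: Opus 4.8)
The plan is to treat the collapse $q\colon X\to X/\bar e$ as a homotopy equivalence --- since $\bar e$ is a contractible subcomplex and $(X,\bar e)$ is a CW pair with the homotopy extension property --- and to realize $q^*$ \emph{not} as a map induced by a space-level map $X/\bar e\to X$. Indeed there is no map $X/\bar e\to X$ that is at once a homotopy inverse to $q$ and injective: writing $\partial e=\{v,w\}$ and $v'=q(\bar e)$, the link $\lk(v')$ is the union of $\lk(v)$ and $\lk(w)$ with the two directions along $e$ removed, and no vertex of $X$ carries such a link. Instead I would define $q^*$ directly on configuration spaces. First I would fix the canonical identification $X/\bar e\setminus\{v'\}\cong X\setminus\bar e$ and, after subdividing $e$ sufficiently so that there is room along it, define $q^*$ on $B_n(X/\bar e)$ by sending the (at most one) point sitting at $v'$ onto the arc $\bar e$, and the remaining points via the identification above, precomposed with a resizing map $h$ of the cells incident to $\bar e$ exactly as in the appending-a-point discussion of Section~2. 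The image then consists of configurations lying in $X\setminus\mathring e$ together with at most one point parked on $\bar e$.

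For surjectivity I would argue at the level of loops. Given a class in $\mathbf{B}_n(X)$, represent it by a loop of configurations and homotope it, through loops in $B_n(X)$, so that no point ever lies in the interior of $e$: whenever a point is in $\mathring e$ I slide it to the nearer endpoint and then out of $\bar e$ into an adjacent cell. The resulting loop lies in the image of $q^*$, and pulling it back through the identification yields a loop in $B_n(X/\bar e)$ mapping to the given class. This is exactly where the hypothesis $\val(v),\val(w)\ge 2$ enters: it guarantees that at each endpoint the link has a component other than the direction along $e$, i.e.\ a genuine escape route off $\bar e$, so a point can be pushed off $e$ while remaining disjoint from the others. If an endpoint had valency $1$, the point could be trapped against a dead end and the loop need not be homotopic to one avoiding $\mathring e$, which is why the statement excludes that case.

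The main obstacle, and the step I expect to be most delicate, is the well-definedness and continuity of $q^*$ across the collapsed vertex: as a point crosses $v'$ in $X/\bar e$ its image must sweep continuously across $\bar e$ in $X$ rather than jump, and simultaneously all $n$ image points must stay pairwise distinct. I would handle this by carrying out the construction after sufficient subdivision and using the resizing map $h$ to confine the ``near $v'$'' part of each configuration to a thin collar of $\bar e$, so that the single point allowed at $v'$ travels continuously along $\bar e$ while the others remain in $X\setminus\st(\bar e)$; checking that this collar can be chosen uniformly, using the separation constant $\epsilon=\epsilon(X)$ from Section~2, is the technical heart. Once continuity and injectivity of $q^*$ are secured, the surjectivity homotopy is routine, its only real input being the valency condition used to clear the interior of $e$.
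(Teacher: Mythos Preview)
Your construction of $q^*$ and the paper's differ in an important way, and your version runs into a real obstruction.

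The paper does \emph{not} attempt to build a continuous map $B_n(X/\bar e)\to B_n(X)$. Instead it introduces the subspace $B_{n;\le 1}(X;\bar e)\subset B_n(X)$ of configurations meeting $\bar e$ in at most one point, observes that $q$ itself restricts to a map $q|\colon B_{n;\le 1}(X;\bar e)\to B_n(X/\bar e)$, and shows by a path-lifting argument (using only that $\bar e$ is contractible) that $(q|)_*$ is an isomorphism on $\pi_1$. Then $q^*$ is defined purely at the group level as $\iota_*\circ (q|)_*^{-1}$, where $\iota$ is the inclusion $B_{n;\le 1}(X;\bar e)\hookrightarrow B_n(X)$. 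The ``continuity across the collapsed vertex'' obstacle you identify is genuine and is precisely why the paper avoids a space-level inverse: there is no canonical point of $\bar e$ to send a configuration point sitting at $v'$ to, and any choice fails to be continuous as the point approaches $v'$ from different sides. Your resizing/collar idea does not get around this, because the problem is not separating the points from one another but deciding where on $\bar e$ the single point near $v'$ should land.

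Your surjectivity argument also overshoots. You claim that any loop in $B_n(X)$ can be homotoped so that \emph{no} point ever lies in $\mathring e$. This is false already for $X=S^1$ written as two arcs meeting at two bivalent vertices with $e$ one of the arcs: the generator of $\mathbf{B}_2(S^1)\cong\mathbb{Z}$ cannot be represented by a loop avoiding $\mathring e$, since $B_2(X\setminus\mathring e)=B_2(I)$ is contractible. What \emph{is} true, and what the paper proves, is the weaker statement that any loop can be homotoped so that at most one point lies on $\bar e$ at any time; the valency hypothesis gives enough room to evacuate all but one point from $\bar e$. That weaker statement is exactly the surjectivity of $\iota_*$, which is all that is needed.
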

\begin{proof}
We first consider a subspace $B_{n;\le 1}(X;\bar e)$ of $B_n(X)$ consisting of configurations $\mathbf{x}=\{x_1,\dots,x_n\}$ such that at most 1 of $x_i$'s is lying on $\bar e$, or equivalently, for $\mathbf{x}\in B_n(X)$,
\[\mathbf{x}\in B_{n;\le 1}(X;\bar e)\Longleftrightarrow \#(\mathbf{x}\cap \bar e)\le 1.\]
Then the map $q$ induces $q|:B_{n;\le1}(X;\bar e)\to B_n(X/\bar e)$.

Let $*_n\subset X\setminus \bar e$ be a basepoint for both $B_{n;\le 1}(X;\bar e)$ and $B_n(X/\bar e)$, and
let a path $\gamma:(I,\partial I)\to (B_n(X/\bar e), *_n)$ be given.
Then it is not hard to prove that there exists a lift $\tilde\gamma:(I,\partial I)\to B_{n;\le 1}(X;\bar e)$ so that $q\circ\tilde\gamma = \gamma$ by regarding $\bar e$ as a path. Moreover, the lift is unique up to homotopy since $\bar e$ is contractible.
Therefore, $q|$ induces an isomorphism $(q|)_*$ between fundamental groups. Then the map $q^*$ is defined by a composition $\iota_*\circ (q|)_*^{-1}$, where $\iota_*$ is the map induced from the obvious inclusion $\iota:B_{n;\le 1}(X;\bar e)\to B_n(X)$, and is well-defined as desired.

\begin{figure}[ht]
\[
\xymatrix{
X=\vcenter{\hbox{\def\svgscale{1.2}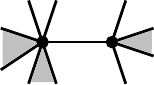}}\ar[r]^-q&
\vcenter{\hbox{\def\svgscale{1.2}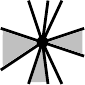}}=X/\bar e\\
\tilde\gamma=\vcenter{\hbox{\def\svgscale{1.2}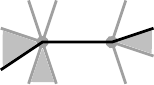}}& 
\vcenter{\hbox{\def\svgscale{1.2}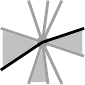}}=\gamma\ar@{|->}[l]
}
\]
\caption{Local pictures of $X/\bar e$ and $X$, and the lift $\tilde \gamma$ of a path $\gamma$}
\label{fig:Xe}
\end{figure}

Suppose that none of $\partial e$ is of valency 1.
Then for the surjectivity, it is enough to show that $\iota_*$ is surjective. In other words, any $\delta:(I,\partial I)\to(B_n(X),*_n)$ is homotoped to $\delta'$ relative to the boundary such that $\#(\delta'(t)\cap \bar e)\le 1$ for all $t\in[0,1]$. 

At first break $\delta$ into several pieces according to the change of $m(t)=\#(\delta(t)\cap \bar e)$, and use induction on $m$.
Then since both $\val(v)$ and $\val(w)\ge 2$, we have enough room for a given configuration to be evacuated from $e$.
This can be done easily and we omit the detail. 
\end{proof}

If none of $\partial e$ is of valency 1, then we may say that $X$ is simpler than $X/\bar e$ according to the definition of simplicity.
Note that $q$ does not directly induce the map between braid groups since it is not an embedding.
Under some conditions, one can find an embedding which plays a similar role to $q$ so that it induces precisely the inverse of $q^*$, and therefore an isomorphism. We will see this later. 

On the other hand, if one of $\partial e$ is of valency 1, then $q$ can be considered as a strong deformation retract and therefore $q^*$ is actually induced from the obvious embedding $X/\bar e\to X$ which is a homotopy inverse of $q$.
However, $q^*$ is neither injective nor surjective in general. It depends on the structure of $\st(e)$.

\begin{ex}[2-braid group on a tree]\label{ex:tree}
We denote by $T_k$ a labelled tree homeomorphic to the cone of $k$ points as depicted in Figure~\ref{fig:corolla}.

\begin{figure}[ht]
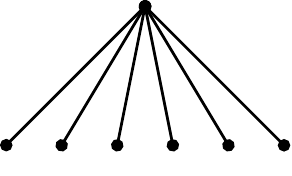
\caption{A labelled tree $T_k$ with only one vertex of valency $k\ge 3$}
\label{fig:corolla}
\end{figure}

Then it is known that $\mathbf{B}_n(T_k)$ is always a free group as follows.

\begin{lem}\label{lem:corolla}\cite{KKP}
The braid group $\mathbf{B}_n(T_k)$ is a free group of rank $r=r(n,k,k)$, where
\begin{equation}
\label{eq:rank}r(n,\nu,\mu)=(\nu-2)\binom{n+\mu-2}{n-1} - \binom{n+\mu-2}{n} -(\nu-\mu-1).
\end{equation}
\end{lem}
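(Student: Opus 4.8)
The plan is to exhibit $\mathbf{B}_n(T_k)$ as the fundamental group of a connected graph, so that freeness is automatic, and then to read off the rank from an Euler characteristic computation. Since $\mathbf{B}_n$ depends only on the homeomorphism type, I would first subdivide $T_k$ enough that Abrams' discrete configuration space $UD_n(T_k)$ is homotopy equivalent to $B_n(T_k)$. Then I would invoke the Farley--Sabalka discrete Morse theory on a tree: each critical cell of dimension $i$ must involve $i$ distinct essential (valency $\ge 3$) vertices, and $T_k$ has exactly one such vertex, namely the cone point $0$. Hence $UD_n(T_k)$ collapses onto a $1$-dimensional CW complex, and since a connected $1$-complex has free fundamental group, $\mathbf{B}_n(T_k)$ is free. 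Connectedness of $UD_n(T_k)$ follows from connectedness of $T_k$ together with the presence of the essential vertex, which gives enough room to rearrange tokens.

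For the rank, I would use that a connected graph $G$ satisfies $\operatorname{rank}\pi_1(G)=1-\chi(G)$, so it suffices to compute $\chi\bigl(UD_n(T_k)\bigr)$. Rather than count critical cells by hand, I would apply Gal's generating-function formula for the Euler characteristics of graph configuration spaces,
\[
\sum_{n\ge 0}\chi\bigl(UD_n\Gamma\bigr)\,t^n \;=\; \frac{\prod_{v}\bigl(1-(\val(v)-1)t\bigr)}{(1-t)^{|E(\Gamma)|}}.
\]
This formula is invariant under subdivision, since subdividing an edge multiplies numerator and denominator by the same factor $1-t$, so I may evaluate it on the minimal model of $T_k$ with one central vertex of valency $k$ and $k$ leaves. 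The leaves contribute trivial factors, and the expression simplifies to $\mathfrak{e}_{T_k}(t)=\dfrac{1-(k-1)t}{(1-t)^k}$.

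Expanding $(1-t)^{-k}=\sum_n\binom{n+k-1}{k-1}t^n$ and extracting the coefficient of $t^n$ gives $\chi\bigl(UD_n(T_k)\bigr)=\binom{n+k-1}{k-1}-(k-1)\binom{n+k-2}{k-1}$. Then $r=1-\chi$, and a single application of Pascal's identity $\binom{n+k-1}{k-1}=\binom{n+k-2}{k-1}+\binom{n+k-2}{k-2}$ together with the symmetries $\binom{n+k-2}{k-1}=\binom{n+k-2}{n-1}$ and $\binom{n+k-2}{k-2}=\binom{n+k-2}{n}$ rewrites $r$ as $(k-2)\binom{n+k-2}{n-1}-\binom{n+k-2}{n}+1=r(n,k,k)$, as claimed.

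I expect the only genuine obstacle to be the justification of freeness, namely that the single essential vertex forces the Morse-theoretic model to be $1$-dimensional and connected; the Euler characteristic bookkeeping via Gal's formula is then routine. An alternative, should one prefer a self-contained count, is to enumerate the Farley--Sabalka critical $1$-cells directly: these correspond to distributions of the $n$ tokens among the $k$ branches at vertex $0$ with one token occupying a non-canonical half-edge, and the same binomial identities reconcile the count with $r(n,k,k)$.
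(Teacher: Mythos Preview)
Your proposal is correct. Note, however, that the paper does not supply its own proof of this lemma: it is stated with a citation to \cite{KKP} and used as a known input. So there is no argument in the paper to compare against.

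That said, your route is exactly the standard one and essentially what underlies the cited result. The key structural point---that $T_k$ has a single essential vertex, so the Farley--Sabalka Morse complex on $UD_n(T_k)$ is $1$-dimensional and hence $\mathbf{B}_n(T_k)$ is free---is the right mechanism, and your use of Gal's generating function for the rank is a clean shortcut over a direct critical-cell count. Your algebra checks: $r(n,k,k)=(k-2)\binom{n+k-2}{n-1}-\binom{n+k-2}{n}+1$ matches the formula since $-(\nu-\mu-1)=-(k-k-1)=1$. One small point worth tightening if you write this up formally: Gal's formula is for \emph{ordered} configuration spaces in some formulations, so be explicit that you are using the unordered version (or divide by $n!$ as appropriate) and that the homotopy equivalence $UD_n(T_k)\simeq B_n(T_k)$ requires the Abrams subdivision hypothesis, which you do mention.
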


Especially, the 2-braid group $\mathbf{B}_2(T_k)$ is of rank $\binom{k-1}2$, indexed by $\{(i,j)|2\le i<j\le k\}$. Indeed, each pair $(i,j)$ corresponds to the loop $s_{i,j}$ in $B_2(T_k)$ as follows.

We first consider the tripod $T_3$ with the cone point $0$. Assume that two points $a$ and $b$ are initially lying on the edge $(1,0)$ and moreover $b$ is closer to $0$ than $a$.
Then we move $b$ to the second leaf and $a$ to the third leaf, and back $b$ to the initial position of $a$ and back $a$ to that of $b$. 
See Figure~\ref{fig:generator} and we will rigorously define this loop later in detail.
Then the loop $s$ defined in this way generates the infinite cyclic group which is actually $\mathbf{B}_2(T_3)$.
\begin{figure}[ht]
\[
s=\left(\vcenter{\hbox{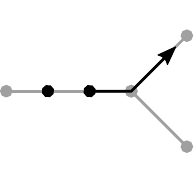}}\right)\cdot
\left(\vcenter{\hbox{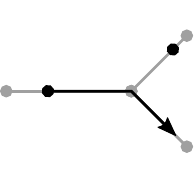}}\right)\cdot
\left(\vcenter{\hbox{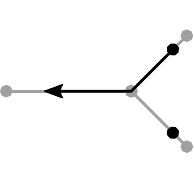}}\right)\cdot
\left(\vcenter{\hbox{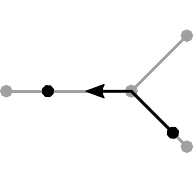}}\right)
\]
\caption{A loop $s$ in $B_2(T_3)$}
\label{fig:generator}
\end{figure}

For each pair $(i,j)$ with $i\neq j$, there exists a unique embedding $T_3\to T_k$ such that it maps $\partial T_3=\{1,2,3\}$ to $\{1,i,j\}\subset\partial T_k$ in order.
Then $s_{i,j}$ is nothing but the image of $s$ under the induced homomorphism $\mathbf{B}_2(T_3)\to\mathbf{B}_2(T_k)$. Note that $s_{j,i}$ is the inverse of $s_{i,j}$.

Now let $T$ be a tree with $k=\#(\partial T)$. We first label on $\partial T$ arbitrarily. Then there exists a unique label-preserving map $q:T\to T_k$ which takes a quotient by all internal edges and it induces a surjective homomorphism $q^*:\mathbf{B}_2(T_k)\to\mathbf{B}_2(T)$ by Proposition~\ref{prop:generaledgecontraction}. 
By the definition of $s_{i,j}$ above, the image $q^*(s_{i,j})$ coincides with the image of $s$ under the unique embedding $T_3\to T$ sending $\{1,2,3\}$ to $\{1,i,j\}$ as before.
We mean by the {\em center} $c(i,j)$ of $i$ and $j$ in $T$ that the image of $0\in T_3$ under this embedding.

\begin{figure}[ht]
\[
T=\vcenter{\hbox{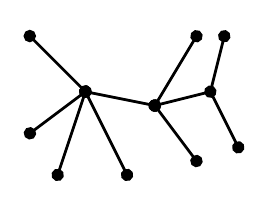}}\qquad\qquad 
\vcenter{\hbox{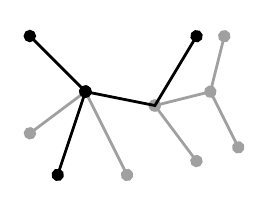}}
\]
\caption{A tree with labelled leaves and an embedding of $T_3$ corresponding to $s_{2,4}$}
\label{fig:orderedtree}
\end{figure}

Suppose that there is an isotopy $H:T_3\times I\to T$ 
such that $H_t(1)=1$ for all $t$ and $H_0(\{2,3\})=\{i,j\},H_1(\{2,3\})=\{i',j'\}$.
Then it defines a homotopy between the images of $s_{i,j}$ and $s_{i',j'}$ in $B_2(T)$, hence they are considered as the same in $\mathbf{B}_2(T)$.
More precisely, the given tree $T$ defines an equivalence relation on $\{(i,j)|2\le i<j\le k\}$ as follows.
\begin{defn}[Equivalence relation coming from a tree $T$ with ordered leaves]
Suppose the set $\partial T$ of leaves are indexed by $\{1,\dots,k\}$ and let $\binom{\partial T-1}2$ denote the set $\{(i,j)|2\le i<j\le k\}$.

Then we define an equivalence relation $\sim_T$ on $\binom{\partial T-1}2$ as $(i,j)\sim_{T}(i',j')$ if and only if
\begin{enumerate}
\item $c(i,j)=c(i',j')\in T$;
\item $[i]=[i']$ and $[j]=[j']$ in $\pi_0(T\setminus\{c(i,j)\})$.
\end{enumerate}
\end{defn}

Therefore, the equivalence classes depend not on the whole tree $T$ but only on the local shape, namely the {\em tangent space}, of each vetex of valency $\ge 3$. Hence each generator $s_{i,j}$ corresponds to a triple $(v, e_1, e_2)$ of a vertex $v$ with $\val(v)\ge 3$, and two half-edges $e_1$ and $e_2$ emitting from $v$ which are not heading to the chosen point in the boundary, $1\in\partial T$ in our example.

One can prove that this equivalence gives the complete set of defining relators for $\mathbf{B}_2(T)$, and therefore $\mathbf{B}_2(T)$ is free as well. 
\[\mathbf{B}_2(T)=\langle s_{2,3},\dots,s_{k-1,k}| s_{i,j}=s_{i',j'}\text{ if }(i,j)\sim_T(i',j')\rangle.\]

The rank is given by
\begin{equation}\label{eq:treerank}
r_2(T)=\sum_{v\in V(T)}\binom{\val(v)-1}2,
\end{equation}
where $V(T)$ is the set of vertices of $T$.
\end{ex}

\begin{rmk}
Recall the point appending map $\mathbf{B}_{n-1}(T)\to\mathbf{B}_n(T)$ defined in Proposition~\ref{prop:injection}, and consider all possible compositions which yield $\mathbf{B}_2(T)\to\mathbf{B}_n(T)$.
Then the images of $s_{i,j}$'s under these compositions generate $\mathbf{B}_n(T)$. 

More precisely, each generator is characterized by a vertex of valency $\ge 3$, two edges as before, and in addition the number of points in each component of the complement of that vertex in $T$. See \cite{KKP} for detail.
\end{rmk}

\subsection{Attaching higher cells}
We first consider the {\em generalized capping-off} $X$, which is to attach a $k$-simplex along a $(k-1)$-sphere of a given complex $Y$.
We exclude the cases when $(X,Y)=(D^2,S^1)$ or $(D^3,S^2)$ because 
their braid groups are already known, and moreover they are extremal in the sense of that the braid groups change dramatically before and after attaching simplices.

\begin{prop}\label{prop:highercell}
Let $X=Y\sqcup_\phi D^k$ via the embedding $\phi:\partial D^k=S^{k-1}\to Y$ for some $k\ge 3$ and a complex $Y$ different from $S^2$.
Then the embedding $Y\to X$ is a braid equivalence.
\end{prop}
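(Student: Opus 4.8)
The plan is to show that the inclusion $\iota\colon Y\to X$ induces an isomorphism $\iota_*\colon\mathbf{B}_n(Y)\to\mathbf{B}_n(X)$ for every $n\ge 1$ by an \emph{evacuation} argument: any configuration entering the attached cell can be pushed back into $Y$, and the hypothesis $k\ge 3$ guarantees enough room to do so without collisions. Write $U=\mathring D^k$ for the open attached cell, $c\in U$ for its center (the image of $0\in D^k$), and $\partial U=\phi(S^{k-1})\subset Y$, so that $X=Y\cup\overline U$ and $\overline U\setminus\{c\}$ radially deformation retracts onto $\partial U$. Applying this radial retraction pointwise produces a homotopy $H_s$ of $X\setminus\{c\}$ which is a homeomorphism for $s<1$ and collapses each ray of $U$ to its endpoint on $\partial U$ at $s=1$. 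Consequently $H_s$ carries configurations to configurations for every $s<1$, and the only possible collisions occur at the final time $s=1$, precisely when two points of the configuration lie on a common ray emanating from $c$. Throughout I would work in the locally Euclidean strata near $U$, where the relevant loci are subcomplexes of the indicated codimension after sufficient subdivision, so that transversality applies.

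For surjectivity I would begin with a loop $\gamma\colon(S^1,*)\to(B_n(X),*_n)$ with $*_n\subset Y$. The locus of configurations containing $c$ has codimension $k\ge 3$, and the locus of configurations with two points on a common ray from $c$ has codimension $k-1\ge 2$; since $\gamma$ is $1$-dimensional, a generic perturbation rel basepoint makes $\gamma$ avoid both loci. Then $H_s\circ\gamma$ is a collision-free homotopy, including at $s=1$ since $\gamma$ misses the ray locus, from $\gamma$ to a loop lying in $B_n(Y)$, so $\iota_*$ is onto.

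For injectivity I would take a loop in $B_n(Y)$ bounding a disk $D\colon D^2\to B_n(X)$ and homotope $D$ rel $\partial D\subset B_n(Y)$ into $B_n(Y)$. The same general position applies to the $2$-dimensional $D$: it can be made to avoid the codimension-$k\ge 3$ locus of configurations through $c$, after which $H_s\circ D$ is collision-free for all $s<1$. The only remaining question is whether $D$ meets the ray locus, of codimension $k-1$, which governs collisions at $s=1$. When $k\ge 4$ this codimension exceeds $2=\dim D$, so a generic $D$ misses it entirely and $H_1\circ D$ lands in $B_n(Y)$, completing the argument. The delicate case is $k=3$, where the ray locus has codimension exactly $2$ and a generic disk meets it in finitely many isolated points; at each such point two of the moving points are forced toward the same point of the attaching sphere $\partial U=S^2$.

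The main obstacle, then, is to resolve these finitely many isolated collisions when $k=3$. Near each bad parameter I would modify the homotopy so that the two offending points are separated angularly on $S^2$ rather than pushed to the common boundary point; this is possible because $S^{k-1}=S^2$ has dimension $\ge 1$, so a pair of points on it can always be kept distinct. The only obstruction to performing these local resolutions coherently over all of $D^2$ is a sphere-braid monodromy, which survives exactly when the attaching sphere exhausts $Y$, that is when $Y=\partial U=S^2$; this is precisely the excluded case, and when $Y\neq S^2$ there is a point of $Y\setminus\partial U$ into which an offending point may be routed, killing the obstruction. Granting this local analysis, the modified $H_1\circ D$ gives a nullhomotopy of the original loop inside $B_n(Y)$, so $\iota_*$ is injective and hence an isomorphism. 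I expect the $k=3$ collision-resolution to be the single genuinely technical point; the case $n=1$ reduces to the classical fact that attaching a cell of dimension $\ge 3$ does not change $\pi_1$, and the cases $k\ge 4$ are comparatively routine.
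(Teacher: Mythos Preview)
Your overall strategy matches the paper's: remove the centre $c$, use the radial retraction, and argue by codimension. Surjectivity and the case $k\ge 4$ are handled exactly as in the paper, and you correctly isolate $k=3$ as the only delicate case.

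The gap is in how you resolve the finitely many collision points when $k=3$. Your ``angular separation on $S^2$'' does not by itself do the work: the obstruction to extending the two--point map over a small disk around each bad parameter lives in $\pi_1$ of the configuration space of two points on $S^2$ \emph{punctured at the images of the remaining $n-2$ points} (some of which may themselves have been pushed onto $S^2$ by the retraction), and this group is in general nontrivial, so the obstruction does not vanish merely because $\dim S^2\ge 1$. Your subsequent appeal to ``a point of $Y\setminus\partial U$ into which an offending point may be routed'' is the right instinct, but the route must be \emph{local}, adjacent to $\partial D^3$, so that the modification does not interact with the other $n-2$ points. The paper makes this precise via a dichotomy forced by $Y\neq S^2$: either $\partial D^3$ is an entire boundary component of $Y$, in which case the radial retraction extends across a collar of $\partial D^3$ in $Y$ and no collision analysis is needed; or $\partial D^3$ meets $\br(X)$ at some branch point $p$, in which case one homotopes each bad parameter so the collision occurs on the ray through $p$, makes one of the two offending maps locally constant equal to $p$, and then pushes it a short distance along an ``extended ray'' $[p,p+\epsilon]\subset \st(p)\setminus D^3$. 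After this local modification the retraction $r\circ f$ is collision--free. Filling in this dichotomy and the extended--ray construction would complete your argument along the paper's lines.
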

\begin{proof}
We identify $\partial D^k$ with the subspace of $Y$ via $\phi$ from now on.

Let $*\in\mathring{D}^k$ be a point, and consider the subspace $B_{n-1;1}(X;*)$ of $B_n(X)$ consisting of configurations containing $*$.
Then this is of codimension $k\ge3$, in the sense that $B_{n-1;1}(X;*)\times\mathbb{R}^3$ can be embedded into $B_n(X)$.
Hence we may assume that all paths and homotopies in $B_n(X)$ are in general position with respect to $\{*\}$ and therefore they avoid $*$. In other words, the inclusion $X\setminus\{*\}\to X$ is a braid equivalence.

Let $D^k\setminus\{*\}\to\partial D^k$ be the radial projection, or the strong deformation retract, which naturally extends to $r:X\setminus\{*\}\to Y$, the homotopy inverse of the inclusion $Y\to X\setminus\{*\}$.

\begin{figure}[ht]
\[
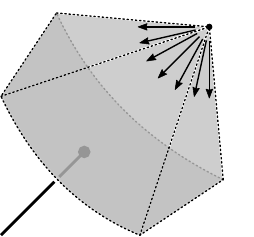\qquad\qquad
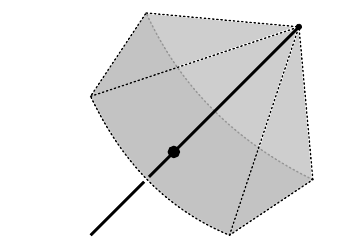
\]
\caption{A radial projection on $X\setminus\{*\}$ and an extended ray $[p,p+\epsilon]$}
\label{fig:radialprojection}
\end{figure}

Consider $B_n^{r\text{-fail}}=\{\mathbf{x}\in B_n(X\setminus\{*\})| \#(r(\mathbf{x}))<n\}$ consisting of configurations $\mathbf{x}$ such that at least two points in $\mathbf{x}$ are lying in a ray emitting from $*$ in $D^k$. Roughly speaking, it is the set of failures for $r$ to be extended to $\bar r:B_n(X\setminus\{*\})\to B_n(Y)$.
Then $B_n^{r\text{-fail}}$ is of codimension at least 2 in $B_n(X\setminus\{*\})$ as follows.
\begin{align*}
\codim\left(B_n^{r\text{-fail}}\subset B_n(X\setminus\{*\})\right) \ge \codim(\text{ray}\subset D^k\setminus\{*\})=(k-1)\ge 2.
\end{align*}

Hence by assuming the general position with respect to $B_n^{r\text{-fail}}$, we may assume that any loop misses $B_n^{r\text{-fail}}$ for all $k\ge 3$, and so does any disk for $k\ge 4$. Note that when $k=3$, a disk in $B_n(X\setminus\{*\})$ may intersect finitely many times with $B_n^{r\text{-fail}}$. Therefore, the map $r$ induces the surjective homomorphism 
\[
r^*:\mathbf{B}_n(Y)\to\mathbf{B}_n(X\setminus\{*\})\simeq\mathbf{B}_n(X),
\]
which is also injective if $k\ge 4$.

We claim that $r^*$ is an isomorphism for $k=3$ as well.

Suppose that $\partial D^3\subset\mathring{X}$, or equivalently, $\partial D^3$ is a component $\partial_0 Y$ of $\partial Y$. Then $X$ is an ordinary {\em capping-off} of the $2$-sphere $\partial_0 Y$ in $Y$, and so $Y\setminus \partial_0 Y\simeq X\setminus\{*\}$.
Since the homotopy equivalence between $Y$ and $Y\setminus\partial_0 Y$ induces the braid equivalence,  the inclusions $Y\to X\setminus\{*\}\to X$ induce
\[
\mathbf{B}_n(Y)\simeq\mathbf{B}_n(X\setminus\{*\})\simeq\mathbf{B}_n(X).
\]
Indeed, the strong deformation retract pushing $X\setminus\{*\}$ into $Y\setminus\partial_0 Y$, slightly smaller than $r$, induces the isomorphism $r^*$.

Suppose that $\partial D^3\not\subset\mathring{X}$. Then since $Y\neq S^2$ by the hypothesis, $\partial D^3\not\subset\partial X$, and therefore $\partial D^3$ must intersect $\br(X)$.
%
The existence of a branch point $p\in\partial D^3\cap\br(X)$ implies that we can extend the ray $[*,p]$ emitting from $*$ passing through $p$ a little bit more. We denote the extended ray by $[p,p+\epsilon]$, where $p+\epsilon$ is a {\em point} lying in $\st(p)\setminus D^3$.
\begin{figure}[ht]
\[
\xymatrix@C=4pc{
\vcenter{\hbox{\def\svgscale{0.8}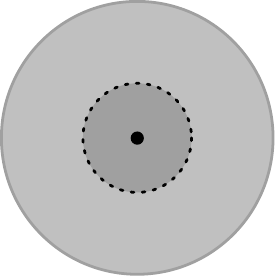}}\ar[r]^-{f=\{f_i\}}&
\vcenter{\hbox{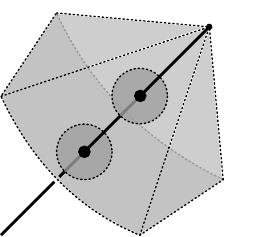}}\subset B_n(X\setminus\{*\})
}
\]
\caption{A homotopy disk and a small neighborhood $U$}
\label{fig:extendedray}
\end{figure}

Let $f=\{f_1(z),\dots,f_n(z)\}:(D^2,\partial D^2)\to(B_n(X\setminus\{*\}),B_n(Y))$ be given. To prove the claim, it suffices to show that $f$ can be homotoped into $B_n(Y)$.
Since $f$ is in general position with respect to $B_n^{r\text{-fail}}$, without loss of generality, we may assume that $f(D^2)$ intersects $B_n^{r\text{-fail}}$ exactly once at $0\in D^2$, and furthermore that there exists only one ray $[*,p']$ emitting from $*$, which contains exactly two points, say $f_1(0)$ and $f_2(0)$, among $f(0)$. Here $p'=r(f_1(0))=r(f_2(0))$.

Then we can further homotope $f$ by keeping $f$ in the general position so that $p'$ becomes $p$, and one of $f_1$ and $f_2$, say $f_1$, is constantly $p$ on a neighborhood $U\subset D^2$ of $0$.
The last comes from that in a small enough neighborhood, each $f_i$ can be homotoped separately.

Finally, we pull down $f_1$ on $U$ by using $[p,p+\epsilon]$ so that $f_1(0)\subset (p,p+\epsilon)$ as depicted in Figure~\ref{fig:sink}, and then $r\circ f:D^2\to B_n(Y)$ is well-defined and homotopic to $f$ relative to $\partial D^2$, as desired.
\end{proof}

We call the subcomplex $f^{-1}(B_n^{r\text{-fail}})$ of $D^2$ a {\em failure locus}. 

\begin{figure}[ht]
\[
\xymatrix{
U=\vcenter{\hbox{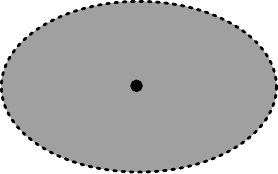}}\simeq
\vcenter{\hbox{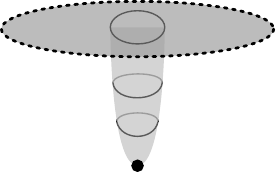}}\quad\ar[r]^-{f_1}&\quad
\vcenter{\hbox{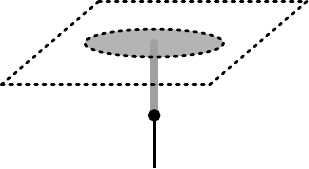}}\subset Y
}
\]
\caption{Pulling down a homotopy disk along the extended ray}
\label{fig:sink}
\end{figure}

\begin{rmk}\label{rmk:link1}
The effect of capping-off as above on a link $\lk(v)$ for $v\in S^{k-1}\subset Y$ is again a capping-off of $(k-2)$-sphere in $\lk(v)$ since $\lk(v)\cap S^{k-1}=S^{k-2}$ and $lk(v)\cap D^k=D^{k-1}$ in $X$.

Conversely, for any $v\in X$ and embedded sphere $S$ in $\lk(v)$, there exists a capping-off on $X$ which caps $\lk(v)$ off along $S$.
\end{rmk}

The direct consequence of the above proposition is as follows.
\begin{cor}\label{cor:2skeleton}
Let $X$ be a complex.
Then the embedding $X^{(2)}\to X$ of $2$-skeleton $X^{(2)}$ is a braid equivalence unless $X=D^3$ and $X^{(2)}=S^2$.
\end{cor}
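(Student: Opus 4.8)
The plan is to build $X$ up from its $2$-skeleton by attaching the simplices of dimension $\ge 3$ one at a time and to invoke Proposition~\ref{prop:highercell} at every stage. Concretely, I would list the simplices of $X$ of dimension $\ge 3$ in order of nondecreasing dimension as $\sigma_1,\dots,\sigma_m$, put $Y_0=X^{(2)}$, and set $Y_i=Y_{i-1}\cup\bar\sigma_i$. Since every proper face of $\sigma_i$ has dimension smaller than $\dim\sigma_i$, it already lies in $Y_{i-1}$; hence $\bar\sigma_i$ is a cell $D^{k}$, with $k=\dim\sigma_i\ge 3$, glued to $Y_{i-1}$ along the embedded boundary sphere $\partial\bar\sigma_i\cong S^{k-1}$. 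Each inclusion $Y_{i-1}\hookrightarrow Y_i$ is therefore an instance of the attachment in Proposition~\ref{prop:highercell}, so it is a braid equivalence as soon as $Y_{i-1}\neq S^2$. The inclusion $X^{(2)}\hookrightarrow X$ factors as the composite $Y_0\hookrightarrow Y_1\hookrightarrow\dots\hookrightarrow Y_m=X$, and composing the resulting isomorphisms identifies $\mathbf{B}_n(X^{(2)})$ with $\mathbf{B}_n(X)$ for every $n$.

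Before running the induction I would dispose of the case $\dim X\le 2$, where $X^{(2)}=X$ and the inclusion is the identity, so assume $m\ge1$; note that then $\sigma_1$ is necessarily a $3$-simplex, since any simplex of dimension $\ge 4$ carries a $3$-dimensional face, which would itself be a lower-dimensional member of the list. The only hypothesis to verify along the way is $Y_{i-1}\neq S^2$, and here the dimension count does most of the work: for $i\ge 2$ the complex $Y_{i-1}$ already contains the $3$-cell $\bar\sigma_1$, so $\dim Y_{i-1}\ge 3$ and $Y_{i-1}\neq S^2$ automatically. Thus the induction can only break at the very first attachment, when $Y_0=X^{(2)}$ happens to be a $2$-sphere.

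The hard part, then, is to show that this single bad case is exactly the excluded pair. So suppose $X^{(2)}\cong S^2$ and $m\ge1$. The first cell $\bar\sigma_1$ is a tetrahedron whose boundary $\partial\bar\sigma_1$ is a $2$-sphere assembled from four of the triangles of the triangulated surface $X^{(2)}$. Because $X^{(2)}$ is a combinatorial $2$-manifold, each of its edges lies in exactly two triangles; the four triangles of $\partial\bar\sigma_1$ already account for both triangles along each of their six edges, so the collection of triangles of $\partial\bar\sigma_1$ is closed under edge-adjacency in the connected surface $X^{(2)}$, forcing $\partial\bar\sigma_1=X^{(2)}$. Hence $X^{(2)}$ is the boundary of a single tetrahedron on four vertices; since $X$ has only these four vertices, the one available $3$-simplex is $\sigma_1$ itself and no simplex of dimension $\ge 4$ can occur, so $m=1$ and $X=\bar\sigma_1=D^3$. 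This is precisely the excluded pair $(X,X^{(2)})=(D^3,S^2)$, matching the one case $(X,Y)=(D^3,S^2)$ barred in Proposition~\ref{prop:highercell}. In every remaining case $X^{(2)}\neq S^2$, the induction proceeds unobstructed, and the inclusion $X^{(2)}\hookrightarrow X$ is a braid equivalence. The genuine content beyond Proposition~\ref{prop:highercell} is exactly this bookkeeping: ruling out $S^2$ as an intermediate stage except for $X=D^3$.
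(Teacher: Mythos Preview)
Your proof is correct and follows exactly the approach the paper intends: the paper states the corollary as a ``direct consequence'' of Proposition~\ref{prop:highercell} without further argument, and you have supplied the natural filtration $X^{(2)}=Y_0\subset Y_1\subset\cdots\subset Y_m=X$ by attaching the higher simplices in nondecreasing dimension, together with the bookkeeping that verifies the hypothesis $Y_{i-1}\neq S^2$ at each step. Your identification of the exceptional case via the edge-adjacency argument on the combinatorial $2$-sphere is a clean way to pin down that $X^{(2)}\cong S^2$ forces $X$ to be a single tetrahedron.
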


Moreover, we can obtain the same result for the 2-cell attaching under the certain condition.
\begin{cor}\label{cor:2cell}
Let $X=Y\sqcup_\phi D^2$ via the embedding $\phi:\partial D^2\to Y$. 
Suppose $\phi(\partial D^2)$ bounds a disc $D'$ in $Y$, and $\mathring{D}'\cap \br(Y)\neq\emptyset$. 
Then the embedding $Y\to X$ is a braid equivalence.
\end{cor}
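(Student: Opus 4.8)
The plan is to dominate the attached $2$-cell by an auxiliary $3$-ball and then apply the generalized capping-off, Proposition~\ref{prop:highercell}, in two directions. Write $C=\phi(\partial D^2)$ and let $S=D^2\cup_C D'$ be the embedded $2$-sphere in $X$ obtained by gluing the attached disc $D^2$ to the disc $D'\subset Y$ along their common boundary circle $C$; this is embedded since $D'\subset Y$ while $\mathring D^2\cap Y=\emptyset$, so the two discs meet only in $C$. Since $\mathring D'\cap\br(Y)\neq\emptyset$, fix a branch point $p\in\mathring D'$. Because $D^2$ is glued along $C$, which is disjoint from $\mathring D'$, we have $\lk_X(p)=\lk_Y(p)$, so $p\in\br(X)$; in particular $X$ is not a manifold and hence $X\neq S^2$. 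I would then cap off $S$: set $X'=X\sqcup_\psi D^3$ with $\psi\colon\partial D^3=S^2\to X$ an embedding onto $S$. As $X\neq S^2$, Proposition~\ref{prop:highercell} with $k=3$ shows that the inclusion $X\hookrightarrow X'$ is a braid equivalence.

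Next I would show that $Y\hookrightarrow X'$ is also a braid equivalence. Since $D^2\subset\partial D^3=S$, set-theoretically $D^2\cup D^3=D^3$, so $X'=Y\cup D^3$ with $D^3\cap Y=D'$; that is, $X'$ is obtained from $Y$ by gluing the $3$-ball $D^3$ along the hemisphere $D'$ of its boundary. Using an identification $D^3\cong D'\times[0,1]$ with $D'\times\{0\}=D'$, the projection $D'\times[0,1]\to D'$ fixes the gluing locus and so extends to a retraction $r\colon X'\to Y$. I would then run the codimension argument from the proof of Proposition~\ref{prop:highercell} essentially verbatim, now with the $1$-dimensional fibres $\{x\}\times[0,1]$ in place of the radial rays: deleting an interior point $*\in\mathring D^3$ costs codimension $3$, so configurations and homotopies avoid it; the failure locus, where two configuration points share a fibre, has codimension at least $(3-1)=2$, so loops miss it and homotopy disks meet it only finitely often; and when a disk does meet it, the branch point $p\in\mathring D'$ supplies an extended ray leaving $D'$ into $\br(X')$ along which the offending point is pulled down, exactly as in the $k=3$ case. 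This makes $r$ induce isomorphisms $\mathbf B_n(X')\to\mathbf B_n(Y)$, so $Y\hookrightarrow X'$ is a braid equivalence.

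Finally, the inclusion $Y\hookrightarrow X$ composed with $X\hookrightarrow X'$ is the inclusion $Y\hookrightarrow X'$, and since $(X\hookrightarrow X')_*$ and $(Y\hookrightarrow X')_*$ are isomorphisms for every $n\geq1$, so is $(Y\hookrightarrow X)_*$, which is the desired conclusion. The main obstacle is the middle step: unlike Proposition~\ref{prop:highercell}, the extra piece to be collapsed is a $2$-cell rather than a high-dimensional cell, so it is not automatically invisible to configurations. Absorbing $D^2$ into the boundary of the auxiliary $3$-ball is precisely what restores the codimension-$2$ estimate, and the hypothesis $\mathring D'\cap\br(Y)\neq\emptyset$ is exactly what provides the room to pull points off $D'$; without a branch point on $D'$ the retraction would not be realizable on configuration spaces, consistent with the fact that capping off an ordinary $2$-sphere changes the braid group.
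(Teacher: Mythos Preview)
Your proposal is correct and follows essentially the same route as the paper: form the sphere $S=D^2\cup D'$, attach a $3$-ball along $S$ to obtain $X'$, invoke Proposition~\ref{prop:highercell} for $X\hookrightarrow X'$, and then argue that the elementary collapse $X'\to Y$ (your $D'\times[0,1]\to D'$) yields a braid equivalence $Y\hookrightarrow X'$ by rerunning the codimension and extended-ray argument with the branch point $p\in\mathring D'$. The paper states exactly this outline and explicitly omits the details of the last step, which you have supplied.
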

\begin{proof}
Note that there exists an embedded sphere $S=D\cup D'$ in $X$, such that 
\[
\mathring{D}'\cap \br(Y)\subset S\cap \br(X)\neq \emptyset.
\]
Hence $X$ satisfies the assumption of Proposition~\ref{prop:highercell}, and so $X\to X\sqcup_{S}D^3$ is a braid equivalence.

Then the embedding $Y\to X$ induces a surjection $\mathbf{B}_n(Y)\to\mathbf{B}_n(X)\simeq\mathbf{B}_n(X\sqcup_S D^3)$ as before, and moreover, in this case, we can think a strong deformation retraction $r'$ from $X\sqcup_S D^3$ to $Y$, which is nothing but an {\em elementary collapsing}. 

Then essentially the same argument as before with this elementary collapsing implies the braid equivalence of $Y\to X$. We omit the detail.
\end{proof}
In the last part of the proof, we are using the existence of a branch point in $\mathring{D}'$ again.

\begin{rmk}\label{rmk:link2}
Similar to the previous remark, the capping-off along $S^1\subset Y$ affects as the capping-off on $\lk(v)$ for any $v\in S^1$ along the $0$-sphere $S^0=\lk(v)\cap S^1$, and {\em vice versa}.
\end{rmk}

On the other hand, we can consider another type of embedding as follows.
Let $e=(v,w)$ be an edge of $X$ such that the closure $\overline{\st(v)}$ is homeomorphic to the boundary wedge sum $D^k\vee_\partial \bar e$, 
or equivalently, $\lk(v)=D^{k-1}\sqcup \{w\}$. 

Let $Y$ be a space obtained from $X$ by replacing $\overline{\st(v)}$ with $C_w(D^{k-1})$, where $C_w(D^{k-1})$ is a cone of $D^{k-1}\subset \lk(v)$ with the cone point corresponding to $w$. See Figure~\ref{fig:edgecontraction}. Then there is an obvious embedding $f:X\to Y$.
Note that $Y$ is homeomorphic to the quotient $X/\bar e$ but $f$ is different from the quotient map.

\begin{figure}[ht]
\[
\xymatrix{
\qquad\qquad X\ar[r]^f&X/\bar e\qquad\qquad \\
\overline{\st(v)}=\vcenter{\hbox{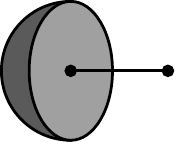}}\ar[r]\ar@<-1.7pc>[u]&
\vcenter{\hbox{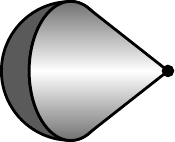}}=C_w(D^{k-1})\ar@<1.7pc>[u]
}
\]
\caption{An embedding having the same effect as the edge contraction}
\label{fig:edgecontraction}
\end{figure}

\begin{prop}\label{prop:edgecontraction}
Let $X$ be given and $e=(v,w)$ be an edge in $X$ with $\lk(v)=D^{k-1}\sqcup \{w\}$ for some $k\ge 2$.
Then the embedding $f:X\to X/\bar e$ defined as above is a braid equivalence.
\end{prop}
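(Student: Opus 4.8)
The plan is to show that the embedding $f\colon X\to X/\bar e$ induces an isomorphism on braid groups by exhibiting its inverse, which should be precisely the map $q^*$ from Proposition~\ref{prop:generaledgecontraction}. Since $\lk(v)=D^{k-1}\sqcup\{w\}$, the vertex $w$ has valency contribution that keeps both endpoints of $e$ of valency $\ge 2$ (indeed $\val(v)\ge 2$ because $\lk(v)$ has two components, and $\val(w)\ge 2$ as $w$ is an interior point of the edge $e$ together with the cone structure), so Proposition~\ref{prop:generaledgecontraction} applies and gives a surjection $q^*\colon\mathbf{B}_n(X/\bar e)\to\mathbf{B}_n(X)$. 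The heart of the argument is then to prove that $f_*$ and $q^*$ are mutually inverse, whence both are isomorphisms.

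First I would recall the geometric content of $q^*$: it was defined via the subspace $B_{n;\le 1}(X;\bar e)$ of configurations meeting $\bar e$ in at most one point, together with the lifting isomorphism $(q|)_*$ and the inclusion-induced $\iota_*$. The key observation is that the embedding $f$ lands its image inside $C_w(D^{k-1})$, which is exactly the collapsed model $X/\bar e$, and that $f$ followed by the homotopy identification is the natural map one inverts to recover $q$. So the plan is to verify the composite $q^*\circ f_*=\operatorname{id}$ on $\mathbf{B}_n(X)$ and $f_*\circ q^*=\operatorname{id}$ on $\mathbf{B}_n(X/\bar e)$ by tracking representative loops through the explicit lift/projection constructions. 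Concretely, given a loop in $B_n(X)$, I would homotope it (using $\val(v),\val(w)\ge 2$) into $B_{n;\le 1}(X;\bar e)$, apply $q|$ to land in $B_n(X/\bar e)$, and then check that reapplying $f$ returns the original loop up to homotopy, since $f$ reinserts the edge direction that $q$ collapsed.

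The essential geometric input, which replaces the ``we omit the detail'' of the earlier surjectivity argument, is that since $\lk(v)=D^{k-1}\sqcup\{w\}$, the configuration points near $v$ have a \emph{disk's worth} of room ($D^{k-1}$ with $k\ge 2$) to move past one another transverse to the edge direction. This is precisely what makes the lift along $\bar e$ unique up to homotopy \emph{and} compatible with $f$: a configuration point sitting on $\bar e$ can be pushed off into the $D^{k-1}$ factor without meeting other points, and this push is exactly the homotopy inverse of the collapse. I would package this as showing that $X/\bar e$ deformation-retracts appropriately inside the relevant configuration space, so that $f_*$ is injective, while Proposition~\ref{prop:generaledgecontraction} already supplies surjectivity of $q^*$; combined with $q^*\circ f_*=\operatorname{id}$ this forces both to be isomorphisms.

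The main obstacle I anticipate is establishing that $f_*$ is well-defined and compatible with $q^*$ at the level of \emph{homotopies of disks}, not just loops — that is, ruling out that $f_*$ kills some class or that distinct classes collapse under $q^*$. The delicate point is the interaction at the cone point $w$: in $X/\bar e$ the vertex $w$ absorbs the whole link $D^{k-1}$, so I must confirm that configurations passing through $w$ in $X/\bar e$ correspond bijectively (up to homotopy) to configurations threading the edge $e$ in $X$, with no extra relations introduced. The hypothesis $k\ge 2$ guarantees the link component $D^{k-1}$ is contractible and of positive dimension, giving exactly the room needed to realize these homotopies; I expect the verification to parallel the lifting argument of Proposition~\ref{prop:generaledgecontraction} closely, with $f$ serving as the explicit homotopy-theoretic inverse that the earlier proposition only asserted to exist ``under some conditions.''
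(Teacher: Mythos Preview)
Your approach is genuinely different from the paper's and has a real gap. You want to exhibit $q^*$ as an explicit inverse to $f_*$, relying on Proposition~\ref{prop:generaledgecontraction} for surjectivity of $q^*$. But your justification that $\val(w)\ge 2$ is incorrect: the hypothesis $\lk(v)=D^{k-1}\sqcup\{w\}$ says nothing about $\lk(w)$, and $w$ could perfectly well be a leaf. More seriously, even granting surjectivity of $q^*$, you have not explained how to prove $f_*$ is injective. The lifting argument in Proposition~\ref{prop:generaledgecontraction} lifts \emph{paths}, and your remark that ``$X/\bar e$ deformation-retracts appropriately inside the relevant configuration space'' is precisely the content that needs to be supplied: a retraction of $X/\bar e$ onto $f(X)$ does not automatically induce a map $B_n(X/\bar e)\to B_n(X)$, because distinct configuration points can collide under the retraction.

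The paper bypasses both issues with a direct-limit argument. It introduces a one-parameter family of embeddings $f_\epsilon\colon X\to X/\bar e$ with $f_0=f$, all ambient isotopic to one another, exhausting $X/\bar e$ as $\epsilon\to 1$. Then $B_n(X/\bar e)=\varinjlim_\epsilon f_\epsilon(B_n(X))$, and since any map $c\colon D^m\to B_n(X/\bar e)$ has compact image, one can choose $\epsilon$ so that the strong deformation retraction $r_\epsilon\colon X/\bar e\to f_\epsilon(X)$ moves points less than one third of the minimum pairwise distance in any $c(x)$; this makes $r_\epsilon\circ c$ a well-defined homotopy into $f_\epsilon(B_n(X))\simeq B_n(X)$. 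The compactness bound is exactly what your sketch is missing, and it makes no appeal to $\val(w)$ or to Proposition~\ref{prop:generaledgecontraction} at all.
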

\begin{proof}
We first endow a metric $d$ on $X$, and the induced metric $d'$ on $X/\bar e$. Assume that $d(v,w)=diam(\bar e)=1$.
Let $f_\epsilon:X\to X/\bar e$ for $0\le\epsilon<1$ be an embedding such that $diam'(f_\epsilon(\bar e))=1-\epsilon$ and
for any $0<\epsilon<\epsilon'<1$,
\[
f(X)=f_0(X)\subsetneq f_\epsilon(X)\subsetneq f_{\epsilon'}(X)\subsetneq X/\bar e
\]
as depicted in Figure~\ref{fig:deformation}.

For convenience sake, we define $f_1$ by the quotient map $X\to X/\bar e$,
and so $X/\bar e=\varinjlim_{\epsilon\in [0,1)} f_\epsilon(X)$. This also implies that
\begin{equation}
\label{eqn:limit}
B_n(X/\bar e) = \varinjlim_{\epsilon\in [0,1)} f_\epsilon(B_n(X)).
\end{equation}

Moreover, since all $f_\epsilon(X)$'s are ambient isotopic in $X/\bar e$, so are $f_\epsilon(B_n(X))$'s in $B_n(X/\bar e)$. Especially, the inclusion $f_\epsilon(B_n(X))\subset f_{\epsilon'}(B_n(X))$ is a homotopy equivalence for any $\epsilon<\epsilon'<1$.
Hence by this fact and (\ref{eqn:limit}), it suffices to show that any given $c:(D^m,\partial D^m)\to(B_n(X/\bar e), f_0(B_n(X)))$ factors through the inclusion 
$f_\epsilon(B_n(X))\to B_n(X/\bar e)$ for some $\epsilon<1$ up to homotopy.

Since the image of $c$ is compact, we can choose a constant $\epsilon$ such that
\[0<1-\epsilon<\frac 13 \min_{x\in D^m} \min \{ d'(x_i,x_j) | x_i\neq x_j\in c(x)\}.\]
Now let $r_\epsilon:X/\bar e\times[0,1]\to X/\bar e$ be a strong deformation retraction of $X/\bar e$ onto $f_\epsilon(X)$. Then the composition $r_\epsilon\circ c$ is a well-defined homotopy between $c$ and a map into $f_\epsilon(B_n(X))$. This completes the proof.
\end{proof}

\begin{figure}[ht]
\[
f_0(X)=\vcenter{\hbox{\includegraphics{diskwithedge1.pdf}}}\subsetneq
\vcenter{\hbox{\includegraphics{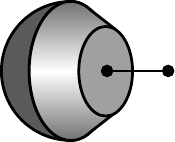}}}\subsetneq
\vcenter{\hbox{\includegraphics{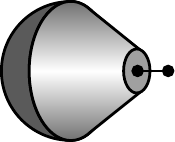}}}\subsetneq
\vcenter{\hbox{\includegraphics{diskwithedge2.pdf}}}=f_1(X)
\]
\caption{Local pictures of $f_0(X), f_{\epsilon}(X),f_{\epsilon'}(X)$ and $f_1(X)$ for $0<\epsilon<\epsilon'<1$}
\label{fig:deformation}
\end{figure}

\subsection{Simple complex}
For given $X$, we want to find a {\em simple} complex $X'$ whose braid groups are isomorphic to those on $X$. To do this, we need the following proposition which is the 2-dimensional analogue of Proposition~\ref{prop:edgecontraction}.
\begin{prop}\label{prop:graphedgecontraction}
Let $X$ be a complex of dimension $2$ and $e=(v,w)$ be an edge in $X$ with $\lk(v)=\Gamma\sqcup\{w\}$ for some connected graph $\Gamma$. 
If $\Gamma=S^1$, we assume furthermore that $w\not\in\partial X$.
Then there exist braid equivalences 
\[X\to \overline X\to\overline{X}/\bar{e}\leftarrow X/\bar e\]
for some complex $\overline X$, and therefore $X\equiv_B X/\bar e$.
\end{prop}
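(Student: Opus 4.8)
The plan is to reduce the statement to the disk-link case already settled in Proposition~\ref{prop:edgecontraction}. The only obstruction to applying that proposition directly is that $\lk(v)=\Gamma\sqcup\{w\}$ with $\Gamma$ an arbitrary connected graph rather than a disk. So I would enlarge $X$ to a complex $\overline X$ in which the link of $v$ has been thickened into a genuine disk while the isolated point $w$ is left untouched, arranging $\lk_{\overline X}(v)=D^{N-1}\sqcup\{w\}$ for some $N$. Then Proposition~\ref{prop:edgecontraction} (with $k=N$) supplies the middle braid equivalence $\overline X\to\overline X/\bar e$, and performing the same enlargement on $X/\bar e$ supplies the final arrow, so that the whole zig-zag $X\to\overline X\to\overline X/\bar e\leftarrow X/\bar e$ consists of braid equivalences.

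To construct $\overline X$, I would exploit that the cone $C(\Gamma)=\overline{\st(v)}\setminus\mathring{\bar e}$ is a cone over a graph and hence collapsible, so its combinatorial thickening is a ball. I would realize this thickening by a finite sequence of higher-cell attachments, each capping off a sphere in $\lk(v)$ by a disk in the sense of Remark~\ref{rmk:link1}; by Proposition~\ref{prop:highercell} every such attachment is a braid equivalence as long as the ambient complex is never $S^2$, which holds because $v$ stays a branch point throughout, and for any intermediate $2$-cell the branch-point hypothesis of Corollary~\ref{cor:2cell} is furnished by $v\in\br(X)$. The crucial point is that the target dimension $N$ may be taken large: a non-planar $\Gamma$ still embeds in $D^3\subset D^{N-1}$, so the resulting disk $D^{N-1}$ can contain $\Gamma$ as a subcomplex and \emph{no planarity hypothesis on $\Gamma$ is needed}. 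I would carry out all attachments in a neighborhood of $v$ disjoint from $w$ and from $\bar e$, so that $\{w\}$ survives as an isolated component of $\lk_{\overline X}(v)$ and so that the construction commutes with contracting $\bar e$.

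Granting $\overline X$, the middle equivalence is immediate from Proposition~\ref{prop:edgecontraction}. Since every cell forming $\overline X$ was attached away from $\bar e$, the identical attachments applied to $X/\bar e$ produce exactly $\overline X/\bar e$ and are again braid equivalences by Proposition~\ref{prop:highercell} and Corollary~\ref{cor:2cell}, giving $X/\bar e\to\overline X/\bar e$, and composing yields $X\equiv_B X/\bar e$. The degenerate case $\Gamma=S^1$ I would treat separately, and this is where the hypothesis $w\notin\partial X$ enters: here $C(\Gamma)=D^2$ is already a manifold disk, and capping the circle component $S^1\subset\lk(v)$ by a disk amounts to attaching a $3$-cell along the sphere $S^2=C(S^1)\cup D^2$; the assumption $w\notin\partial X$ guarantees that the merged vertex of $X/\bar e$ does not produce one of the excluded extremal configurations (an embedded $S^2$, equivalently the forbidden $(D^3,S^2)$ capping) and that a branch point persists, so that Proposition~\ref{prop:highercell} still applies.

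I expect the main obstacle to be the explicit construction of the thickening $\overline X$. One must exhibit it as an honest sequence of sphere-cappings, to which Proposition~\ref{prop:highercell} and Corollary~\ref{cor:2cell} apply, rather than as mere elementary expansions, which are only homotopy equivalences and need not respect braid groups; and one must verify the non-$S^2$ and branch-point hypotheses at every intermediate stage. The delicate bookkeeping is keeping $\{w\}$ isolated while the remainder of the link is inflated to a high-dimensional disk, all within a neighborhood of $v$ that avoids $w$ and $\bar e$ so that the construction descends verbatim to $X/\bar e$. Once that is in place, the three braid equivalences follow formally.
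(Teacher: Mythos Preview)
Your proposal is correct and follows essentially the same approach as the paper: thicken $\lk(v)$ to a disk by attaching higher cells near $v$ via Proposition~\ref{prop:highercell} and Corollary~\ref{cor:2cell} (using that $v\in\br(X)$ supplies the required branch point, and that any graph embeds in a triangulated $D^3$ so no planarity hypothesis is needed), then invoke Proposition~\ref{prop:edgecontraction} on $\overline X$, and observe that the identical attachments descend to $X/\bar e$. The paper's treatment of the $\Gamma=S^1$ case is terser than yours---it simply folds it into the general $\Gamma\neq I$ argument---but the underlying reason the hypothesis $w\notin\partial X$ is needed is the one you identify: it ensures a branch point persists at the merged vertex of $X/\bar e$, so that the cell attachments on that side are again braid equivalences.
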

\begin{proof}
Suppose $\Gamma$ is trivial. Then we set $\overline{X}$ to be $X$ itself.
If $\Gamma=I$, then this is a special case of Proposition~\ref{prop:edgecontraction}.


We assume that $\Gamma\neq I$.
Then the strategy is as follows. 

We first attach cells to $X$ near $v$ without touching $e$ to obtain $X'$ so that $X\to \overline{X}$ is a braid equivalence and $\lk(v)=D^k\sqcup\{w\}$ in $\overline{X}$ for some $k$. This can be done by Proposition~\ref{prop:highercell} and Corollary~\ref{cor:2cell} since $v$ is a branch point and it always satisfies the assumption of Corollary~\ref{cor:2cell}.
Then there exists a braid equivalence $\overline{X}\to\overline{X}/\bar{e}$ by Proposition~\ref{prop:edgecontraction}.

Notice that $\bar{X}/\bar{e}$ can be obtained from $X/\bar{e}$ by attaching cells in the exactly same ways as we did for $X$ to obtain $\overline{X}$.
Hence we have a braid equivalence $X/\bar{e}\to \overline{X}/\bar{e}$, and therefore there exist braid equivalences $X\to \overline{X}\to \overline{X}/\bar{e}\leftarrow X/\bar{e}$ as claimed.

Recall the effects of attaching cells on $\lk(v)$ as mentioned in Remark~\ref{rmk:link1} and \ref{rmk:link2}, which are capping-off along embedded spheres in $\lk(v)$.
Hence, it suffices to show that $\lk(v)$ can be transformed to $D^k$ by the iterated capping-off process, and this is actually equivalent to showing that $\lk(v)$ is a subset of the 1-skeleton $K^{(1)}$ for some simplicial complex $K$ homeomorphic to $D^k$.

Since any graph embeds into $\mathbb{R}^3$, it is always possible for $k=3$ and so is it for $k=2$ when $\Gamma$ is planar. This completes the proof.
\end{proof}

\begin{proof}[Proof of Theorem~\ref{thm:simple}]
Let $X_1$ be the 2-skeleton of a sufficiently subdivided complex $X$. Then by Corollary~\ref{cor:2skeleton}, $X\equiv_B X_1$.

Let $w\in \br(X_1)$ be a non-simple vertex.
That is, $\lk(w)$ has at least 2 graph components or only one graph component with $\val(w)\ge 3$.
Let $\Gamma$ be a graph component of $\lk(w)$ and $X_2$ be a complex having an edge $e=(v,w)$ such that $X_1=X_2/\bar e$ and $\lk(v)=\Gamma\sqcup\{w\}$.
Then $X_1\equiv_B X_2$ by Proposition~\ref{prop:graphedgecontraction}.

Since $v$ is simple, and $\val(w)$ in $X_2$ is equal to $\val(w)$ in $X_1$ but the number of graph components of $\lk(w)$ in $X_2$ is strictly less than that in $X_1$. 
Therefore by the induction on the number of graph components in nonsimple vertices, we eventualy obtain a simple complex $X'$ which is braid equivalent to $X$.
\end{proof}

\section{Decompositions and Elementary complexes}\label{sec:elementary}

Let $X$ and $Y$ be simple complexes. We define two operations on $X$ and a pair $X, Y$ as follows.

\begin{defn}[$k$-closure]
Let $X$ be a {\em connected} complex and $\mathbf{v}=\{v_1,\dots,v_k\}\subset\partial X$. A {\em $k$-closure $\widehat{(X,\mathbf{v})}$ of $X$ along $\mathbf{v}$} is a complex obtained by the mapping cone of the embedding $\mathbf{v}\to X$, and called {\em trivial} if 
$\overline{\st(v)}$ is trivial, or equivalently, $k=1$ and $\overline{\st(v)}=I$.
\end{defn}

Notice that $\widehat{(X,\mathbf{v})}$ can be also obtained by gluing $T_k$ to $X$ along $\mathbf{v}$, 
and $\widehat{(X,\mathbf{v})}=X$ if and only if it is a trivial closure. Moreover, if $\overline{\st(v)}=T_k$ for some $k\ge 1$ and $X_v$ is connected, then the closure 
of $X_v$ along $\lk(v)$ becomes $X$ itself by definition of $X_v$. 
Hence $X_v$ is a kind of the inverse of the closure operation, usually called {\em unwrapping} in graph theory. 
Especially, we denote by $\Theta_k$ {\em the closure $\widehat{(T_k,\partial T_k)}$ of $T_k$ along $\partial T_k$}, which is the union of two $T_k$'s and has two distinguished vertices $0$ and $0'$ of valency $k$.

We will suppress $\mathbf{v}$ unless any ambiguity occurs.

Let $v\in X$ be a vertex with $\overline{\st(v)}=T_k$ for some $k\ge 1$. Then we denote by $\vec v$ a vertex $v$ together with an ordering on $\lk(v)$. In order words, we may identify $\st(v)$ with the labelled $T_k$, and we regard $\lk(v)=\{v_1,\dots,v_k\}$ as an ordered $k$-tuple $(v_1,\dots,v_k)$ if $\vec v$ is given.
We call $\vec v$ {\em a vertex with ordering}.

\begin{defn}[$k$-connected sum]
Let $X$ and $Y$ be complexes, and $\vec v$ and $\vec w$ be vertices of orderings of valency $k\ge 1$ in $X$ and $Y$.
We further assume that both $X_v$ and $Y_w$ are connected.

A {\em $k$-connected sum $X\# Y$ of $(X,\vec v)$ and $(Y,\vec w)$} is a complex obtained by connecting each $v_i$ and $w_i$ via an interval $e_i$, 
and is called {\em trivial} if one of $X$ and $Y$ is $\Theta_k$.
\end{defn}

See Figure~\ref{fig:connectedsum} for a pictorial definition for connected sum.
Note that $(X,\vec v)\# (Y,\vec w)$ is a boundary wedge sum $X\vee_\partial Y$ if $k=1$, and is an ordinary connected sum if $k=2$.

\begin{figure}[ht]
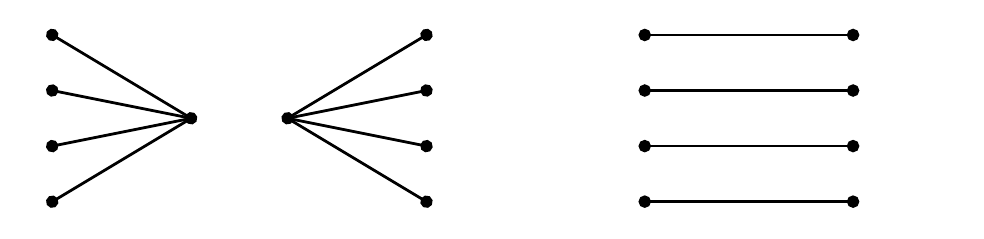
\caption{A $k$-connected sum}
\label{fig:connectedsum}
\end{figure}

Since $\Theta_k$ is a $k$-closure of $T_k$, for any vertex $v\in X$ with $\overline{\st(v)}=T_k$ for some $k\ge 1$, a $k$-connected sum $(X,\vec v)\#(\Theta_k,\vec 0)$ is nothing but a $k$-closure of $X_v$ along $\lk(v)$ and so it is $X$ itself whatever the orderings on $v$ and $0$ are. Therefore $\Theta_k$ plays the role of the identity under the $k$-connected sum.

We sometimes suppress $\vec v$ and $\vec w$ unless any ambiguity occurs, and also say that $X$ is {\em decomposed into $Y$ and $Z$ via $k$-connected sum} if $X=Y\# Z$.

Furthermore, it is easy to see that both unwrapping and connected sum decomposition reduce the first Betti number or the number of vertices of connected components. 
Therefore by continuing these operation, we eventually have components which are elementary in the sense of the following definition.

\begin{defn}[Elementary complex]
Let $X$ be a simple complex of dimension 2. We say that $X$ is {\em elementary} if $X$ can be expressed as neither a nontrivial $k$-closure nor a nontrivial $k$-connected sum.
\end{defn}

Let $X$ be an elementary complex. 
Suppose $\dim X=1$. Then elementariness forces $X$ to be a tree having at most 1 vertex of valency $k\ge 3$. Therefore $X$ is homeomorphic to $T_k$, which admits a trivial closure structure only, and so is elementary.

Suppose $\dim X=2$ and $f:X\to M$ is a simplicial embedding into a piecewise linear manifold $M$. Assume that $\dim M$ is minimal among all possible such embeddings.
Then $\dim M\le 4$ since $\dim X=2$. 

Let $N(X)$ be the closed regular neighborhood of $X$ in $M$, or equivalently, $\overline{\st(X)}$ in $M$ after sufficient barycentric subdivisions.
Hence $N(X)$ can be obtained by attaching 2 or higher dimensional cells to $X$. Roughly speaking, $N(X)$ is a {\em thickening} of $X$.
Note that $N(X)$ depends on $f$ but $\dim N(X)$ does not.

If $\dim N(X)=2$, or equivalently, $X$ can be embedded into a surface, then we claim that there is no branch point in $X$ and so $X$ itself is a surface $\Sigma=N(X)$. 

Suppose $v\in \br(X)$. If $\lk(v)$ is 0-dimensional, then $\overline{\st(v)}\simeq T_k$ for $k=\val(v)\ge 3$ and therefore $X$ can be decomposed further via a nontrivial $k$-closure or a nontrivial $\ell$-connected sum for some $\ell< k$.
This contradicts to elementariness of $X$ and so $\overline{\st(v)}$ is homeomorphic to a boundary wedge sum $D^2\vee_\partial [v,w)$ of a disk and an half-open edge $[v,w)$ since $X$ is simple and embeds into a surface. Hence either $w$ is a point of 2-closure of $X_w$ when $X_w$ is connected, or $v$ is a point of 1-closure or 1-connected sum of $X$ otherwise. This does not happen by the elementariness of $X$, and so $\overline{\st(v)}=D^2$. Therefore $v\not\in \br(X)$, and this contradiction implies that $\br(X)=\emptyset$.

For a surface $\Sigma$, we omit the group presentation of $\mathbf{B}_n(\Sigma)$ which is well-known, and actually we already introduced the result we need in Theorem~\ref{thm:surface}.

On the other hand, if $\dim N(X)\ge3$, then by the same argument as above, all vertices of $\br(X)$ are of valency 1. In this case, we call $X$ a {\em branched surface}.
Moreover, we can attach cells of dimension at least 2 to $X$ to obtain $N(X)$ so that the inclusion $X\to N(X)$ becomes a braid equivalence by Proposition~\ref{prop:highercell} and Corollary~\ref{cor:2cell}. 
This is essentially the same process as described in the proof of Theorem~\ref{thm:simple}.
Therefore
\[\mathbf{B}_n(X)\simeq\mathbf{B}_n(N(X))\simeq\prod^n\pi_1(N(X))\rtimes\mathbf{S}_n\simeq\prod^n\pi_1(X)\rtimes\mathbf{S}_n.\]

\begin{lem}\label{lem:elementary}
Let $X$ be an elementary complex. Then $X$ is either $T_k$, a surface $\Sigma$, or a branched surface.
Moreover, if $X$ is a branched surface, then 
\[\mathbf{B}_n(X)\simeq\prod^n\pi_1(X)\rtimes\mathbf{S}_n.\]
\end{lem}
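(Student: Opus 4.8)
The plan is to split by $\dim X$ and, in the $2$-dimensional case, by the dimension of a regular neighborhood. First I would dispose of $\dim X=1$. A simple graph admitting only trivial closures and connected sums must be a tree with at most one vertex of valency $\ge 3$: any interior vertex of valency $\ge 3$ would exhibit $X$ as a nontrivial $k$-closure or connected sum, and any cycle would allow unwrapping along an edge of it. Hence $X\cong T_k$ and the first alternative holds.

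For $\dim X=2$ I would fix a simplicial embedding $f\colon X\to M$ into a PL manifold $M$ of minimal dimension (so $\dim M\le 4$) and pass to the closed regular neighborhood $N(X)=\overline{\st(X)}$, whose dimension is an invariant of $X$ independent of $f$. The decisive dichotomy is whether $\dim N(X)=2$ or $\dim N(X)\ge 3$. In the case $\dim N(X)=2$ the aim is to show $\br(X)=\emptyset$, so that $X=N(X)$ is a surface. I would argue by contradiction: given $v\in\br(X)$, simplicity of $X$ together with embeddability into a surface pins $\lk(v)$ down to one of exactly two shapes, namely $0$-dimensional (so $\overline{\st(v)}\cong T_k$) or a disk with a pendant edge (so $\overline{\st(v)}\cong D^2\vee_\partial[v,w)$). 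The first shape exhibits a nontrivial closure or connected sum; the second exhibits a nontrivial $1$- or $2$-closure or $1$-connected sum at $w$. Either way elementariness is violated, forcing $\overline{\st(v)}\cong D^2$ and hence $v\notin\br(X)$.

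Finally, for $\dim N(X)\ge 3$ the same local analysis shows that every branch vertex has valency $1$, so $X$ is a branched surface. Here I would build $N(X)$ from $X$ by successively attaching cells of dimension $\ge 2$, each attachment being a braid equivalence by Proposition~\ref{prop:highercell} and Corollary~\ref{cor:2cell}; this is the mechanism already used in the proof of Theorem~\ref{thm:simple}. Since $N(X)$ is a manifold of dimension $\ge 3$ and the inclusion $X\hookrightarrow N(X)$ is a homotopy equivalence, the theorem of Birman~\cite{Bir} gives $\mathbf{B}_n(N(X))\simeq\prod^n\pi_1(N(X))\rtimes\mathbf{S}_n$, and $\pi_1(N(X))\cong\pi_1(X)$ yields the stated formula.

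The step I expect to be the main obstacle is the branch-point analysis in the surface case. One must be sure that embeddability into a surface together with simplicity genuinely forces the link of a branch vertex into precisely the two enumerated shapes, and then that each shape really produces a \emph{nontrivial} decomposition—verifying that the resulting pieces $X_v$, $X_w$ are connected and that the exhibited closure or connected sum is nontrivial is where the care lies. The $\dim N(X)\ge 3$ case is comparatively routine, the only point needing attention being that the conditions of Proposition~\ref{prop:highercell} and Corollary~\ref{cor:2cell} are met at every attachment as $N(X)$ is assembled.
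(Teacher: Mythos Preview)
Your proposal is correct and follows essentially the same route as the paper: the argument given in Section~\ref{sec:elementary} splits on $\dim X$, embeds into a PL manifold of minimal dimension, passes to the regular neighborhood $N(X)$, and then performs exactly the branch-point link analysis you describe (the $T_k$ case versus the $D^2\vee_\partial[v,w)$ case) to rule out branch points when $\dim N(X)=2$ and to force valency~$1$ when $\dim N(X)\ge 3$. The only minor discrepancy is bookkeeping in the $D^2\vee_\partial[v,w)$ case---the paper takes the $2$-closure at $w$ when $X_w$ is connected and otherwise the $1$-closure or $1$-connected sum at $v$ (not at $w$)---but this does not affect the argument.
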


\begin{ex}[An elementary, non-manifold complex $S_0$]\label{ex:S_0}
Let $S_0$ be the complex obtained by gluing a disk and an interval, depicted in Figure~\ref{fig:S_0}.
\[S_0=\{(x,y,0)|-1\le x, y\le 1\}\cup \{(0,0,z)|0\le z\le 1\}\subset \mathbb{R}^3.\]
Then it is obvious that $S_0$ is elementary and $\dim N(S_0)=3$ since $S_0$ can not be embedded into any surface. 
Hence $N(S_0)\simeq D^3$ and therefore $\mathbf{B}_n(S_0)$ is isomorphic to $\mathbf{S}_n$ via the induced permutation $\rho$.

It is not hard to see that an elementary complex $X$ embeds into a surface if and only if it does not contain $S_0$.
Furthermore, the same holds for non elementary complexes. This is easy and we will see later.
In this sense, $S_0$ is the obstruction complex for given complex to be embedded into a surface.
\end{ex}

In the following two sections, we will present the braid groups on $\widehat X$ and $X\#Y$ in terms of the braid groups on $X$ and both $X$ and $Y$, respectively.

Let $X$ and $Y$ be connected, disjoint subspaces of $Z$. Then for convenience's sake, we denote by $B_{r;s}(X;Y)$ the subspace of $B_{r+s}(Z)$ defined as
\[B_{r;s}(X;Y)=\left\{\left.\mathbf{x}\in B_{r+s}(Z)\right| \#(\mathbf{x}\cap X)=r, \#(\mathbf{x}\cap Y)=s\right\}\simeq B_r(X)\times B_s(Y).\]
Hence $\pi_1(B_{r;s}(X;Y))=\mathbf{B}_r(X)\times\mathbf{B}_s(Y)$ and is denoted by $\mathbf{B}_{r;s}(X;Y)$.

\section{\texorpdfstring{$k$}{k}-Closure of a complex}
Let $X$ be a complex and $\mathbf{v}=\{v_1,\dots,v_k\}\subset \partial X$. Let $v$ be the cone point of the $k$-closure $\widehat X$ of $X$ along $\mathbf{v}$, which is the mapping cone of $\mathbf{v}\to X$.
We denote by $e_i$ the {\em oriented} edge $(v_i,v)$ from $v_i$ to $v$ in $\widehat X$. Then for each $1\le i\le k$, the concatenation $e_1e_i^{-1}$ defines a path $\delta_i$ in $B_n(\widehat X)$ from $i_{v_1}(\mathbf{x})$ to $i_{v_i}(\mathbf{x})$ for any $\mathbf{x}\in B_{n-1}(X)$ in the obvious way, and we denote this path by $\delta_i$ again unless any ambiguity occurs. Obviously, $\delta_1$ defines a path homotopic to a constant path.

Now we endow a metric $d$ on $\widehat X$. Then there is a constant $\epsilon=\epsilon(\widehat X)$ as discussed earlier so that $d(x_i,x_j)\ge \epsilon$ for any $x_i, x_j\in\mathbf{x}\in B_n(\widehat X)$.
Then by subdividing all edges adjcent to $v$, we may assume that the diameter of $\overline{\st(v)}$ is less than $\epsilon$.
In other words, we may assume that any configuration $\mathbf{x}=B_n(\widehat X)$ intersects $\overline{\st(v)}$ at most once, that is, $\#\left(\mathbf{x}\cap\overline{\st(v)}\right)\le 1$. 

Therefore, $B_n(\widehat X)$ is separated into two subsets according to the presence of a point in $\overline{\st(v)}$, and is the union of two subspaces $B_{n-1;1}\left(X\setminus\mathbf{v};\overline{\st(v)}\right)$ and $B_n(X)$ whose intersection is 
\begin{equation}\label{eq:subspace}
B_{n-1;1}(X\setminus\mathbf{v};\mathbf{v})=\bigsqcup_{i=1}^k \left(B_{n-1}(X\setminus\mathbf{v})\times\{v_i\}\right).
\end{equation}
Notice that the intersection is not connected by the assumption that there is at most 1 point in $\overline{\st(v)}$.
Hence we need to choose paths joining components to make it connected, and make the Seifert-van Kampen theorem applicable.

To this end, we fix a basepoint $*_\ell$ of $B_\ell(X)$ for each $\ell\le n$ such that $i_{v_1}(*_{n-1})=*_n$.
Then we choose a path $\gamma_i$ in $B_n(X)$ for each $1\le i\le k$ between $*_n$ and $i_{v_i}(*_{n-1})$ such that $\gamma_i(t)$ avoids $\mathbf{v}$ for $0<t<1$ and $\bigcup_{i=1}^k \gamma_i\subset B_n(X)$ is homotopy equivalent to $T_k$.
In other words, if $\gamma_i$ and $\gamma_j$ intersect at some point, then the images of $\gamma_i$ and $\gamma_j$ must coincide from the beginning. Since $i_{v_1}(*_{n-1})=*_n$, we may assume that $\gamma_1$ is a constant path at $*_n$ for convenience sake.

In practice, the most convenient way to choose $\gamma_i$'s is as follows.
At first, we fix a set $\{\gamma^0_2,\dots,\gamma^0_k\}$ of paths in $B_n(T_k)$ as depicted in Figure~\ref{fig:gamma}.

\begin{figure}[ht]
\[
*_n=\vcenter{\hbox{\def\svgscale{0.7}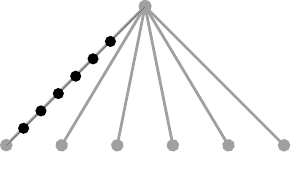}}
\xrightarrow{\gamma_i^0=\vcenter{\hbox{\def\svgscale{0.7}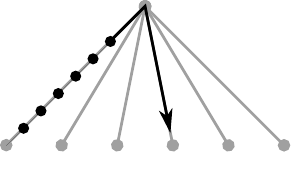}}\quad}
\vcenter{\hbox{\def\svgscale{0.7}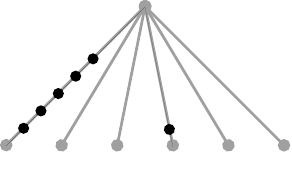}}=i_{v_i}(*_{n-1}),
\]
\caption{A path $\gamma_i^0$ for $T_k$ joining $*_n$ and $i_{v_i}(*_{n-1})$}
\label{fig:gamma}
\end{figure}

Since $X$ is connected, there exists a tree $T\subset X$ with $\partial T = \mathbf{v}$. 
Then there is a map $q:T\to T_k$ which contracts all internal edges of $T$ and induces a homotopy equivalence.
Hence similar to Proposition~\ref{prop:generaledgecontraction}, we can find a lift $\gamma_i$ for each $\gamma_i^0$. In this case, points in $*_n$ are lying near $v_1$. 

\begin{lem}\label{lem:thetasubgroup}
Let $X, \mathbf{v}$ as above. Then there exists a homomorphism
\[\Psi_{\widehat X}:\mathbf{B}_n(\Theta_k)\to\mathbf{B}_n(\widehat X).\]
\end{lem}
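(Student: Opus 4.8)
The plan is to realize $\Psi_{\widehat X}$ as a composite of two maps already furnished by the earlier results: an edge contraction on the closure of a spanning tree, followed by the inclusion of a subcomplex. The key observation is that the closure operation commutes with contracting edges disjoint from the cone, so that $\Theta_k$ arises from $\widehat X$ by first restricting to a tree and then collapsing it to a star.

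First I would fix the spanning tree $T\subset X$ with $\partial T=\mathbf{v}$ chosen above and form its $k$-closure $\widehat T=T\cup\overline{\st(v)}$, where $\overline{\st(v)}=\bigcup_i\overline{e_i}$ is exactly the cone $T_k$ adjoined in the mapping cone $\widehat X$. Since $T$ is a subcomplex of $X$, the set $\widehat T$ is a subcomplex of $\widehat X$, so the inclusion $\iota\colon\widehat T\hookrightarrow\widehat X$ is a genuine simplicial embedding and induces $\iota_*\colon\mathbf{B}_n(\widehat T)\to\mathbf{B}_n(\widehat X)$. Next, contracting all internal edges of $T$ turns $T$ into the star $T_k$ while leaving the cone edges $e_i$ untouched, so the quotient $q\colon\widehat T\to\widehat T/(\text{internal edges of }T)=\widehat{(T_k,\partial T_k)}=\Theta_k$ is an edge contraction. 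Proposition~\ref{prop:generaledgecontraction} then produces $q^*\colon\mathbf{B}_n(\Theta_k)\to\mathbf{B}_n(\widehat T)$, and I would simply set
\[\Psi_{\widehat X}=\iota_*\circ q^*\colon\mathbf{B}_n(\Theta_k)\longrightarrow\mathbf{B}_n(\widehat T)\longrightarrow\mathbf{B}_n(\widehat X).\]

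To tie this to the data assembled before the lemma, I would check that $\Psi_{\widehat X}$ carries the generators associated with the two distinguished vertices $0,0'$ of $\Theta_k$ to the expected loops in $B_n(\widehat X)$: motions through the inner star at $0$ are carried to motions through $T$ (these are precisely the lifts $\gamma_i$ of the standard paths $\gamma_i^0$, which is how $q^*$ is built by path lifting), whereas motions through the cone star at $0'$ are carried to motions through $\overline{\st(v)}$ realized by the paths $\delta_i=e_1e_i^{-1}$. This identification is not needed for mere existence but records the concrete form of $\Psi_{\widehat X}$ for later use.

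The main obstacle I anticipate is justifying the simultaneous contraction $q\colon\widehat T\to\Theta_k$, since Proposition~\ref{prop:generaledgecontraction} is stated for a single edge. I would handle this either by iterating the proposition along the internal edges of $T$, composing the resulting maps $q^*$ at each stage, or, more directly, by rerunning the path--lifting step of that proof with the single contractible arc $\bar e$ replaced by the contractible internal subtree of $T$; only contractibility of the collapsed set is used to guarantee unique lifts up to homotopy, so the argument goes through verbatim and yields a well-defined $q^*$ without any valency hypothesis. The remaining points — that $\widehat T$ is a subcomplex of $\widehat X$ and that an embedding induces a map of configuration spaces — are routine, so the composite above is a homomorphism as required.
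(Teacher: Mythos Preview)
Your proposal is correct and follows essentially the same route as the paper: form the closure $\widehat T$ of the spanning tree $T\subset X$, use Proposition~\ref{prop:generaledgecontraction} to get $q^*\colon\mathbf{B}_n(\Theta_k)\to\mathbf{B}_n(\widehat T)$ from the contraction $\widehat T\to\Theta_k$, and compose with the inclusion-induced map $\mathbf{B}_n(\widehat T)\to\mathbf{B}_n(\widehat X)$. Your explicit handling of the iterated edge contraction and the identification of generators are welcome additions that the paper leaves implicit.
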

\begin{proof}
Let $T$ be as above. Then by gluing $T_k$ to both $T$ and $T_k$, we have a map 
\[\hat q: \widehat{T}\to \Theta_k,\]
where $\widehat{T}$ is an obtained graph homotopy equivalent to $\Theta_k$.
By Proposition~\ref{prop:generaledgecontraction}, it induces a surjective map $\hat q^*:\mathbf{B}_n(\Theta_k)\to\mathbf{B}_n(\widehat{T})$.
Then the desired map $\Psi_{\widehat X}$ is just a composition of $\hat q^*$ and the map induced by the inclusion $\widehat{T}\to \widehat X$.
\end{proof}

It is important to remark that $\Psi_{\widehat X}$ is neither injective nor surjective in general by the same reason as stated in the discussion after Proposition~\ref{prop:generaledgecontraction}.

Let $\widehat{B}_{n-1;1}\left(X\setminus\mathbf{v};\overline{\st(v)}\right)=\left(\bigcup_i\gamma_i\right)\cup B_{n-1;1}\left(X\setminus\mathbf{v}; \overline{\st(v)}\right)$.
Then for each $i$, since 
\[
\gamma_i\cap B_{n-1;1}\left(X\setminus\mathbf{v};\overline{\st(v)}\right)=\{*_n, i_{v_i}(*_{n-1})\}
\]
and $*_n$ and $i_{v_i}(*_{n-1})$ is connected via the path $\delta_i^{-1}\delta_1$ in $\overline{\st(v)}$, and so $\gamma_i$ together with $\gamma_1$ defines a loop $\gamma_i\delta_i^{-1}\delta_1\gamma_1^{-1}$.
Hence $\bigcup_i\gamma_i$ contributes $(k-1)$ loops and so $(k-1)$ free letters in the fundamental group.

More precisely, let $t_i=[\gamma_i\delta_i^{-1}\delta_1\gamma_1^{-1}]$ denote the homotopy class, and we set $t_1=e$.
Then
\begin{align*}
\pi_1\left(\widehat{B}_{n-1;1}\left(X\setminus\mathbf{v};\overline{\st(v)}\right),*_n\right)&\simeq
\mathbf{B}_{n-1;1}\left(X\setminus\mathbf{v};\overline{\st(v)}\right)\ast\langle t_2,\dots,t_k\rangle\\
&\simeq\left(\mathbf{B}_{n-1}(X\setminus\mathbf{v})\times \pi_1\left(\overline{\st(v)}\right)\right)\ast\langle t_2,\dots,t_k\rangle\\
&\simeq\mathbf{B}_{n-1}(X)\ast\langle t_2,\dots,t_k\rangle.
\end{align*}

On the other hand, the intersection between $\widehat{B}_{n-1;1}\left(X\setminus\mathbf{v};\overline{\st(v)}\right)$ and $B_n(X)$ is precisely $\left(\bigcup_i\gamma_i\right)\cup B_{n-1;1}(X;\mathbf{v})$, and homotopy equivalent to a wedge sum of $k$-copies of $B_{n-1}(X)$ indexed by $v_i$'s as shown in (\ref{eq:subspace}).
Hence its fundamental group is isomorphic to $\ast^k_{i=1}\mathbf{B}_{n-1}(X)$.
Moreover, for each $i$, there are two inclusions $\widehat{\phi}_i$ and $\psi_i$ from $\mathbf{B}_{n-1}(X)$ to 
$\mathbf{B}_{n-1}(X)\ast\langle t_2,\dots,t_k\rangle$ and $\mathbf{B}_n(X)$ defined as
for all $\beta\in\mathbf{B}_{n-1}(X)$, as paths
\[\widehat{\phi}_i(\beta)=\psi_i(\beta)=\gamma_i\cdot i_{v_i}(\beta)\cdot\gamma_i^{-1}.\]
However, as group elements,
\[\widehat{\phi}_i(\beta)=t_i\beta t_i^{-1},\quad\psi_i(\beta)=v_{i*}(\beta),\]
where $v_{i*}=(\gamma_i)_*^{-1}(i_{v_i})_*$ and $(\gamma_i)_*$ is the automorphism changing the basepoint from $*_n$ to $i_{v_i}(*_{n-1})$.

Therefore we have a diagram below whose push-out defines $\mathbf{B}_n(\widehat X)$ by the Seifert-van Kampen theorem.
\begin{equation}\label{eq:pushout}
\xymatrix{
\mathbf{B}_{n-1}(X)\ast\langle t_2,\dots,t_k\rangle & &\mathbf{B}_n(X)\\
&\mathop{\ast}_{i=1}^k\mathbf{B}_{n-1}(X)\ar[lu]^{\ast_{i=1}^k\widehat{\phi}_i}\ar[ru]_{\ast_{i=1}^k\psi_i}
}
\end{equation}

Hence, 
\begin{align*}
\mathbf{B}_n(\widehat X)&=\frac{\left(\mathbf{B}_{n-1}(X)\ast\langle t_2,\dots,t_k\rangle\right)\ast\mathbf{B}_n(X)}
{\left\langle\!\!\left\langle
\widehat{\phi}_i(\beta)=\psi_i(\beta),\forall \beta\in\mathbf{B}_{n-1}(X),1\le i\le k
\right\rangle\!\!\right\rangle}\\
&=\frac{\mathbf{B}_n(X)\ast\langle t_2,\dots,t_k\rangle}
{\langle\!\langle
t_i\beta t_i^{-1}=v_{i*}(\beta),\forall \beta\in\mathbf{B}_{n-1}(X),2\le i\le k
\rangle\!\rangle}.
\end{align*}

The last equality follows by identifying $\mathbf{B}_{n-1}$ as a subgroup of $\mathbf{B}_n(X)$ via $\widehat{\phi}_1$ and $v_{1*}$.
Note that when $k=2$, then $\mathbf{B}_n(\widehat X)$ is an ordinary {\em HNN extension} of $\mathbf{B}_n(X)$ with the associated group $\mathbf{B}_{n-1}(X)$. 

\begin{thm}\label{thm:closure}
Let $X$ be a complex and $\mathbf{v}=\{v_1,\dots,v_k\}\subset \partial X$.
Then the braid group $\mathbf{B}_n(\widehat X)$ on the $k$-closure $\widehat X$ of $X$ along $\mathbf{v}$ is as follows. For $n\ge 1$,
\[\mathbf{B}_n(\widehat X)=\frac{\mathbf{B}_n(X)\ast\langle t_2,\dots,t_k\rangle}
{\langle\!\langle
t_i\beta t_i^{-1}=v_{i*}(\beta),\forall\beta\in\mathbf{B}_{n-1}(X),2\le i\le k
\rangle\!\rangle},\]
where 
\[v_{i*}=(\gamma_i)_*^{-1}(i_{v_i})_*:\mathbf{B}_{n-1}(X)\to\mathbf{B}_n(X),\]
and $\gamma_i$ is a chosen path joining basepoints of $B_n(X)$ and $i_{v_i}(B_{n-1}(X))$.
Moreover, $\mathbf{B}_{n-1}(X)$ is identified with a subgroup of $\mathbf{B}_n(X)$ via $v_{1*}$.
\end{thm}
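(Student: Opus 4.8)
The plan is to compute $\mathbf{B}_n(\widehat X)=\pi_1(B_n(\widehat X))$ directly by the Seifert--van Kampen theorem, using exactly the decomposition of $B_n(\widehat X)$ assembled above. After subdividing so that $\operatorname{diam}(\overline{\st(v)})<\epsilon$, every configuration meets $\overline{\st(v)}$ in at most one point, so $B_n(\widehat X)$ is covered by (a small open thickening of) $\widehat B_{n-1;1}(X\setminus\mathbf{v};\overline{\st(v)})$ and (a small open thickening of) $B_n(X)$, whose overlap deformation retracts onto $\left(\bigcup_i\gamma_i\right)\cup B_{n-1;1}(X\setminus\mathbf{v};\mathbf{v})$. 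First I would check that these two pieces are open, path-connected, and cover $B_n(\widehat X)$, so that the theorem is applicable.

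The key step is to identify the three fundamental groups and the two gluing maps. For the first piece, since $\overline{\st(v)}$ is a cone and hence contractible, and since deleting the boundary vertices $\mathbf{v}$ is a homotopy equivalence, one obtains $\pi_1\!\left(B_{n-1;1}(X\setminus\mathbf{v};\overline{\st(v)})\right)\simeq\mathbf{B}_{n-1}(X)$; adjoining the arcs $\gamma_i$ then contributes exactly the $k-1$ free generators $t_2,\dots,t_k$, yielding $\mathbf{B}_{n-1}(X)\ast\langle t_2,\dots,t_k\rangle$. The second piece is just $\mathbf{B}_n(X)$, and the intersection, being a wedge of $k$ copies of $B_{n-1}(X)$ indexed by the $v_i$ as in (\ref{eq:subspace}), has fundamental group $\ast_{i=1}^k\mathbf{B}_{n-1}(X)$. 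On the $i$-th factor the two inclusions act as $\widehat\phi_i(\beta)=t_i\beta t_i^{-1}$ and $\psi_i(\beta)=v_{i*}(\beta)$. Feeding this into the push-out (\ref{eq:pushout}) and imposing $\widehat\phi_i(\beta)=\psi_i(\beta)$ gives $t_i\beta t_i^{-1}=v_{i*}(\beta)$; identifying $\mathbf{B}_{n-1}(X)$ with its image under $v_{1*}$ (recall $\gamma_1$ is constant and $t_1=e$) collapses the $i=1$ relations and leaves precisely the claimed presentation, with the case $k=2$ recovering the advertised HNN extension.

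The step I expect to be the main obstacle is the correct bookkeeping of the \emph{disconnected} intersection. The naive form of Seifert--van Kampen requires a connected intersection, whereas here $B_{n-1;1}(X\setminus\mathbf{v};\mathbf{v})$ splits into $k$ components. The remedy is exactly the system of basepoint-joining arcs $\gamma_i$ chosen so that $\bigcup_i\gamma_i$ is homotopy equivalent to $T_k$: these reconnect the overlap while introducing the stable letters $t_i$, and one must verify both that they contribute no relations among themselves (being tree-like, hence free) and that the conjugation formula $\widehat\phi_i(\beta)=t_i\beta t_i^{-1}$ holds at the level of homotopy classes, tracking the passage through $\overline{\st(v)}$ along $\delta_i^{-1}\delta_1$. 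Once this identification is secured, the remaining algebraic simplification is routine.
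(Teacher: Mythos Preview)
Your proposal is correct and follows essentially the same approach as the paper: the paper's proof is precisely the Seifert--van Kampen computation preceding the theorem statement, using the decomposition of $B_n(\widehat X)$ into $\widehat B_{n-1;1}(X\setminus\mathbf{v};\overline{\st(v)})$ and $B_n(X)$, with the arcs $\gamma_i$ reconnecting the overlap and contributing the stable letters $t_i$. Your identification of the three fundamental groups, the gluing maps $\widehat\phi_i$ and $\psi_i$, and the final algebraic simplification via $v_{1*}$ all match the paper's derivation exactly.
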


Let $\partial_0 X$ be a connected component of $\partial X$ of dimension at least 1, and suppose $\{v_1,\dots, v_k\}\subset \partial_0 X$.
Then since $\st(\partial_0 X)$ is of dimension at least 2, we may choose $\gamma_i$'s in a small enough collar neighborhood of $\partial_0 X$ as depicted in Figure~\ref{fig:paths} so that they intersect pairwise only at $v_1$.
Then 
each path $\gamma_i$ can be regarded as disjoint from $B_{n-1}(X)$. 
This implies the triviality of the action $(\gamma_i)_*$ on $(i_{v_i})_*(\mathbf{B}_{n-1}(X))$. Hence $v_{i*}(\beta)=\beta$ for all $\beta\in\mathbf{B}_{n-1}(X)$, and the defining relator is nothing but the commutativity between $t_i$ and any $\beta\in\mathbf{B}_{n-1}(X)$.

\begin{cor}\label{cor:boundary}
Let $X, \mathbf{v}$ be as above. Suppose $\dim X\ge 2$ and all $v_i$'s are lying in the same component of $\partial X$. Then
\[\mathbf{B}_n(\widehat X)\simeq \mathbf{B}_n(X)\ast\langle t_2,\dots, t_k\rangle/
\langle\!\langle[\mathbf{B}_{n-1}(X), t_i],2\le i\le k\rangle\!\rangle.\]
\end{cor}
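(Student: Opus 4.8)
\section*{Proof proposal}

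The plan is to specialize Theorem~\ref{thm:closure} by making a judicious choice of the connecting paths $\gamma_i$. That theorem already gives
\[
\mathbf{B}_n(\widehat X)=\frac{\mathbf{B}_n(X)\ast\langle t_2,\dots,t_k\rangle}{\langle\!\langle t_i\beta t_i^{-1}=v_{i*}(\beta),\ \forall\beta\in\mathbf{B}_{n-1}(X),\ 2\le i\le k\rangle\!\rangle},
\]
and since the group $\mathbf{B}_n(\widehat X)$ does not depend on the choice of $\gamma_i$, it suffices to exhibit one choice for which $v_{i*}$ becomes the identity, that is, $v_{i*}(\beta)=\beta$ under the identification of $\mathbf{B}_{n-1}(X)$ as a subgroup of $\mathbf{B}_n(X)$ via $v_{1*}$. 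Granting this, each defining relator $t_i\beta t_i^{-1}=v_{i*}(\beta)$ collapses to $t_i\beta t_i^{-1}=\beta$, which is exactly the commutativity relation generating the normal closure $\langle\!\langle[\mathbf{B}_{n-1}(X),t_i]\rangle\!\rangle$.

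The geometric input is supplied by the two hypotheses. Because $\dim X\ge 2$, the star $\st(\partial_0 X)$ of the boundary component $\partial_0 X$ containing $\mathbf{v}$ is at least two-dimensional, so $\partial_0 X$ admits a collar neighborhood $C\cong\partial_0 X\times[0,1)$ inside $X$. Because all $v_i$ lie in the single component $\partial_0 X$, I can join $v_1$ to each $v_i$ by an arc inside $C$. I will take $\gamma_i$ to be the path in $B_n(X)$ that slides only the distinguished strand---the one seated at $v_1$ in $*_n=i_{v_1}(*_{n-1})$---from $v_1$ to $v_i$ along such an arc, while holding the remaining $n-1$ strands fixed at their positions in $*_{n-1}$, which lie in the bulk away from the collar. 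This meets the requirement on the configuration $\bigcup_i\gamma_i$ (see Figure~\ref{fig:paths}), since the chosen arcs intersect only at $v_1$.

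With this choice the trajectory swept out by $\gamma_i$ is confined to a thin collar and is therefore separate from the region in which a braid $\beta\in\mathbf{B}_{n-1}(X)$ moves the other $n-1$ strands. Consequently the basepoint-change automorphism $(\gamma_i)_*$ acts trivially on the image $(i_{v_i})_*(\mathbf{B}_{n-1}(X))$: the loop $\gamma_i\cdot(i_{v_i})_*(\beta)\cdot\gamma_i^{-1}$ based at $*_n$ sends the seated strand on a round trip $v_1\to v_i\to v_1$ within the collar, and since this excursion has support disjoint from that of $\beta$ it can be contracted rel the other strands without collision. Hence $v_{i*}(\beta)=(\gamma_i)_*^{-1}(i_{v_i})_*(\beta)$ equals $(i_{v_1})_*(\beta)=v_{1*}(\beta)=\beta$, giving $v_{i*}=\mathrm{id}$ and completing the reduction.

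The step I expect to be the main obstacle is precisely this disjointness claim and its upgrade to triviality of $(\gamma_i)_*$. Making it rigorous is a general-position argument: one must check that the collar-slide and the braiding $\beta$ can be realized with disjoint supports (the former in $\partial_0 X\times[0,\delta)$ for small $\delta$, the latter in the bulk, kept apart by the $\epsilon$-separation of configurations), so that the seated strand's excursion contracts without meeting the moving strands. This is exactly where both hypotheses enter---$\dim X\ge 2$ to secure a collar with room to slide, and the common boundary component to keep all the arcs inside one collar---so the crux lies in packaging these geometric facts into a clean commutation statement rather than in any algebraic manipulation.
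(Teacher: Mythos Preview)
Your proposal is correct and matches the paper's own argument essentially line for line: both specialize Theorem~\ref{thm:closure} by choosing the paths $\gamma_i$ inside a collar of the common boundary component $\partial_0 X$ so that the single moving strand stays disjoint from the other $n-1$ strands, forcing $(\gamma_i)_*$ to act trivially on $(i_{v_i})_*(\mathbf{B}_{n-1}(X))$ and hence $v_{i*}=\mathrm{id}$. Your write-up is in fact more explicit than the paper's terse paragraph about where the two hypotheses enter and why the disjoint-support claim yields the commutation.
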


\begin{figure}[ht]
\[
\widehat X=\vcenter{\hbox{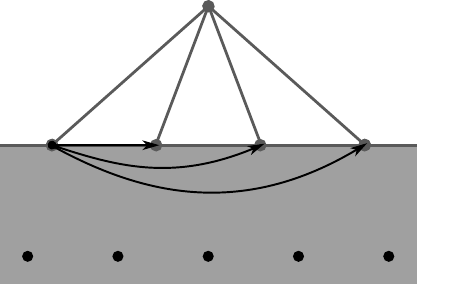}}
\]
\caption{A choice of paths $\{\gamma_i\}$ when $\{v_1,\dots,v_k\}\subset\partial_0 X$}
\label{fig:paths}
\end{figure}

\begin{rmk}\label{rmk:nonplanar}
On the other hand, for a surface $\Sigma$, if we take a closure $\widehat \Sigma$ along points not contained in a single boundary component of $\Sigma$, then $\widehat \Sigma$ is always nonplanar. 
Moreover it contains a nonplanar graph.
\end{rmk}

\begin{ex}[2 braid group on the closure of a tree]\label{ex:treeclosure}
Let $T$ be a tree with $k=\#(\partial T)$ and $\widehat T$ be the $k$-closure of $T$ along $\partial T$. Since $\mathbf{B}_1(T)$ is trivial, $\mathbf{B}_2(\widehat T)$ is a free group and admits the following presentation.
\begin{align*}
\mathbf{B}_2(\widehat T)&=\mathbf{B}_2(T)\ast \pi_1(\Theta_k)\\
&=\langle
s_{2,3},\dots,s_{k-1,k},t_2,\dots,t_k|s_{i,j} = s_{i',j'}\text{ if }(i,j)\sim_T(i',j')
\rangle.
\end{align*}
Hence the rank is $r_2(T)+(k-1)$, where $r_2(T)$ is the rank of $\mathbf{B}_2(T)$ given by the formula (\ref{eq:treerank}) in Example~\ref{ex:tree}.
\end{ex}

\begin{ex}[The braid group on $\Theta_k$]\label{ex:theta}
Since $\Theta_k=\widehat T_k$, this is a special case of the previous example.
Recall that $T_k$ produces only the trivial equivalence relation on $\binom{\partial T_k-1}2$. Therefore 
$\mathbf{B}_2(\Theta_k)$ is a free group of rank $\binom{k-1}2+(k-1)=\binom{k}2$.

Now we consider $\mathbf{B}_n(\Theta_k)$ which is generated by $t_i$'s and $\mathbf{B}_n(T_k)$. More specifically, we have a diagram
\[
\xymatrix@C=3pc{
\mathbf{B}_2(T_k)\ar[r]^{(i_{v_1})_*^{n-2}} \ar[d]_{\widehat{(\cdot)}_*} & \mathbf{B}_n(T_k) \ar[d]^{\widehat{(\cdot)}_*}\\
\mathbf{B}_2(\Theta_k)\ar[r]^{\xi}&\mathbf{B}_n(\Theta_k),
}
\]
where the vertical arrows are induced by the inclusion $\widehat{(\cdot)}:T_k\to\Theta_k$, and $\xi$ is defined by
\[
\xi(\sigma_{i,j}) = \left(\widehat{(\cdot)}_*\circ (i_{v_1})_*^{n-2} \right) (\sigma_{i,j}),\quad
\xi(t_i) = t_i\in\mathbf{B}_n(\Theta_k).
\]

Note that $\xi$ is well-defined since $\mathbf{B}_2(\Theta_k)$ is free, and commutativity is obvious by the definition of $\xi$.
\begin{lem}\cite{KP}
The map $\xi$ is surjective.
\end{lem}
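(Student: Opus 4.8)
The plan is to reduce the surjectivity of $\xi$ to a statement purely about the tree braid group $\mathbf{B}_n(T_k)$, and then to propagate the generators of $\mathbf{B}_2(T_k)$ through the point-appending maps by conjugating with the $t_i$. By Theorem~\ref{thm:closure} the group $\mathbf{B}_n(\Theta_k)$ is generated by the subgroup $\mathbf{B}_n(T_k)$ together with the letters $t_2,\dots,t_k$; since $\xi(t_i)=t_i$ by definition, it suffices to prove that $\mathbf{B}_n(T_k)\subset\im\xi$. Thus the whole problem becomes showing that every element of $\mathbf{B}_n(T_k)$ can be written using the elements $\widehat{(\cdot)}_*\circ(i_{v_1})_*^{n-2}(\sigma_{i,j})$ and the $t_i$.

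Next I would invoke the remark following Example~\ref{ex:tree}, which says that $\mathbf{B}_n(T_k)$ is generated by the images of the generators $\sigma_{i,j}$ of $\mathbf{B}_2(T_k)$ under all compositions of point-appending maps $i_{v_\ell}$ of Proposition~\ref{prop:injection}; concretely, each generator is indexed by the unique branch vertex $0$, an active pair of legs $(i,j)$, and a distribution of the remaining $n-2$ points among the legs $v_1,\dots,v_k$. The special generators in which all $n-2$ parked points sit on the leg $v_1$ are exactly $\xi(\sigma_{i,j})=\widehat{(\cdot)}_*\circ(i_{v_1})_*^{n-2}(\sigma_{i,j})$, so these already lie in $\im\xi$. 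The task is therefore to move parked points off $v_1$ onto arbitrary legs without leaving the image.

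The mechanism is the defining relation of Theorem~\ref{thm:closure}, namely $t_\ell\,(i_{v_1})_*(\beta)\,t_\ell^{-1}=v_{\ell*}(\beta)$ for $\beta\in\mathbf{B}_{n-1}(T_k)$, where $v_{\ell*}=(\gamma_\ell)_*^{-1}(i_{v_\ell})_*$ and $v_{1*}=(i_{v_1})_*$. Read geometrically, conjugation by $t_\ell$ takes a braid whose most recently appended point is parked at $v_1$ to the same braid with that point relocated to the leg $v_\ell$, the basepoint correction $(\gamma_\ell)_*$ being itself absorbed into $\im\xi$. I would then run an induction on $n$, with trivial base $n=2$ since there $\xi$ is the identity. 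Writing a general generator $g$ of $\mathbf{B}_n(T_k)$ as $(i_{v_\ell})_*(\beta)$ for a generator $\beta$ of $\mathbf{B}_{n-1}(T_k)$ and an outermost leg $v_\ell$, the inductive hypothesis places $(i_{v_1})_*(\beta)$ in $\im\xi$ — using $(i_{v_1})_*^{n-2}=(i_{v_1})_*\circ(i_{v_1})_*^{n-3}$ together with the compatibility of the $t_i$ under the appending map $\mathbf{B}_{n-1}(\Theta_k)\to\mathbf{B}_n(\Theta_k)$ — whence a single conjugation by $t_\ell$ yields $g$.

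The main obstacle I anticipate is the bookkeeping in the inductive step: the relation of Theorem~\ref{thm:closure} relocates only the single most-recently-appended point, so realizing an arbitrary distribution of the $n-2$ parked points forces one to peel them off in a prescribed order and to check that each intermediate braid stays in $\im\xi$. This in turn rests on verifying that the appending map $i_{v_1}$ carries the level-$(n-1)$ loops $t_i$ to the level-$n$ loops $t_i$ and carries the image of the level-$(n-1)$ analogue of $\xi$ into $\im\xi$, and on confirming that the basepoint-change automorphisms $(\gamma_\ell)_*$ are harmless. Once these compatibilities are established the induction closes, giving $\mathbf{B}_n(T_k)\subset\im\xi$ and hence the surjectivity of $\xi$.
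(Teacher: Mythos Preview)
Your proposal is sound and, in fact, considerably more detailed than what the paper offers: the paper gives no proof beyond the remark that ``one can prove this by drawing carefully the paths representing the generators for $\mathbf{B}_n(T_k)$ and $t_i$'s,'' together with the citation to \cite{KP}. Your algebraic argument via Theorem~\ref{thm:closure} is exactly a formalization of that path-drawing hint: the HNN-type relation $t_\ell\,v_{1*}(\beta)\,t_\ell^{-1}=v_{\ell*}(\beta)$ encodes precisely the move of taking a parked point from leg $1$ around through the closure vertex to leg $\ell$, which is what one would draw by hand.

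One small point worth tightening: the generators described in the Remark after Example~\ref{ex:tree} are indexed by raw compositions of $i_{v_\ell}$'s, whereas the relation in Theorem~\ref{thm:closure} produces $v_{\ell*}=(\gamma_\ell)_*^{-1}(i_{v_\ell})_*$, which differs by the basepoint-change conjugation. You note this, but to close the induction cleanly you should observe that the conjugating element determined by $\gamma_\ell$ lies in $\mathbf{B}_n(T_k)$ and is itself in the subgroup generated by the already-handled generators (concretely, $\gamma_\ell$ in Figure~\ref{fig:gamma} is a product of the $\sigma_{i,j}$-type moves with all parked points on leg $1$). Once that is made explicit, the inductive step goes through exactly as you outline, and the argument is complete.
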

It is not hard to prove this lemma. Indeed, one can prove this by drawing carefully the paths representing the generators for $\mathbf{B}_n(T_k)$ and $t_i$'s.
In general, for any tree $T$ with $k=\#(\partial T)$, there is a surjective homomorphism $\mathbf{B}_2(\widehat T)\to\mathbf{B}_n(\widehat T)$.
\end{ex}

\begin{rmk}
The decomposition defined above is nothing but a {\em graph-of-groups} structure for $B_n(X)$ over the graph $\Theta_k$ as follows. Note that this is essentially same as the push-out diagram in (\ref{eq:pushout}).
\[
\xymatrix@C=7pc @R=0.4pc{
&\mathbf{B}_{n-1}(X)\ar@(l,u)[lddd]_{Id}\ar@(r,u)[rddd]^{v_{1*}}\\
&\mbox{}&\\
&\mathbf{B}_{n-1}(X)\ar@/_/[ld]_-{Id}\ar@/^/[rd]^{v_{2*}}&\\
\mathbf{B}_{n-1}(X)& \mbox{} &\mathbf{B}_n(X)
\\
&\mathbf{B}_{n-1}(X)\ar@/^/[lu]^-{Id}\ar@/_/[ru]_{v_{3*}}\ar@{}[dd]|-{\vdots}&\\
&\mbox{}&\\
&\mathbf{B}_{n-1}(X)\ar@(l,d)[luuu]^{Id}\ar@(r,d)[ruuu]_{v_{k*}}
}
\]
Here each cycle involving $v_{1*}$ and $v_{i*}$ corresponds to the generator $t_i$, and we call $t_i$'s {\em stable letters} for $\mathbf{B}_n(X)$ as the ordinary HNN extension.
\end{rmk}

\section{\texorpdfstring{$k$}{k}-Connected sum of a pair of complexes}

We will use the generalized notion, called a {\em complex-of-groups}, to consider the braid group on $X\# Y$ of two given complexes $X$ and $Y$, and we briefly review about complex-of-groups. See \cite{Cor} for details.
\subsection{Complex-of-groups}
For two cells $\sigma$ and $\tau$ of a regular CW-complex, we denote by $\sigma\succ\tau$ if $\tau$ is a face of $\sigma$.
A face $\tau$ of $\sigma$ is {\em principal} if it is of codimension 1. By a {\em directed corner $\alpha$} of $\sigma$ we mean a triple $(\tau_1,\sigma,\tau_2)$ where $\tau_i$ are two different principal faces of $\sigma$ having a unique principal face $\tau_1\cap\tau_2$ in common. For a directed corner $\alpha=(\tau_1,\sigma,\tau_2)$, we denote by $\bar\alpha$ the inverse $(\tau_2,\sigma,\tau_1)$ of $\alpha$.

\begin{defn}[Complex-of-spaces]\label{def:complexofspaces}\cite{Cor}
A {\em (good) complex-of-spaces} $\mathcal{K}$ over $K$ is a CW-complex with a cellular map $p:\mathcal{K}\to K$ satisfying the conditions as follows:
\begin{enumerate}
\item for each cell $\sigma$ of $K$, there is a connected CW-complex $\mathcal{K}_\sigma$ with $p^{-1}(\sigma)\simeq \mathcal{K}_\sigma\times \sigma$;
\item for each cell $\sigma$ of $K$, the inclusion-induced map $\pi_1(\mathcal{K}_\sigma)\to\pi_1(\mathcal{K})$ is injective.
\end{enumerate}
\end{defn}

A $k$-skeleton $\mathcal{K}^{(k)}$ is defined as $p^{-1}(K^{(k)})$. Then the fundamental group $\pi_1(\mathcal{K})$ is isomorphic to $\pi_1(\mathcal{K}^{(2)})$. Moreover, there is a surjection $\pi_1(\mathcal{K}^{(1)})\to\pi_1(\mathcal{K})$ whose kernel is generated by elements corresponding to $\partial \tilde\sigma$ where $\tilde\sigma$ is a lift of 2-cell $\sigma\in K$ \cite[\S4]{Cor}.

\begin{defn}[Complex-of-groups]\label{def:complexofgroups}\cite{Cor}
A {\em complex-of-groups} $\mathcal{G}$ is a triple $(K, G, \phi)$ where
\begin{enumerate}
\item $K$ is a regular CW-complex;
\item $G$ assigns to each cell $\sigma$ of $K$ a group $G_\sigma$ and each pair $(\sigma,\tau)$ with $\sigma\succ\tau$ an injective homomorphism $i_{\sigma,\tau}:G_\sigma\to G_\tau$;
\item $\phi$ is a {\em corner labeling function} that assigns to each direct corner $\alpha=(\tau_1,\sigma,\tau_2)$ an element $\phi(\alpha)\in G_{\tau_1\cap\tau_2}$ satisfying the condition as follows:
\begin{enumerate}
\item $\phi(\bar\alpha)=\phi(\alpha)^{-1}$ for each directed corner $\alpha$;
\item If $\alpha=(\tau_1,\sigma,\tau_2)$, then the two compositions $G_\sigma\to G_{\tau_1}\to G_{\tau_1\cap\tau_2}$ and $G_\sigma\to G_{\tau_2}\to G_{\tau_1\cap\tau_2}$ differ by conjugation by $\phi(\alpha)$.
\end{enumerate}
\end{enumerate}
\end{defn}

For a complex-of-spaces $\mathcal{K}$ over $K$, an {\em associated} complex-of-groups $\mathcal{G}$ over $K$ can be defined by taking $G_\sigma=\pi_1(\mathcal{K}_\sigma)$. Note that for each $\sigma\succ\tau$, the inclusion $G_\sigma\to G_\tau$ depends on the choice of basepoints of $\mathcal{K}_\sigma$ and $\mathcal{K}_\tau$ and the choice of a path joining them. Hence, it is uniquely determined only up to inner automorphism on $G_\tau$.

A $k$-skeleton $\mathcal{G}^{(k)}$ of a complex-of-groups is nothing but a restriction on $K^{(k)}$. We say that $\mathcal{G}$ is a graph-of-groups when $K$ is 1-dimensional.

Let $\mathcal{G}$ be a graph-of-groups and $T$ be a maximal tree of $K$. We identify the generators for $\pi_1(\Gamma)$ with the set of {\em oriented} edges in $\Gamma\setminus T$. 
Let $\tilde\pi_1(\mathcal{G})$ be the free product of $\pi_1(K)$ and $\colim_T K$ of $K$ over $T$. 
Indeed, $\colim_T K$ is obtained from the {\em free product with amalgamation} of vertex groups along all edge groups.
Then we define a group $\pi_1(\mathcal{G})$ by {\em HNN extension} with all edge groups corresponding to the generators of $\pi_1(K)$.
More precisely, $\pi_1(\mathcal{G})$ is obtained by declaring $i_{e,v}(g)=e^{-1} i_{e,w}(g) e$ for all $g\in G_e$ in $\tilde\pi_1(G)$ for each edge $e=(v,w)\in \Gamma\setminus T$.

Similar to before, the fundamental group $\pi_1(\mathcal{G})$ is the same as $\pi_1(\mathcal{G}^{(2)})$ which is a quotient of $\pi_1(\mathcal{G}^{(1)})$ by elements coming from {\em corner and edge reading} for each 2-cell $\sigma$ of $K$.

For a 2-cell $\sigma$, let $\partial\sigma=e_1e_2\dots e_m$. Then the label $\phi(\sigma)$ on $\sigma$ is defined up to cyclic permutation as
\[
\phi(\sigma)=e_1\phi(\alpha_1)e_2\phi(\alpha_2)\dots e_m\phi(\alpha_m),
\]
where $\phi(\alpha_i)$ is a corner label for $\alpha_i=(e_i,\sigma,e_{i+1})$, and $e_i\in\pi_1(K^{(1)})$ is either trivial when it belongs to the maximal tree $T$ or a corresponding generator otherwise.

Remark that if a complex-of-groups $\mathcal{G}$ is associated with a complex-of-spaces $\mathcal{K}$, then $\pi_1(\mathcal{K})\simeq\pi_1(\mathcal{G})$, and so one may identify these two concepts only for the fundamental group.

\subsection{\texorpdfstring{$k$}{k}-connected sum}
Let $X$ and $Y$ be complexes, and $\vec v\in X$ and $\vec w\in Y$ be vertices of valency $k\ge 1$ with orderings $\lk(\vec v)=(v_1,\cdots,v_k)$, and $\lk(\vec w)=(w_1,\cdots, w_k)$, respectively.
We denote $(X,\vec v)\#(Y,\vec w)$ by $X\#Y$, edges $(v_i, w_i)$ by $e_i$ and $E=\cup e_i$ for simplicity.

For a given metric $d$ on $X\#Y$, by rescaling $X\#Y$ near $E$ after sufficient subdivisions, we may assume that for any configuration $\mathbf{x}\in B_n(X\#Y)$, the closure $\bar e_i$ of each $e_i$ contains at most 1 point of $\mathbf{x}$.
Then according to whether each $\bar e_i$ contains a point, $B_n(X\#Y)$ can be split as the disjoint union of subspaces indexed by (subset of) the power set of $E$ as follows.

Let $F\subset E$ be a subset of edges with $\#(F)=a\le n$, and let $\prod F$ denote the product of closures of edges contained in $F$, so that it is homeomorphic to a closed $a$-cube $D^a$. Then the spaces 
\[B_{r;s}(X_v;Y_w)\times \prod F\]
indexed by $F$ and $r,s$ with $r+s=n-a$ decompose $B_n(X\#Y)$ as desired.
We simply denote these pieces by $B_{r;s}(F)$ where $r+s=n-\#(F)$, and regard $B_{r;s}(F)$ as $\#(F)$-dimensional cube.
Then for each $e_i=(v_i,w_i)\in F$, the two maps defined by
\[i_{v_i}:B_{r;s}(F)\to B_{r+1;s}(F\setminus \{e_i\}),\quad
i_{w_i}:B_{r;s}(F)\to B_{r;s+1}(F\setminus \{e_i\})\]
correspond to two face maps of the $a$-dimensional cube $\prod F$. 
Moreover, by Proposition~\ref{prop:injection}, these induce injective homomorphisms on fundamental groups. We denote these maps by $v_{i*}$ and $w_{i*}$ for simplicity.

Hence all this information defines a complex-of-spaces $\mathcal{K}$ for $B_n(X\#Y)$ over the cube complex $K(n,k)$ depending on $n$ and $k$. Let $\mathcal{G}$ be an associated complex-of-groups with $\mathcal{K}$.
Then $\pi_1(\mathcal{K})=\mathbf{B}_n(X\#Y)=\pi_1(\mathcal{G})$.

\begin{rmk}
The dimension $\dim K$ of the cube complex $K$ is the minimum between $n$ and $k$, and moreover $K$ can be defined inductively as
\[K(n,k)=K(n,k-1)\sqcup K(n-1,k-1)\times I,\]
but this is not necessary in this paper and we omit the detail.
\end{rmk}

Since $\pi_1(\mathcal{G})$ depends only on the 2-skeleton of $K$ as mentioned before, we consider a 2-complex-of-groups $\mathcal{G}^{(2)}$ over the 2-skeleton $K^{(2)}$.

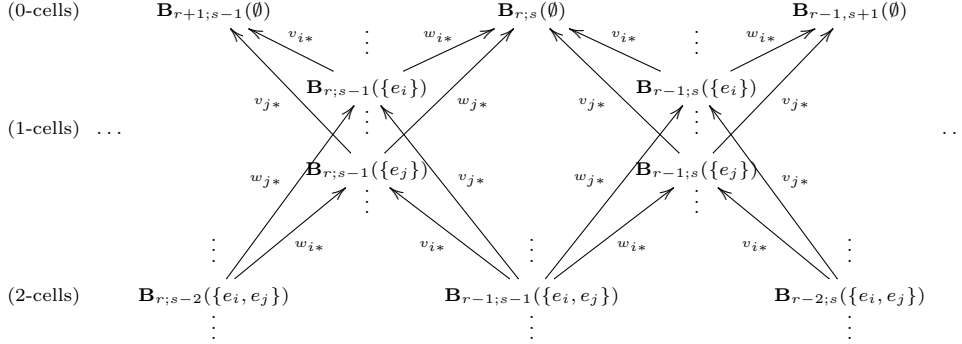
\begin{figure}[ht]
{\scriptsize\[
\xymatrix@C=0pc@R=-0.2pc{
\text{(0-cells)}& &\mathbf{B}_{r+1;s-1}(\emptyset)&& \mathbf{B}_{r;s}(\emptyset) & & \mathbf{B}_{r-1,s+1}(\emptyset)\\
& & &\vdots & & \vdots &\\
& & &\mathbf{B}_{r;s-1}(\{e_i\})\ar[ruu]^{w_{i*}}\ar[luu]_{v_{i*}} & & \mathbf{B}_{r-1;s}(\{e_i\})\ar[luu]_{v_{i*}} \ar[ruu]^{w_{i*}}\\
\text{(1-cells)}& \dots& &\vdots & & \vdots & &\dots\\
& & &\mathbf{B}_{r;s-1}(\{e_j\})\ar[ruuuu]_{w_{j*}}\ar[luuuu]^{v_{j*}} & & \mathbf{B}_{r-1;s}(\{e_j\})\ar[ruuuu]_{v_{j*}}\ar[luuuu]^{v_{j*}}\\
& & & \vdots & & \vdots &\\
& & \vdots & & \vdots & & \vdots\\
\text{(2-cells)}& &\mathbf{B}_{r;s-2}(\{e_i, e_j\})\ar[ruuuuu]^{w_{j*}}\ar[ruuu]_{w_{i*}}& &\mathbf{B}_{r-1;s-1}(\{e_i, e_j\})\ar[luuuuu]_{v_{j*}}\ar[luuu]^{v_{i*}}\ar[ruuuuu]^{w_{j*}}\ar[ruuu]_{w_{i*}} & & \mathbf{B}_{r-2;s}(\{e_i,e_j\})\ar[luuuuu]_{v_{j*}}\ar[luuu]^{v_{i*}}\\
& &\vdots & & \vdots & & \vdots
}
\]}
\caption{A complex-of-groups $\mathcal{G}$}
\label{fig:complexofgroups}
\end{figure}

\begin{figure}[ht]
{\scriptsize\[
\xymatrix@C=1pc{
\{v_i, v_j\}& e_i\cup \{v_j\}\ar[l]_{i_{v_i}}\ar[r]^{i_{w_i}} & \{w_i, v_j\}\\
\{v_i\}\cup e_j\ar[u]^{i_{v_j}}\ar[d]_{i_{w_j}}& e_i\times e_j\ar[l]_{i_{v_i}}\ar[r]^{i_{w_i}}\ar[u]^{i_{v_j}}\ar[d]_{i_{w_j}} & \{w_i\} \cup e_j\ar[u]^{i_{v_j}}\ar[d]_{i_{w_j}}\\
\{v_i, w_j\}& e_i\cup \{w_j\}\ar[l]_{i_{v_i}}\ar[r]^{i_{w_i}} & \{w_i, w_j\}\\
}\qquad
\xymatrix@C=1pc{
\mathbf{B}_{r+1;s-1}(\emptyset)& \mathbf{B}_{r;s-1}(\{e_i\})\ar[l]_{v_{i*}}\ar[r]^{w_{i*}}&\mathbf{B}_{r;s}(\emptyset)\\
\mathbf{B}_{r;s-1}(\{e_j\})\ar[u]^{v_{j*}}\ar[d]_{w_{j*}}& 
\mathbf{B}_{r-1;s-1}(\{e_i,e_j\})\ar[u]^{v_{j*}}\ar[d]_{w_{j*}}\ar[l]_{v_{i*}}\ar[r]^{w_{i*}}
&\mathbf{B}_{r-1;s}(\{e_j\})\ar[u]^{v_{j*}}\ar[d]_{w_{j*}}\\
\mathbf{B}_{r;s}(\emptyset)& \mathbf{B}_{r-1;s}(\{e_i\})\ar[l]_{v_{i*}}\ar[r]^{w_{i*}}&\mathbf{B}_{r-1;s+1}(\emptyset)
}
\]
}
\caption{A 2-cell $e_i\times e_j$ in $K$ and corresponding complex-of-groups $\mathbf{B}_{r-1;s-1}(\{e_i,e_j\})$ in $\mathcal{K}$}
\label{fig:2cell}
\end{figure}

The way to compute $\pi_1(\mathcal{G}^{(2)})$ is described earlier, and
before doing the computation, we fix basepoints $*_r$ and $*_s$ for $B_r(X_v)$ and $B_s(Y_w)$ for each $r$ and $s$, respectively. We denote $*_r\sqcup *_s$ by $*_{r;s}$.

Let $F\subset E$ with $\#(F)\le n$ and $e_1\not\in F$.
For each $r$, we glue $B_{r;s}(F)$ and $B_{r-1;s+1}(F)$ via $B_{r-1;s}(F\cup\{e_1\})$ by using paths $e_1^{r-1;s}$ from $*_{r;s}$ to $*_{r-1;s+1}$.
Then the distinction on basepoints $*_{r;s}$ for $B_{r;s}(F)$ disappears, and we denote them by $*_m$ if $m=r+s$.
Moreover, we may assume that all points of each $*_m$ are lying on $e_1$, and label them by $\{1,\dots,m\}$ with respect to the order coming from the orientation $v_1\to w_1$.

For $i\ge 2$, we choose paths $\gamma_i^m$ and $\delta_i^m$ from $*_m$ to $i_{v_i}(*_{m-1})$ and $i_{w_i}(*_{m-1})$ such that $\gamma_i^m$ moves only the first point in $X_v$, and $\delta_i^m$ moves only the last point in $Y_w$.
For convenience's sake, we set $\gamma_1^m$ and $\delta_1^m$ to be constant paths. Notice that $e_1$ also defines a constant path but has the effect of changing the domain from $B_{r;s}(F)$ to $B_{r-1;s+1}(F)$. We will suppress the decorations for $\gamma_i$ and $\delta_i$ unless any ambiguity occurs.
Note that two maps $v_{i*}$ and $w_{i*}$ are precisely $(\gamma_i)_*^{-1} (i_{v_i})_*$ and $(\delta_i)_*^{-1}(i_{w_i})_*$, respectively.

\begin{lem}
Let $(X, v)$ and $(Y,w)$ be as above. Then there exist homomorphisms
\[\Phi_X:\mathbf{B}_n(X)\to\mathbf{B}_n(X\# Y),\quad\Phi_Y:\mathbf{B}_n(Y)\to\mathbf{B}_n(X\# Y).\]
\end{lem}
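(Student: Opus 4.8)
The plan is to construct $\Phi_X$ (and, symmetrically, $\Phi_Y$) exactly as in the proof of Lemma~\ref{lem:thetasubgroup}: realize $\mathbf{B}_n(X)$ through an auxiliary subcomplex of $X\#Y$ that collapses onto a copy of $X$, and then combine the edge-contraction mechanism of Proposition~\ref{prop:generaledgecontraction} with an inclusion. Since $X_v$ and $Y_w$ are assumed connected and $\overline{\st(v)}=T_k$, the complex $X$ is precisely the $k$-closure $\widehat{X_v}$ of $X_v$ along $\{v_1,\dots,v_k\}$, so it suffices to exhibit a copy of $X$ obtained from a subcomplex of $X\#Y$ by contracting a tree.

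First I would choose, after sufficient subdivision, a tree $T\subset Y_w$ whose vertex set contains $\mathbf{w}=\{w_1,\dots,w_k\}$; such a tree exists because $Y_w$ is connected. Put $Z=X_v\cup E\cup T$, a subcomplex of $X\#Y$, where $E=\bigcup_i e_i$. Inside $Z$ the $1$-cells of $E\cup T$ lie in no $2$-cell, so each is an edge in the sense of the paper, and $E\cup T$ is a tree with leaf set $\{v_1,\dots,v_k\}$. Contracting every edge of the subtree $T$ one at a time identifies $w_1,\dots,w_k$ to a single vertex and turns $E$ into a cone $T_k$ on $\{v_i\}$; hence the contraction $q\colon Z\to Z/T$ has $Z/T$ homeomorphic to $X_v$ with $T_k$ glued along $\{v_i\}$, that is, to $\widehat{X_v}=X$.

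By iterating Proposition~\ref{prop:generaledgecontraction} over the edges of $T$, the quotient $q\colon Z\to X$ induces a homomorphism $q^*\colon\mathbf{B}_n(X)\to\mathbf{B}_n(Z)$; here I only need the existence of $q^*$, which is unconditional, and not its surjectivity, so no hypothesis on valencies is required. Composing with the map $\iota_*\colon\mathbf{B}_n(Z)\to\mathbf{B}_n(X\#Y)$ induced by the inclusion $\iota\colon Z\hookrightarrow X\#Y$, I would define
\[
\Phi_X=\iota_*\circ q^*\colon\mathbf{B}_n(X)\longrightarrow\mathbf{B}_n(X\#Y),
\]
and obtain $\Phi_Y$ by the symmetric construction, using a tree $T'\subset X_v$ spanning $\{v_i\}$ and the subcomplex $Z'=Y_w\cup E\cup T'$. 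Concretely, one checks that $\Phi_X$ carries the vertex subgroup $\mathbf{B}_n(X_v)\cong\mathbf{B}_{n;0}(\emptyset)$ into $\mathbf{B}_n(X\#Y)$ and sends each stable letter $t_i$ of the closure presentation of $\mathbf{B}_n(X)$ to the class of the loop that carries one point from $X_v$ across $e_1$, through $Y_w$, and back across $e_i$, compatibly with the maps $v_{i*}$ and $w_{i*}$ fixed above.

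The step I expect to be the main obstacle is the verification that $Z/T$ is genuinely homeomorphic to $X$ with the correct cone structure, i.e.\ that collapsing $T$ reconstitutes $\overline{\st(v)}=T_k$ and neither creates nor destroys $2$-cells along $E$. This is exactly where the hypotheses $\overline{\st(v)}=T_k$ and the connectedness of $Y_w$ enter, and where one must check that each intermediate contraction remains an edge contraction in the sense of Proposition~\ref{prop:generaledgecontraction} so that the composite $q^*$ is well defined.
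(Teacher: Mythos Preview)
Your proposal is correct and follows essentially the same approach as the paper: choose a tree in $Y_w$ spanning the points $w_i$, form the subcomplex of $X\#Y$ consisting of $X_v$ together with that tree (and the connecting edges), contract the tree via Proposition~\ref{prop:generaledgecontraction} to recover a copy of $X$, and compose with the inclusion. Your write-up is in fact slightly more careful than the paper's, since you make explicit the role of the edges $E=\bigcup_i e_i$ in the subcomplex $Z=X_v\cup E\cup T$, whereas the paper writes $X_v\cup T_Y$ and leaves the connecting edges implicit.
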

\begin{proof}
Since $X_v$ and $Y_w$ are connected, there are embedded trees $T_X$ and $T_Y$ in $X_v$ and $Y_w$, respectively, such that $\partial T_X=\ln(v)$ and $\partial T_Y=\ln(w)$.
We denote by $q_X:T_Y\to T_k$ and $q_Y:T_X\to T_k$ the label-preserving map defined in Example~\ref{ex:tree}.
Hence as described in Lemma~\ref{lem:thetasubgroup}, we have 
\[\hat q_X: X_v\cup T_Y \to X_v\cup T_k = X \text{ and }\hat q_Y: T_X\cup Y_w \to T_k\cup Y_w = Y,\]
which induce $\hat q_X^*$ and $\hat q_Y^*$ by Proposition~\ref{prop:generaledgecontraction}.
Hence we have the desired homomorphisms 
\[
\Phi_X:\mathbf{B}_n(X)\stackrel{\hat q_X^*}{\longrightarrow}\mathbf{B}_n(X_v\cup T_Y)\to\mathbf{B}_n(X\# Y)
\]
and
\[
\Phi_Y:\mathbf{B}_n(Y)\stackrel{\hat q_Y^*}{\longrightarrow}\mathbf{B}_n(T_X\cup Y_w)\to\mathbf{B}_n(X\# Y)
\]
by composing the maps induced by the inclusions $X_v\cup T_Y\to X\# Y$ and $T_X\cup Y_w\to X\#Y$.
\end{proof}

Now we can do exactly the same business as before. Let $t_{i,r}=[\gamma_i e_i^{r-1;n-r}\delta_i^{-1}]$ for $1\le r\le n$. More precisely, the action of $t_{i,r}$ on $\mathbf{B}_{r-1;s}(X_v;Y_w)$ is as follows.
\begin{equation}\label{eq:edge}
t_{i,r}^{-1} v_{i*}(\beta) t_{i,r} = w_{i*}(\beta),
\end{equation}
where for all $\beta\in\pi_1({B}_{r-1;s}(\{e_i\}))\simeq\mathbf{B}_{r-1;s}(X_v;Y_w)$ with $r+s=n$.

Therefore, 
\begin{align*}
\pi_1(\mathcal{G}^{(1)})&=\frac{\left(\mathop{\ast}_{r+s=n}\mathbf{B}_{r;s}(X_v;Y_w)\right)\ast\langle t_{i,r}|2\le i\le k, 1\le r\le n\rangle}
{\langle\!\langle
t_{i,r}^{-1} v_{i*}(\beta) t_{i,r} = w_{i*}(\beta)\quad \beta\in\mathbf{B}_{r-1;s}(X_v;Y_w)
\rangle\!\rangle}
\end{align*}

As before, we may identify $\mathbf{B}_r(X_v)$ and $\mathbf{B}_s(Y_w)$ with subgroups of $\mathbf{B}_n(X_v)$ and $\mathbf{B}_n(Y_w)$ via $v_{1*}$ and $w_{1*}$, respectively.
Then for $\alpha\in\mathbf{B}_{r-1}(X_v)$, the supports of $\alpha$ and $\delta_i$ are disjoint. Hence $w_{i*}(\alpha)=\alpha$, and similarly, $v_{i*}(\beta)=\beta$ for any $\beta\in\mathbf{B}_s(Y_w)$. That is,
\begin{align*}
\pi_1(\mathcal{G}^{(1)})&=\frac{\left(\mathop{\ast}_{r+s=n}\mathbf{B}_{r;s}(X_v;Y_w)\right)\ast\langle t_{i,r}|2\le i\le k, 1\le r\le n\rangle}
{\left\langle\!\!\!\left\langle
\begin{cases}
t_{i,r}^{-1} v_{i*}(\alpha) t_{i,r} = \alpha\quad \alpha\in\mathbf{B}_{r-1}(X_v)\\
t_{i,r}^{-1} \beta t_{i,r} = w_{i*}(\beta)\quad \beta\in\mathbf{B}_s(Y_w)
\end{cases}
\right\rangle\!\!\!\right\rangle}
\end{align*}

Moreover, this is a quotient of 
\[
\mathbf{B}_n(X_v)\ast\mathbf{B}_n(Y_w)\ast\left\langle t_{i,r}, u_{i,r}\left| u_{i,r}=t_{i,n-r}^{-1},2\le i\le k, 1\le r\le n\right.\right\rangle
\]
since $\mathbf{B}_{r;s}(X_v;Y_w)=\mathbf{B}_r(X_v)\times\mathbf{B}_s(Y_w)$ and both $\mathbf{B}_r(X_v)$ and $\mathbf{B}_s(Y_w)$ are subgroups of $\mathbf{B}_n(X_v)$ and $\mathbf{B}_n(Y_w)$, respectively.
Here, we furthermore add a set $\{u_{i,r}\}$ of dummy generators by declaring $u_{i,r}=t_{i,n-r}^{-1}$. 
Then it becomes
\begin{align*}
\pi_1(\mathcal{G}^{(1)})&=\frac{\mathbf{B}_n(X_v)\ast\mathbf{B}_n(Y_w)\ast\left\langle t_{i,r}, u_{i,r}\left| u_{i,r}=t_{i,n-r}^{-1},2\le i\le k, 1\le r\le n\right.\right\rangle}
{\left\langle\!\!\!\left\langle
\begin{cases}
[\mathbf{B}_r(X_v),\mathbf{B}_s(Y_w)]& r+s=n\\
t_{i,r}^{-1} v_{i*}(\alpha) t_{i,r} = \alpha & \alpha\in\mathbf{B}_{r-1}(X_v)\\
u_{i,s}^{-1} w_{i*}(\beta) u_{i,s} = \beta & \beta\in\mathbf{B}_{s}(Y_w)
\end{cases}
\right\rangle\!\!\!\right\rangle}.
\end{align*}

Notice that the second and third types of defining relators are those appearing in $\mathbf{B}_n(X)$ and $\mathbf{B}_n(Y)$.

Suppose $k=1$. Then $X_v\equiv_B X$, $Y_w\equiv_B Y$ and $X\# Y$ is just a boundary wedge sum $X\vee_{\partial} Y$ of $X$ and $Y$.
Since there is no $t_{i,r}$, we have 
\[
\mathbf{B}_n(X\vee_{\partial} Y)\simeq
\frac{\mathbf{B}_n(X)\ast\mathbf{B}_n(Y)}
{\left\langle\!\left\langle
[\mathbf{B}_r(X),\mathbf{B}_s(Y)], r+s=n
\right\rangle\!\right\rangle}.
\]
In this case, we have a graph-of groups as well over the linear graph of length $n$. Hence $\mathbf{B}_n(X\vee_\partial Y)$ is obtained by the iterated amalgamated free product.

On the other hand, if $n=1$, then the decoration for $t_i$ is not necessary. Therefore
\[
\pi_1(X\#Y)\simeq
\pi_1(X_v)\ast\pi_1(Y_w)\ast\langle t_2,\dots, t_k\rangle\simeq
\frac{\pi_1(X)\ast\pi_1(Y)}
{\langle\!\langle t_i u_i | 2\le i\le k
\rangle\!\rangle},
\]
where $t_i$'s and $u_i$'s in the right correspond to generators defined as before. This is nothing but the usual Seifert-van Kampen theorem.

Suppose $k, n\ge 2$ and let $F_{i,j}=\{e_i,e_j\}$ with $i<j$.
Then the boundary $\partial\left(\prod F\right)$ has 4 corners, and so we have to consider 8 maps, $\{LU,UL, UR, RU, RD, DR, DL, LD\}$ corresponding to the ways of compositions  as shown in Figure~\ref{fig:2cell}.

In the northwest corner, we need consider two maps left-and-up $LU$ and up-and-left $UL$.
Then for $r+s=n-2$, the maps $LU$ and $UL$ are the compositions
\[
LU:\mathbf{B}_{r;s}(X_v;Y_w)\xrightarrow{v_{i*}}\mathbf{B}_{r+1;s}(X_v;Y_w)\xrightarrow{v_{j*}}\mathbf{B}_{r+2;s}(X_v;Y_w)
\]
\[
UL:\mathbf{B}_{r;s}(X_v;Y_w)\xrightarrow{v_{j*}}\mathbf{B}_{r+1;s}(X_v;Y_w)\xrightarrow{v_{i*}}\mathbf{B}_{r+2;s}(X_v;Y_w),
\]
which are nothing but conjugates by two paths $\eta_1$ and $\eta_2$ from $*_n$ to $*_{n-2}\sqcup\{v_i,v_j\}$, 
where $\eta_1$ moves the first point to $v_j$ and the second point to $v_i$ but $\eta_2$ moves the first to $v_i$ and the second to $v_j$.
More precisely, $\eta_1=\gamma_j\cdot i_{v_j}(\gamma_i)$ and $\eta_2=\gamma_i\cdot i_{v_i}(\gamma_j)$, and therefore they differ by the loop $\eta_2\cdot\eta_1^{-1}$ which represents the element exactly the same as $s_{i,j}$ defined  in Example~\ref{ex:tree}, which is the image of the generator of $\mathbf{B}_2(T_3)$ via the map induced from the embedding $(T_3,(1,2,3))\to (T_X,(1,i,j))$.
Note that when $i=1$, then we set $s_{1,j}$ to be trivial for all $1<j$.

In summary, 
\[
UL(\cdot) = s_{i,j} LU(\cdot) s_{i,j}^{-1}.
\]

In the southeast corner, we have a similar result as above. That is, the two maps $RD$ for right-and-down and $DR$ for down-and-right are related as
\[
DR(\cdot)=s_{i,j}' RD(\cdot) s_{i,j}'^{-1},
\]
where $s_{i,j}'$ is the image of the generator of $\mathbf{B}_2(T_3)$ via the map induced from the embedding $(T_3,(1,2,3))\to(T_Y,(1,i,j))$, and is set to be trivial if $i=1$.

In the northeast corner, two maps $UR$ for up-and-right and $RU$ for right-and-up coincide since the supports of $\gamma_j$ and $\delta_i$ are disjoint, and so we need not conjugate for transport.

In the southwest corner, this is the exactly same situation as above and so the two maps $LD$ and $DL$ for left-and-down and down-and-left coincide.

On the other hand, the four edges of $\partial(\prod F)$ are as follows.
\begin{align*}
U: v_{i*}(\beta) \mapsto w_{i*}(\beta),\qquad &L: w_{j*}(\beta) \to v_{j*}(\beta)\qquad\forall\beta\in\mathbf{B}_{r+1;s}(X_v;Y_w),\\
R: v_{j*}(\beta) \mapsto w_{j*}(\beta),\qquad &D: w_{i*}(\beta) \to v_{i*}(\beta)\qquad\forall\beta\in\mathbf{B}_{r;s+1}(X_v;Y_w).
\end{align*}

Then as seen in the computation of $\pi_1(\mathcal{G}^{(1)})$, the maps $U,R,D$ and $L$ satisfy relations similar to (\ref{eq:edge}), which are given by
\begin{align*}
U(\cdot)=t_{i,r+2}^{-1} (\cdot) t_{i,r+2}, &\quad
L(\cdot)=u_{j,s}^{-1}(\cdot) u_{j,s}, \\
R(\cdot)=t_{j,r+1}^{-1} (\cdot) t_{j,r+1}, &\quad
D(\cdot)=u_{i,s+1}^{-1}(\cdot) u_{i,s+1}.
\end{align*}
Recall that we set $t_{1,r}=u_{1,s}$ to be trivial.

Finally, the reading of the edges and corners of $\partial[e_i,e_j]$ gives us a word
\[
H_{i,j} = s_{i,j}^{-1} t_{i,r+2} t_{j,r+1} s_{i,j}'^{-1} u_{i,s+1} u_{j,s},
\]
and $\pi_1(\mathcal{G}^{(2)})$ is obtained by declaring $H_{i,j}=e$ for all $1\le i<j\le k$.

Suppose $i=1$. Then since $r+s=n-2$ with $0\le r\le n-2$,
\[
H_{1,j}=t_{j,r+1}u_{j,s}= t_{j,r+1} t_{j,r+2}^{-1}.
\]
Therefore the defining relator $H_{1,j}=e$ removes all decorations on $t_j$'s and on $u_j$'s as well.

If $i>1$, then $H_{i,j}=e$ gives 
$s_{i,j}^{-1} t_i t_j s_{i,j}'^{-1} u_i u_j=e$, or equivalently, 
\[
s_{i,j}t_j t_i s_{i,j}' u_j u_i=e.
\]

In summary, 
\begin{equation}\label{eq:connectedsum}
\mathbf{B}_n(X\# Y)
=\frac{\mathbf{B}_n(X)\ast\mathbf{B}_n(Y)}
{\left\langle\!\!\!\left\langle
\begin{cases}
[\mathbf{B}_r(X_v),\mathbf{B}_s(Y_w)] & r+s=n\\
t_i  u_i = e& 2\le i\le k\\
s_{i,j} t_j t_i s_{i,j}' u_j u_i = e& 2\le i<j\le k
\end{cases}
\right\rangle\!\!\!\right\rangle}.
\end{equation}

Before we state the theorem, we will discuss the $s_{i,j}$'s further.
As mentioned above, both $s_{i,j}$ and $s_{i,j}'$ naturally come from $\mathbf{B}_2(T_k)$ as follows.

Let us break $\Theta_k$ into $T_k^L$ and $T_k^R$, which are left and right halves. That is, we may assume that if $\br(\Theta_k)=\{0,0'\}$,
\[T_k^L=\Theta_k\setminus \st(0),\quad T_k^R=\Theta_k\setminus \st(0').\]

We also denote the closures of these halves by $\Theta_k^L$ and $\Theta_k^R$, and denote the maps by $\widehat{(\cdot)}_{L}$ and $\widehat{(\cdot)}_{R}$.
\[\widehat{(\cdot)}_L:T_k^L\to\Theta_k^L,\quad\widehat{(\cdot)}_R:T_k^R\to\Theta_k^R.\]
Then as seen in Example~\ref{ex:theta}, 
\[\mathbf{B}_2(\Theta_k^L)=\langle \sigma_{i,j}, t_\ell\rangle,\quad
\mathbf{B}_2(\Theta_k^R)=\langle \sigma'_{i,j}, u_\ell\rangle.\]

Since $\Theta_k^L\#\Theta_k^R = \Theta_k$, we can use (\ref{eq:connectedsum}) to compute $\mathbf{B}_n(\Theta_k)$.
Notice that $s_{i,j}$ and $s_{i,j}'$ correspond to $\sigma_{i,j}$ and $\sigma_{i,j}'$, respectively since they essentially come from $\mathbf{B}_2(T_k^L)$ and $\mathbf{B}_2(T_k^R)$ by definition.
Therefore,
\[\mathbf{B}_2(\Theta_k)=\mathbf{B}_2(\Theta_k^1\# \Theta_k^2)=\langle \sigma_{i,j}, t_\ell, \sigma'_{i,j}, u_\ell |
t_\ell u_\ell =e, \sigma_{i,j}t_j t_i \sigma'_{i,j}  u_j u_i=e\rangle,\]
which is obviously isomorphic to $\mathbf{B}_2(\Theta_k^L)$ and $\mathbf{B}_2(\Theta_k^R)$.

Now we turn back to $\mathbf{B}_n(X\# Y)$.
Recall the surjective map $\xi$ described in Example~\ref{ex:theta}. In this situation, we have two surjections 
\[\xi_L:\mathbf{B}_2(\Theta_k)\to\mathbf{B}_n(\Theta_k^L),\quad
\xi_R:\mathbf{B}_2(\Theta_k)\to\mathbf{B}_n(\Theta_k^R)\]
satisfying that 
\[\widehat{(\cdot)}_{L,*}\circ(i_{v_1})_*^{n-2}=\xi_L\circ \widehat{(\cdot)}_{L,*},\quad
\widehat{(\cdot)}_{R,*}\circ(i_{w_1})_*^{n-2}=\xi_R\circ \widehat{(\cdot)}_{R,*}.\]

Then $s_{i,j}$ and $s_{i,j}'$ are nothing but the images of $\sigma_{i,j}$ and $\sigma_{i,j}'$ in $\mathbf{B}_2(\Theta_k)$ under the compositions $\Psi_X\circ\xi_L$ and $\Psi_Y\circ\xi_R$.
\[(\Psi_X\circ\xi_L)(\sigma_{i,j})=s_{i,j}\in\mathbf{B}_n(X),\quad
(\Psi_Y\circ\xi_R)(\sigma_{i,j}')=s_{i,j}'\in\mathbf{B}_n(Y).\]

Moreover, we have
\[(\Psi_X\circ\xi_L)(t_\ell)=t_\ell\in\mathbf{B}_n(X),\quad
(\Psi_Y\circ\xi_R)(u_\ell)=u_\ell\in\mathbf{B}_n(Y),\]
which correspond to the stable letters for $\mathbf{B}_n(X)$ and $\mathbf{B}_n(Y)$.

Therefore, the second and third types of defining relations in (\ref{eq:connectedsum}) precisely declare that the two images of $\mathbf{B}_2(\Theta_k)$ under $\Psi_X\circ\xi_L$ and $\Psi_Y\circ\xi_R$ are the same.
Moreover, both $\Psi_X\circ\xi_L$ and $\Psi_Y\circ\xi_R$ factor through $\xi:\mathbf{B}_2(\Theta_k)\to\mathbf{B}_n(\Theta_k)$, 
so there exist $\tilde\Psi_X$ and $\tilde\Psi_Y$ satisfying the following commutative diagram, where the innermost square involving $\tilde\Psi_X$ and $\tilde\Psi_Y$ is the push-out diagram.
\[
\xymatrix{
 &\mathbf{B}_n(\Theta_k)\ar[r]^{\Psi_X} & \mathbf{B}_n(X)\ar[rd]\ar[rrd]^{\Phi_X}\\
\mathbf{B}_2(\Theta_k)\ar@{->>}[ru]^{\xi_L}\ar@{->>}[rd]_{\xi_R}\ar@{->>}[r]^{\xi} & \mathbf{B}_n(\Theta_k)\ar[u]_{\simeq}\ar[d]^{\simeq} \ar[ru]_{\tilde\Psi_X}\ar[rd]^{\tilde\Psi_Y} & & \widetilde{\mathbf{B}}_n(X;Y)\ar@{-->}[r]^-{\exists Q}&\mathbf{B}_n(X\#Y)\\
 &\mathbf{B}_n(\Theta_k)\ar[r]_{\Psi_Y} & \mathbf{B}_n(Y)\ar[ru]\ar[rru]_{\Phi_Y}
}
\]

Hence the group $\widetilde{\mathbf{B}}_n(X;Y)$ is defined as the free product with amalgamation as follows.
\begin{align*}
\widetilde{\mathbf{B}}_n(X;Y) &= \frac{
\mathbf{B}_n(X)\ast\mathbf{B}_n(Y)}
{\langle\!\langle
\tilde\Psi_X(\beta)=\tilde\Psi_Y(\beta)|\beta\in\mathbf{B}_n(\Theta_k)
\rangle\!\rangle}\\
&=\frac{
\mathbf{B}_n(X)\ast\mathbf{B}_n(Y)}
{\left\langle\!\!\!\left\langle
\begin{cases}
t_i  u_i = e& 2\le i\le k\\
s_{i,j} t_j t_i s_{i,j}' u_j u_i = e& 2\le i< j\le k
\end{cases}
\right\rangle\!\!\!\right\rangle}.
\end{align*}

Finally, the map $Q$ is obviously taking the quotient $\widetilde{\mathbf{B}}_n(X;Y)$ by
\[\langle\!\langle[\mathbf{B}_r(X_v),\mathbf{B}_s(X_w)], r+s=n\rangle\!\rangle.\]

In summary, we have the following theorem.
\begin{thm}\label{thm:connectedsum}
Let $X, Y$ be complexes, $\vec v\in X$ and $\vec w\in Y$ be vertices of valency $k\ge 1$ with orderings $\lk(\vec v)=(v_1,\dots,v_k)$ and $\lk(\vec w)=(w_1,\dots,w_k)$, respectively. Then the braid group $\mathbf{B}_n((X,\vec v)\# (Y,\vec w))$ for $n\ge 1$ is as follows.
\begin{align*}
\mathbf{B}_n((X,\vec v)\# (Y,\vec w))
=\mathbf{B}_n(X)\ast_{\mathbf{B}_n(\Theta_k)}\mathbf{B}_n(X)
\big/\langle\!\langle[\mathbf{B}_r(X_v),\mathbf{B}_s(Y_w)], r+s=n\rangle\!\rangle
\end{align*}
for
\[
\mathbf{B}_n(X)\xleftarrow{\tilde\Psi_X}\mathbf{B}_n(\Theta_k)\xrightarrow{\tilde\Psi_Y}\mathbf{B}_n(Y),
\]
where $\tilde\Psi_X$ and $\tilde\Psi_Y$ are defined as 
\[\tilde\Psi_X\circ\xi=\Psi_X\circ\xi_L,\quad\tilde\Psi_Y\circ\xi=\Psi_Y\circ\xi_R.\]
\end{thm}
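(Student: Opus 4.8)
The plan is to package the computation carried out above into the amalgamated-product description, so the proof is really an assembly of the pieces already in hand. First I would recall the decomposition of $B_n(X\#Y)$ according to the number of configuration points lying on the connecting edges $e_i$. After sufficient subdivision and rescaling near $E=\bigcup e_i$ so that each closure $\bar e_i$ carries at most one point, this exhibits $B_n(X\#Y)$ as the total space of a complex-of-spaces $\mathcal{K}$ over the cube complex $K(n,k)$, with cells $B_{r;s}(F)\simeq B_r(X_v)\times B_s(Y_w)\times\prod F$. Since $\pi_1(\mathcal{K})=\mathbf{B}_n(X\#Y)=\pi_1(\mathcal{G})$ for the associated complex-of-groups $\mathcal{G}$, and $\pi_1$ depends only on the $2$-skeleton, it suffices to read off $\pi_1(\mathcal{G}^{(2)})$.

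Next I would compute $\pi_1(\mathcal{G}^{(1)})$. The vertex groups are $\mathbf{B}_{r;s}(X_v;Y_w)=\mathbf{B}_r(X_v)\times\mathbf{B}_s(Y_w)$, and each edge of $K$ contributes a stable letter $t_{i,r}$ subject to the HNN relation (\ref{eq:edge}). Choosing the paths $\gamma_i,\delta_i$ so that $v_{i*}$ fixes $\mathbf{B}_{r-1}(X_v)$ and $w_{i*}$ fixes $\mathbf{B}_s(Y_w)$, and identifying $\mathbf{B}_r(X_v),\mathbf{B}_s(Y_w)$ with subgroups of $\mathbf{B}_n(X_v),\mathbf{B}_n(Y_w)$ via $v_{1*},w_{1*}$, yields the displayed presentation of $\pi_1(\mathcal{G}^{(1)})$ as a quotient of $\mathbf{B}_n(X_v)\ast\mathbf{B}_n(Y_w)\ast\langle t_{i,r},u_{i,r}\rangle$. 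Then I would impose the $2$-cell relations: each square $e_i\times e_j$ produces the corner-and-edge word $H_{i,j}$, and its vanishing gives the defining relators. The crucial simplification is $H_{1,j}=e$, which, because $s_{1,j}$ and $s_{1,j}'$ are trivial, forces $t_{j,r+1}=t_{j,r+2}$ and hence strips every $r$-decoration from the stable letters; with undecorated $t_i,u_i$ the remaining relations collapse to exactly equation (\ref{eq:connectedsum}).

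The heart of the argument is the final reinterpretation. I would recognize the second and third families of relators in (\ref{eq:connectedsum}), namely $t_iu_i=e$ and $s_{i,j}t_jt_is_{i,j}'u_ju_i=e$, as precisely the amalgamation conditions over $\mathbf{B}_n(\Theta_k)$. Using the splitting $\Theta_k=\Theta_k^L\#\Theta_k^R$ and the surjections $\xi_L,\xi_R$ of Example~\ref{ex:theta}, one checks that $s_{i,j},t_\ell$ are the images of $\sigma_{i,j},t_\ell\in\mathbf{B}_2(\Theta_k)$ under $\Psi_X\circ\xi_L$, and symmetrically for $Y$. Both compositions factor through the surjection $\xi:\mathbf{B}_2(\Theta_k)\to\mathbf{B}_n(\Theta_k)$, producing well-defined homomorphisms $\tilde\Psi_X,\tilde\Psi_Y$ out of $\mathbf{B}_n(\Theta_k)$; the two relator families then say exactly that $\tilde\Psi_X(\beta)=\tilde\Psi_Y(\beta)$ for all $\beta\in\mathbf{B}_n(\Theta_k)$. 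Thus the quotient of $\mathbf{B}_n(X)\ast\mathbf{B}_n(Y)$ by these relators is the free product amalgamated over $\mathbf{B}_n(\Theta_k)$, and finally quotienting by the commutator relators $[\mathbf{B}_r(X_v),\mathbf{B}_s(Y_w)]$ delivers the stated form.

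I expect the main obstacle to be exactly this last step: verifying that $\Psi_X\circ\xi_L$ and $\Psi_Y\circ\xi_R$ descend through the common quotient $\xi$, so that $\tilde\Psi_X$ and $\tilde\Psi_Y$ are genuinely defined on $\mathbf{B}_n(\Theta_k)$ rather than merely on $\mathbf{B}_2(\Theta_k)$, and that the amalgamated product over these maps reproduces the second and third relator families without introducing or losing relations. This hinges on the surjectivity of $\xi$ (the lemma of \cite{KP}) together with the bookkeeping that $\{\sigma_{i,j},t_\ell\}$ generate $\mathbf{B}_2(\Theta_k)$ compatibly with the left and right halves; once the commuting diagram relating $\xi,\xi_L,\xi_R,\Psi_X,\Psi_Y$ is in place, the push-out identification is formal.
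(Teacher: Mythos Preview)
Your proposal is correct and follows essentially the same approach as the paper: decompose $B_n(X\#Y)$ as a complex-of-spaces over $K(n,k)$, compute $\pi_1(\mathcal{G}^{(1)})$ with stable letters $t_{i,r}$, impose the $2$-cell words $H_{i,j}$ (using $H_{1,j}=e$ to strip the $r$-decorations), and then reinterpret the resulting relators as the amalgamation over $\mathbf{B}_n(\Theta_k)$ via $\tilde\Psi_X,\tilde\Psi_Y$. One small slip: it is $w_{i*}$ that fixes $\mathbf{B}_{r-1}(X_v)$ and $v_{i*}$ that fixes $\mathbf{B}_s(Y_w)$ (since $\delta_i$ moves only a point in $Y_w$ and $\gamma_i$ only a point in $X_v$), not the other way around, but this does not affect the argument.
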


\begin{ex}[2-braid group on the union of two trees]\label{ex:twotrees}
Let $T$ and $T'$ be trees with $k=\#(\partial T)=\#(\partial T')$, and $\widehat T$ and $\widehat T'$ be $k$-closures whose closing vertices are denoted by $v$ and $w$, respectively.
We fix orderings on $\lk(v)$ and $\lk(w)$, and consider 2-braid group on $(\widehat T,\vec v)\#(\widehat T',\vec w)$.
Then by Example~\ref{ex:treeclosure} and Theorem~\ref{thm:connectedsum}, 
\[
\mathbf{B}_2(\widehat T\#\widehat T')=\left\langle
s_{i,j},s'_{i,j},t_r\left|
\begin{matrix}
s_{i,j}' =t_i^{-1}t_j^{-1}s_{i,j}^{-1}t_i t_j& 2\le i<j\le k\\
s_{i,j}=s_{i',j'}&(i,j)\sim_T(i',j')\\
s_{i,j}'=
s_{i',j'}' & (i,j)\sim_{T'}(i',j')
\end{matrix}
\right.
\right\rangle.
\]

Note that the generators $s_{i,j}'$ are not necessary by the first type of defining relators, moreover, the third reduces $s_{i,j}$'s as well. Indeed, under certain condition it has a generating set consisting of $t_r$'s and only one $s_{i,j}$.
We will see this later.
\end{ex}

\begin{rmk}
Both $k$-closures and $k$-connected sums for any $k\ge 1$ can be obtained by using iterated $2$-closures and $1$-connected sums, which always give graph-of-groups structures.
\end{rmk}

\section{Applications -- Embeddabilities and connectivities}

In this section, we will prove Theorem~\ref{thm:emb}~(2) and (3), namely, how the braid group $\mathbf{B}_n(X)$, or its abelianization $H_1(\mathbf{B}_n(X))$ is related with the geometry of $X$.
Unless mentioned otherwise, we assume that $X$ is sufficiently subdivided and simple.

\subsection{Surface embeddability}
We start with the following easy observation.
\begin{lem}\label{lem:embed}
Let $X$ and $Y$ be simple complexes. Then $X$ and $Y$ embed into surfaces if and only if so do $\widehat X$ and $\widehat Y$ if and only if so does $X\# Y$,
for arbitrary closures and connected sums.

Especially, $X$ embeds into a surface if and only if so does any elementary subcomplex of $X$.
\end{lem}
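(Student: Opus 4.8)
The plan is to reduce the whole lemma to the forbidden-subcomplex characterization of surface embeddability recorded in the introduction and (for elementary pieces) in Example~\ref{ex:S_0}: a simple complex embeds into a surface if and only if it does not contain $S_0$. Throughout I work with $2$-dimensional simple complexes, as permitted by Theorem~\ref{thm:simple} and the standing assumption of this section. Granting the characterization, the three-fold equivalence becomes a purely local statement about \emph{where a copy of $S_0$ can sit}: the key observation is that an embedded $S_0$ has a $2$-disk meeting an arc transversally at an interior branch point $p$, so $\lk(p)$ must contain $S^1\sqcup\{*\}$ and in particular a circle. Neither closure nor connected sum adds a circle to any link, so neither can manufacture a new $S_0$.

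First I would treat the closure. Since $\widehat X$ is obtained from $X$ by gluing the $1$-dimensional tree $T_k$ along $\mathbf v\subset\partial X$, every $2$-cell of $\widehat X$ already lies in $X$. I then run through the possible positions of an $S_0$-branch point $p$ in $\widehat X$: at the cone vertex $v$ the link $\{v_1,\dots,v_k\}$ is $0$-dimensional, along a new edge $e_i$ it is $S^0$, and at an attaching vertex $v_i\in\partial X$ it is $\lk_X(v_i)\sqcup\{v\}\simeq I\sqcup\{*\}$; none of these contains a circle. Hence $p$ must lie in the interior of $X$, where $\lk_{\widehat X}(p)=\lk_X(p)$, so $S_0\subset\widehat X$ if and only if $S_0\subset X$. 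By the characterization this gives that $X$ embeds into a surface $\iff\widehat X$ does, and symmetrically for $Y$ and $\widehat Y$.

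Next I carry out the same bookkeeping for $X\#Y$. Its $2$-dimensional parts are exactly those of $X_v$ and $Y_w$, joined only by the $1$-dimensional bridges $e_i$, so the disk of any embedded $S_0$ lies entirely on one side; the branch point therefore lies in $X_v$ or in $Y_w$ (possibly at an attaching vertex). A branch point in the interior of $X_v$ has the same link as in $X$, while at $v_i$ the link becomes $\lk_{X_v}(v_i)\sqcup\{w_i\}$, which contains a circle precisely when $\lk_{X_v}(v_i)$ does, i.e. precisely when $v_i$ is already an $S_0$-branch point of $X=\widehat{X_v}$ (using that $\overline{\st(v)}=T_k$). Running the symmetric analysis on the $Y$-side yields $S_0\subset X\#Y$ if and only if $S_0\subset X$ or $S_0\subset Y$, which the characterization converts into: $X\#Y$ embeds into a surface $\iff$ both $X$ and $Y$ do. Together with the previous paragraph, all the conditions in the statement are equivalent to ``$X$ and $Y$ both embed into surfaces.''

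Finally, the ``especially'' clause follows from the same circle of ideas. One implication is trivial, since a subcomplex of a surface-embeddable complex is again surface-embeddable. For the converse I argue by contraposition: if $X$ does not embed into any surface then $S_0\subset X$, and the image of this copy is an elementary subcomplex (Example~\ref{ex:S_0}) failing to embed, so some elementary subcomplex fails to embed. The main obstacle is the link bookkeeping for the connected sum: one must verify that a whisker beginning along a bridge $e_i$ cannot create an $S_0$ absent from both summands. Once the observation ``$\lk(v_i)$ acquires only the isolated point $w_i$'' is in place, every case reduces to whether $\lk_{X_v}(v_i)$ already contains a circle, and the argument closes.
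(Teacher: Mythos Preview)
Your argument is correct. The paper itself supplies no proof for this lemma (it is introduced as an ``easy observation''), so there is no line-by-line comparison to make. That said, the phrasing strongly suggests the authors have in mind the direct geometric route: both the $k$-closure and the $k$-connected sum modify the complex only by adding or deleting $1$-dimensional pieces (the tree $T_k$, respectively the edges $e_i$), so a regular neighborhood of $\widehat X$ or of $X\#Y$ is obtained from the surfaces $N(X)$, $N(Y)$ by attaching $1$-handles and is therefore again a surface; the reverse implications are immediate from $X\subset\widehat X$ and $X_v\subset X\#Y$ together with $X=\widehat{X_v}$. Your route via the forbidden subcomplex $S_0$ is equally valid and arguably tidier, since it reduces both operations to the same link bookkeeping.

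Two small remarks. First, at an attaching vertex $v_i\in\partial X$ the link $\lk_X(v_i)$ need not be $I$; it could be a point. This does not affect your conclusion, since neither $\{*\}\sqcup\{v\}$ nor $I\sqcup\{v\}$ contains a circle, but the case split should be stated. Second, be aware of a mild circularity hazard: the general characterization ``$X$ embeds into a surface iff $S_0\not\subset X$'' is asserted in the introduction as an obvious observation, but Example~\ref{ex:S_0} defers the non-elementary case to ``later,'' and the paper's own logical flow uses Lemma~\ref{lem:embed} (via Corollary~\ref{cor:torsion}) on the way to Theorem~\ref{thm:emb}(2). Your use is still legitimate, because the $S_0$ criterion can be established by a direct local argument independent of this lemma---indeed your link analysis essentially \emph{is} that argument---but it would be worth flagging this when you invoke it.
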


\begin{lem}\label{lem:torsioncombination}
Let $X$ and $Y$ be complexes.
Then the following are equivalent.
\begin{enumerate}
\item $\mathbf{B}_n(X)$ and $\mathbf{B}_n(Y)$ are torsion-free for all $n\ge 1$.
\item $\mathbf{B}_n(\widehat X)$ and $\mathbf{B}_n(\widehat Y)$ are torsion-free for all $n\ge 1$.
\item $\mathbf{B}_n(X\# Y)$ is torsion-free for any $n\ge 1$.
\end{enumerate}
\end{lem}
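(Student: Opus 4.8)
The plan is to read both the $k$-closure and the $k$-connected sum as fundamental groups of graphs-of-groups with injective edge maps, and then to invoke the standard torsion theorem of Bass--Serre theory: in the fundamental group of a graph-of-groups with injective edge monomorphisms, every vertex group embeds, and every element of finite order is conjugate into some vertex group. Hence such a group is torsion-free if and only if all of its vertex groups are. Combined with the elementary observation that a direct product $A\times B$ is torsion-free exactly when both $A$ and $B$ are, this reduces every torsion question about $\mathbf{B}_n(\widehat X)$ and $\mathbf{B}_n(X\# Y)$ to torsion questions about the braid groups of $X$ and $Y$ themselves.

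For the equivalence of (1) and (2) I would apply Theorem~\ref{thm:closure} directly. It presents $\mathbf{B}_n(\widehat X)$ as an iterated HNN extension, that is, the fundamental group of a graph-of-groups with the single vertex group $\mathbf{B}_n(X)$ and $k-1$ loop edges, where the $i$-th loop carries the stable letter $t_i$ and edge group $\mathbf{B}_{n-1}(X)$ glued in by the two maps $v_{1*}$ and $v_{i*}$. Both of these are injective by Proposition~\ref{prop:injection}, so this is a genuine graph-of-groups and the torsion theorem applies: $\mathbf{B}_n(\widehat X)$ is torsion-free iff its vertex group $\mathbf{B}_n(X)$ is, and likewise for $\widehat Y$. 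Letting $n$ range over all values yields (1)$\Leftrightarrow$(2).

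For (1)$\Leftrightarrow$(3) I would deliberately avoid the closed-form presentation of Theorem~\ref{thm:connectedsum}, since its amalgamating maps $\tilde\Psi_X,\tilde\Psi_Y$ need not be injective (recall that $\Psi_{\widehat X}$ from Lemma~\ref{lem:thetasubgroup} is in general neither injective nor surjective), so the right-hand side there is not literally an amalgamated free product and the torsion theorem does not apply to it as written. Instead I would use the complex-of-spaces structure on $B_n(X\#Y)$, whose $0$-cells are the spaces $B_r(X_v)\times B_s(Y_w)$ with $r+s=n$ and whose face maps are the point-appending maps, injective by Proposition~\ref{prop:injection}. As noted at the close of the previous section, this is realized by iterated $2$-closures and $1$-connected sums and so is the fundamental group of an honest graph-of-groups whose vertex groups are exactly the products $\mathbf{B}_r(X_v)\times\mathbf{B}_s(Y_w)$. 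The torsion theorem then makes $\mathbf{B}_n(X\#Y)$ torsion-free iff every such product, hence every factor $\mathbf{B}_r(X_v)$ and $\mathbf{B}_s(Y_w)$, is torsion-free; varying $n$, this is torsion-freeness of $\mathbf{B}_m(X_v)$ and $\mathbf{B}_m(Y_w)$ for all $m$. Since $\overline{\st(v)}=T_k$ and $\overline{\st(w)}=T_k$ give $X=\widehat{X_v}$ and $Y=\widehat{Y_w}$, the already-established equivalence (1)$\Leftrightarrow$(2) converts this into torsion-freeness of $\mathbf{B}_m(X)$ and $\mathbf{B}_m(Y)$ for all $m$, which is (1).

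The main obstacle is precisely this passage from the two-dimensional complex-of-groups that governs a $k$-connected sum to a one-dimensional graph-of-groups to which Bass--Serre torsion theory applies: one must genuinely exploit the reduction to iterated $2$-closures and $1$-connected sums, where every edge map is a point-appending injection, rather than manipulate the amalgam of Theorem~\ref{thm:connectedsum} whose amalgamating maps may collapse. Once that reduction is in hand, the remainder is bookkeeping of which product groups arise as vertex groups, together with the routine facts that products and subgroups of torsion-free groups are torsion-free.
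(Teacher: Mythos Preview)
Your proposal is correct and follows essentially the same route as the paper: both reduce $k$-closures and $k$-connected sums to iterated $2$-closures and $1$-connected sums (the remark at the end of Section~6), obtain honest graph-of-groups structures with injective edge maps, and then invoke the Bass--Serre torsion theorem. Your version is simply more explicit---in particular, your observation that the amalgamating maps $\tilde\Psi_X,\tilde\Psi_Y$ in Theorem~\ref{thm:connectedsum} need not be injective, and that one must therefore work with the iterated construction rather than the closed-form amalgam, is a genuine point that the paper's two-line proof leaves implicit.
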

\begin{proof}
As remarked in the end of the previous section, both $k$-closures and $k$-connected sums yield graph-of-groups structures, which correspond to HNN-extensions and free products with amalgamations.
The proof follows since these group operations preserve both torsions and torsion-freeness.
\end{proof}

It is worth remarking that indeed the configuration space $B_n(X)$ or $B_n(X\# Y)$ is {\em aspherical} if and only if so is $B_n(X_v)$ or so are $B_n(X)$ and $B_n(Y)$, respectively. This follows easily by considering graphs-of-spaces structures on configuration spaces.

\begin{cor}\label{cor:torsion}
Let $X$ be a complex, not necessarily elementary. Suppose $X$ can not be embedded into any surface. Then $\mathbf{B}_n(X)$ contains $\mathbf{S}_n$ as a subgroup.
\end{cor}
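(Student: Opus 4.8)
The plan is to bypass the build-up machinery of Sections~5 and~6 entirely and instead exhibit a single copy of $\mathbf{S}_n$ coming from the obstruction complex $S_0$. First I would invoke the criterion recorded at the end of Example~\ref{ex:S_0}: a complex embeds into a surface if and only if it does not contain $S_0$, and this holds for arbitrary (not necessarily elementary) complexes. Since by hypothesis $X$ does not embed into any surface, we get $S_0\subset X$; that is, after a sufficient subdivision of $X$ (which does not change $\mathbf{B}_n(X)$, as it depends only on the homeomorphism type) there is a simplicial embedding $\iota\colon S_0\hookrightarrow X$ realizing $S_0$ as a subcomplex.

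Next I would use functoriality of configuration spaces under subspace inclusions. Choosing a basepoint $*_n\in B_n(S_0)\subset B_n(X)$, the inclusion $B_n(S_0)\subset B_n(X)$ induces a homomorphism $\iota_*\colon\mathbf{B}_n(S_0)\to\mathbf{B}_n(X)$. The crucial observation is that the induced permutation $\rho$ of the exact sequence (\ref{eq:bs}) is natural with respect to such inclusions: the regular $\mathbf{S}_n$-covering $F_n(S_0)\to B_n(S_0)$ is the restriction of $F_n(X)\to B_n(X)$, so a loop in $B_n(S_0)$ permutes its $n$ strands in the same way whether read in $S_0$ or in $X$. Hence $\rho_X\circ\iota_*=\rho_{S_0}$. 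By Example~\ref{ex:S_0} we have $\mathbf{B}_n(S_0)\simeq\mathbf{S}_n$, and in fact $\rho_{S_0}\colon\mathbf{B}_n(S_0)\to\mathbf{S}_n$ is an isomorphism. Consequently $\rho_X\circ\iota_*$ is an isomorphism, which forces $\iota_*$ to be injective; its image is then a subgroup of $\mathbf{B}_n(X)$ isomorphic to $\mathbf{S}_n$, as desired. Note that $\iota_*$ simultaneously provides a section of $\rho_X$, so $\rho_X$ is split surjective.

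The one point that requires care, and the only real obstacle, is the naturality identity $\rho_X\circ\iota_*=\rho_{S_0}$. I would verify it by describing $\rho$ as the monodromy of the regular $\mathbf{S}_n$-cover $F_n(-)\to B_n(-)$ and noting that this cover over $B_n(S_0)$ is the pullback along $\iota$ of the cover over $B_n(X)$; equivalently, the classifying map $B_n(X)\to B\mathbf{S}_n$ restricts along $B_n(S_0)\hookrightarrow B_n(X)$ to the classifying map for $S_0$. Everything else is formal. As a remark, the same conclusion can be reached through the build-up process: by Theorem~\ref{thm:simple} one may take $X$ simple, Lemma~\ref{lem:embed} then locates an elementary piece of $X$ that does not embed into a surface, which by Lemma~\ref{lem:elementary} is a branched surface with braid group $\prod^n\pi_1(X_0)\rtimes\mathbf{S}_n$ visibly containing $\mathbf{S}_n$, and one checks that this $\mathbf{S}_n$ survives each closure (where the base group embeds into the HNN extension of Theorem~\ref{thm:closure}) and each connected sum; but the direct argument above is shorter and cleaner.
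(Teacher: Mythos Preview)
Your proof is correct and follows essentially the same route as the paper's: locate an embedded copy of $S_0$ inside $X$, use the naturality $\rho_X\circ\iota_*=\rho_{S_0}$, and conclude from the fact that $\rho_{S_0}$ is an isomorphism that $\iota_*$ is injective. The only cosmetic difference is that the paper first invokes Lemma~\ref{lem:embed} to pass to an elementary subcomplex before citing the $S_0$-criterion, whereas you appeal directly to the general (non-elementary) version of that criterion announced in Example~\ref{ex:S_0}.
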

\begin{proof}
By Lemma~\ref{lem:embed}, there exists an elementary subcomplex $Y$ in $X$, which does not embed into any surface.
Then as mentioned in Example~\ref{ex:S_0}, we suppose that there exists an embedding $i:S_0\to Y\subset X$.

Let $\rho$ and $\rho'$ be the induced permutations from $\mathbf{B}_n(S_0)$ and $\mathbf{B}_n(X)$, respectively.
Then it is obvious that $\rho=\rho'\circ i_*$. However, $\rho$ is an isomorphism and therefore $\rho'\circ i_*\circ\rho^{-1}$ is the identity on $\mathbf{S}_n$. In other words,
\[i_*\circ\rho^{-1}:\mathbf{S}_n\to\mathbf{B}_n(X)\]
is injective.
\end{proof}

\begin{proof}[Proof of Theorem~\ref{thm:emb}~(2)]
Suppose $X$ can be embedded into a surface $\Sigma$. Then it can be modified to a simple complex $X'$ by using only the reverse process of edge contractions as Proposition~\ref{prop:graphedgecontraction}.
Hence $X'\equiv_B X$ and embeds into $\Sigma$, and moreover, all the elementary complexes contained in $X'$ embed into surfaces by Lemma~\ref{lem:embed}, as well.
Therefore by Lemma~\ref{lem:elementary}, all their braid groups are torsion-free.

As seen in Lemma~\ref{lem:torsioncombination}, since the build-up processes preserve torsion-freeness, $\mathbf{B}_n(X)$ is torsion-free, as desired.

The converse follows from Corollary~\ref{cor:torsion}.
\end{proof}

\subsection{The first homology groups}
Suppose $n\ge 2$. Then the induced permutation $\rho$ induces the map 
\[\bar\rho:H_1(\mathbf{B}_n(X))\to H_1(\mathbf{S}_n)\simeq\mathbb{Z}_2.\]
\begin{cor}\label{cor:elementary}
Let $X$ be an elementary complex.
Then for $n\ge 2$,
\[
H_1(\mathbf{B}_n(X))\simeq\begin{cases}
\mathbb{Z}^r& X=T_k;\\
H_1(X)\oplus\langle[\sigma]\rangle & \dim X=2, X\text{ is planar};\\
H_1(X)\oplus\langle[\sigma]\rangle/\langle 2[\sigma]\rangle & otherwise,
\end{cases}
\]
where $\bar\rho([\sigma])$ is nontrivial in $H_1(\mathbf{S}_n)\simeq \mathbb{Z}_2$, and $r=r(n,k,k)$ is given by the equation $(\ref{eq:rank})$.

Especially, $H_1(\mathbf{B}_n(X))$ is torsion-free if and only if $X$ is planar.
\end{cor}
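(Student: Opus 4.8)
The plan is to run the trichotomy of Lemma~\ref{lem:elementary}: an elementary complex $X$ is either $T_k$, a surface $\Sigma$, or a branched surface, and I would compute $H_1(\mathbf{B}_n(X))$ separately in each case, pinning down the distinguished class $[\sigma]$ as I go. The three computations are of very different difficulty, and only the surface case is substantial.

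First, if $X=T_k$, then by Lemma~\ref{lem:corolla} the group $\mathbf{B}_n(T_k)$ is free of rank $r=r(n,k,k)$, so its abelianization is $\mathbb{Z}^r$; since a tree is planar, this is torsion-free, matching the first branch of the formula. Next, if $X$ is a branched surface, Lemma~\ref{lem:elementary} gives $\mathbf{B}_n(X)\simeq\prod^n\pi_1(X)\rtimes\mathbf{S}_n$ with $\mathbf{S}_n$ permuting the factors. I would abelianize this \emph{split} extension: the five-term exact sequence yields $H_1(\mathbf{B}_n(X))\simeq\left(H_1\!\left(\prod^n\pi_1(X)\right)\right)_{\mathbf{S}_n}\oplus H_1(\mathbf{S}_n)$. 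Here $H_1(\prod^n\pi_1(X))\simeq H_1(X)^{\oplus n}$ is a permutation module whose $\mathbf{S}_n$-coinvariants are the diagonal copy $H_1(X)$, while $H_1(\mathbf{S}_n)\simeq\mathbb{Z}_2$ for $n\ge 2$. Thus $H_1(\mathbf{B}_n(X))\simeq H_1(X)\oplus\mathbb{Z}_2$, the $\mathbb{Z}_2$ being generated by the image $[\sigma]$ of a transposition, for which $\bar\rho([\sigma])$ is nontrivial and $2[\sigma]=0$. Since a branched surface does not embed into any surface it is in particular nonplanar, so this lands in the third branch.

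The surface case is where I would do the real work, and it is the main obstacle: here I read the abelianization off the standard presentation of $\mathbf{B}_n(\Sigma)$ from \cite{Bel}. Abelianizing, the braid relations collapse all band generators $\sigma_i$ to a single class $[\sigma]$ with $\bar\rho([\sigma])\neq 0$, and the surface-loop generators contribute $H_1(\Sigma)$; the only place planarity intervenes is the order of $[\sigma]$. When $\Sigma$ is planar no surface relation constrains $[\sigma]$, so it spans a free $\mathbb{Z}$ and we get $H_1(\Sigma)\oplus\langle[\sigma]\rangle\simeq H_1(\Sigma)\oplus\mathbb{Z}$; when $\Sigma$ is nonplanar the surface relation abelianizes to $2[\sigma]=0$, giving $H_1(\Sigma)\oplus\langle[\sigma]\rangle/\langle2[\sigma]\rangle\simeq H_1(\Sigma)\oplus\mathbb{Z}_2$. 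The delicate point is verifying that the relators abelianize exactly this way and that $[\sigma]$ splits off as a direct summand; to avoid re-deriving the presentation I would invoke Theorem~\ref{thm:surface}(2), which already records that $H_1(\mathbf{B}_n(\Sigma))$ has torsion precisely when $\Sigma$ is nonplanar, to certify that the torsion part is exactly this $\mathbb{Z}_2$.

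Assembling the three cases gives the displayed formula. For the final assertion, observe that a planar surface is an orientable subsurface of the plane, hence has free $H_1(\Sigma)$; combined with the tree case, $H_1(\mathbf{B}_n(X))$ is torsion-free whenever $X$ is planar, whereas in every remaining (nonplanar) case the explicit $\mathbb{Z}_2=\langle[\sigma]\rangle/\langle2[\sigma]\rangle$ summand supplies torsion. Therefore $H_1(\mathbf{B}_n(X))$ is torsion-free if and only if $X$ is planar.
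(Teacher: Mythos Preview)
Your proposal is correct and follows essentially the same approach as the paper: the paper's proof is the single sentence ``This is a direct consequence of the discussion in Section~\ref{sec:elementary},'' and you have simply unpacked that discussion, running through the trichotomy of Lemma~\ref{lem:elementary} (tree, surface, branched surface) and computing $H_1(\mathbf{B}_n(X))$ in each case exactly as the paper intends. Your added detail on abelianizing the semidirect product in the branched-surface case and on reading off the surface case from the presentation in \cite{Bel} (backed by Theorem~\ref{thm:surface}) is precisely what the one-line proof is pointing to.
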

\begin{proof}
This is a direct consequence of the discussion in Section~\ref{sec:elementary}.
\end{proof}

Hence Theorem~\ref{thm:emb}~(3) holds for elementary complexes, and from now on 
we assume that $X$ is nonelementary, and furthermore $\partial X\neq\emptyset$. If $\partial X=\emptyset$, then there exists $w\in \br(X)$ such that $S_0\subset\st(w)$, and we obtain a boundary by attaching a disk in $\st(w)$ as depicted in Figure~\ref{fig:S_0'}.

Therefore we can consider a map $\mathbf{B}_1(X)\to\mathbf{B}_n(X)$ which is a composition of $(n-1)$ maps described in Proposition~\ref{prop:injection}.
This induces the map $i_{X,n}:H_1(X)\to H_1(\mathbf{B}_n(X))$, whose cokernel plays an important role in proving our theorem.

For example, since $\bar\rho$ is a surjection and $H_1(X)\subset\ker(\bar\rho)$, we have a surjection $\coker(i_{X,n})\to H_1(\mathbf{S}_n)\simeq\mathbb{Z}_2$. Therefore $\coker(i_{X,n})$ can never be trivial for any $n\ge 2$.
On the other hand, if we have an embedding $\iota:X\to Y$, then it induces a map $\iota_*:\coker(i_{X,n})\to\coker(i_{Y,n})$. Since $\bar\rho$ is equivariant, $\iota_*$ is nontrivial too.
\begin{cor}\cite{KKP}\label{cor:nonplanargraph}
Suppose $X$ contains a nonplanar graph. Then $\coker(i_{X,n})$ has $2$-torsion.
\end{cor}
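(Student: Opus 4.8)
The plan is to reduce to the graph case already established in \cite{KKP} and then transport the torsion along the inclusion using the equivariance of $\bar\rho$ set up in the discussion just above. Let $\Gamma$ be a nonplanar graph contained in $X$; after sufficient subdivision we may regard it as a simplicial subcomplex via an embedding $\iota\colon\Gamma\to X$. Since $\iota$ is compatible with the point-appending maps of Proposition~\ref{prop:injection}, it fits into a commutative square relating $i_{\Gamma,n}$ and $i_{X,n}$, and hence induces a homomorphism $\iota_*\colon\coker(i_{\Gamma,n})\to\coker(i_{X,n})$ commuting with the permutation maps, that is, $\bar\rho\circ\iota_*=\bar\rho$ where the two $\bar\rho$'s refer to $X$ and $\Gamma$ respectively.

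First I would record the graph computation. Because $\Gamma$ is nonplanar it has a vertex of valency at least $3$, so $\rho$ is surjective and $\bar\rho\colon\coker(i_{\Gamma,n})\to\mathbb{Z}_2$ is defined and onto. By \cite{KKP} the nonplanarity of $\Gamma$ forces the permutation class $[\sigma_\Gamma]$ to satisfy $2[\sigma_\Gamma]=0$ in $\coker(i_{\Gamma,n})$, in the same spirit as the nonplanar case of Corollary~\ref{cor:elementary}; moreover $\bar\rho([\sigma_\Gamma])$ is the nontrivial element of $\mathbb{Z}_2$, so in particular $[\sigma_\Gamma]\neq0$. Thus $[\sigma_\Gamma]$ is an honest element of order exactly $2$ in $\coker(i_{\Gamma,n})$.

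Next I would push this class forward. Set $t=\iota_*([\sigma_\Gamma])\in\coker(i_{X,n})$. On the one hand $2t=\iota_*(2[\sigma_\Gamma])=0$. On the other hand, by equivariance $\bar\rho(t)=\bar\rho([\sigma_\Gamma])\neq0$, so $t\neq0$. Hence $t$ is a nonzero element annihilated by $2$, which is precisely a $2$-torsion element of $\coker(i_{X,n})$, completing the proof. Note that the equivariance alone upgrades the mere ``nontriviality'' of $\iota_*$ mentioned before the corollary into the survival of a specific torsion class, which is what we need.

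The main obstacle is the input from \cite{KKP}: that for a nonplanar graph the permutation class becomes $2$-torsion in the cokernel and is still detected by $\bar\rho$. Everything after that is formal, relying only on the functoriality of $\iota_*$ and the equivariance $\bar\rho\circ\iota_*=\bar\rho$ established in the discussion preceding the corollary; the one point worth double-checking is that $\iota$ genuinely commutes with the point-appending maps, so that the induced map $\iota_*$ on cokernels is well defined and compatible with the permutation maps.
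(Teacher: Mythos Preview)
Your proposal is correct and follows essentially the same approach as the paper: invoke the \cite{KKP} result that for a nonplanar graph $\Gamma$ the class detected by $\bar\rho$ is $2$-torsion in $H_1(\mathbf{B}_n(\Gamma))$, and then transport it into $\coker(i_{X,n})$ using the $\bar\rho$-equivariance of the inclusion-induced map set up in the paragraph preceding the corollary. The paper's proof is simply a terser version of exactly this argument.
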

\begin{proof}
For a nonplanar graph $\Gamma$, it is known that $H_1(\mathbf{B}_n(\Gamma))$ has 2-torsion corresponding to the generator for $H_1(\mathbf{S}_n)$. See \cite{KKP} or Proposition~\ref{prop:connectedsum} below. Hence for any complex $X$ containing a nonplanar graph, $\coker(i_{X,n})$ has 2-torsion as discussed above.
\end{proof}

We observe how $k$-closure and $k$-connected sum affect the first homology of braid groups as follows. These two lemma are direct consequences of Theorem~\ref{thm:closure} and Theorem~\ref{thm:connectedsum}.

\begin{lem}\label{lem:exactseq1}
Let $X$ be a complex and $\widehat X$ be a $k$-closure of $X$. Then there exists a commutative diagram with exact rows as follows.
\[
\xymatrix{
1\ar[r]&H_1(X)\ar[r]\ar[d]^{i_{X,n}}&
H_1(\widehat X)\ar[r]\ar[d]^{i_{\widehat X,n}}&
H_1(\Theta_k)\ar[r]\ar[d]^{\simeq}&1\\
&H_1(\mathbf{B}_n(X))\ar[r]&
H_1(\mathbf{B}_n(\widehat X))\ar[r]&
H_1(\Theta_k)\ar[r]&1.
}
\]
\end{lem}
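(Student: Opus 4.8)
The plan is to read both rows off as abelianizations and then verify the two squares by naturality. For the top row, note that the $k$-closure $\widehat X$ is the mapping cone of $\mathbf v\hookrightarrow X$, so $\widehat X=X\cup_{\mathbf v}T_k$ with $X\cap T_k=\mathbf v$ a set of $k$ points and $T_k$ contractible. Reduced Mayer--Vietoris, using $\widetilde H_1(T_k)=0$ and $\widetilde H_0(X)=\widetilde H_0(T_k)=0$, collapses to the short exact sequence
\[0\longrightarrow H_1(X)\longrightarrow H_1(\widehat X)\longrightarrow \widetilde H_0(\mathbf v)\longrightarrow 0.\]
The same computation applied to $\Theta_k=\widehat{T_k}$ identifies $H_1(\Theta_k)\cong\widetilde H_0(\partial T_k)\cong\mathbb Z^{k-1}$, and under this identification the quotient $\widetilde H_0(\mathbf v)\cong\mathbb Z^{k-1}$ is spanned by the classes of the loops $e_1e_i^{-1}$ running out to the cone point and back, for $2\le i\le k$. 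This gives the top row.

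For the bottom row, I would abelianize the presentation of Theorem~\ref{thm:closure}. The abelianization is generated by $H_1(\mathbf B_n(X))$ together with $[t_2],\dots,[t_k]$, and each relation $t_i\beta t_i^{-1}=v_{i*}(\beta)$ becomes $[v_{1*}(\beta)]=[v_{i*}(\beta)]$ since conjugation is trivial after abelianizing; in particular every relation lies inside $H_1(\mathbf B_n(X))$ and contains no $t_i$. Hence the map $H_1(\mathbf B_n(\widehat X))\to\mathbb Z^{k-1}$ that sends $[t_i]$ to the $i$-th basis vector and kills the image of $H_1(\mathbf B_n(X))$ is well defined and surjective, with kernel exactly the image of the inclusion-induced map $H_1(\mathbf B_n(X))\to H_1(\mathbf B_n(\widehat X))$. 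Identifying $\mathbb Z^{k-1}$ with $H_1(\Theta_k)$ via $[t_i]\mapsto$ the $i$-th generator produces the asserted right-exact bottom row.

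Finally I would check that the squares commute. The left square is naturality of the point-appending composite $\mathbf B_1\to\mathbf B_n$ of Proposition~\ref{prop:injection} with respect to $X\hookrightarrow\widehat X$: both $i_{X,n}$ and $i_{\widehat X,n}$ append $n-1$ points, and the inclusion commutes with appending. For the right square, taking the right vertical arrow to be the identity on $\mathbb Z^{k-1}\cong H_1(\Theta_k)$, commutativity amounts to the single claim that $i_{\widehat X,n}$ carries the topological loop $e_1e_i^{-1}$ generating the $i$-th summand of $H_1(\widehat X)$ to the braid class $[t_i]$. I expect this matching to be the only real obstacle: it must be verified by tracing $t_i=[\gamma_i\delta_i^{-1}\delta_1\gamma_1^{-1}]$ and observing that $\delta_i$ is precisely the path swept out when one point travels along $e_1e_i^{-1}$ while the other $n-1$ points stay fixed, so that the two copies of $H_1(\Theta_k)$ are matched compatibly.
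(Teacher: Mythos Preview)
Your proposal is correct and matches the paper's own approach: the paper simply declares the lemma a direct consequence of Theorem~\ref{thm:closure}, and you carry this out explicitly by abelianizing that presentation for the bottom row, while adding the Mayer--Vietoris computation for the top row and the square checks that the paper leaves implicit. The one point worth tightening is your final paragraph: the loop in $\widehat X$ representing the $i$-th generator is $\eta_i\cdot e_i\cdot e_1^{-1}$ for some path $\eta_i$ in $X$ from $v_1$ to $v_i$, and its image under $i_{\widehat X,n}$ differs from $t_i=[\gamma_i\delta_i^{-1}]$ only by the loop $\gamma_i\cdot(\text{one point along }\eta_i)^{-1}$, which lies in $\mathbf{B}_n(X)$ and hence dies in the quotient --- this is the content of your last sentence, but stating it this way makes the commutativity transparent.
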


Note that since $H_1(\Theta_k)\simeq\mathbb{Z}^{k-1}$ is free abelian, the surjective map in each row splits.

\begin{cor}\label{cor:closures}
Let $Y$ be an elementary complex of dimension 2. Suppose $X$ is obtained by taking closures several times of $Y$.
Then 
\[
H_1(\mathbf{B}_n(X))=\begin{cases}
H_1(X)\oplus\mathbb{Z}& X\text{ is planar};\\
H_1(X)\oplus\mathbb{Z}_2& otherwise.
\end{cases}
\]
\end{cor}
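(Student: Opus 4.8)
The plan is to induct on the number $m$ of closures used to build $X$ from $Y$, carrying along two statements at once: that $i_{X,n}\colon H_1(X)\to H_1(\mathbf{B}_n(X))$ is a split injection, so that $H_1(\mathbf{B}_n(X))\cong H_1(X)\oplus\coker(i_{X,n})$, and that $\coker(i_{X,n})$ is infinite cyclic when $X$ is planar and $\mathbb{Z}_2$ otherwise. The base case $m=0$ is exactly Corollary~\ref{cor:elementary}, where $\coker(i_{Y,n})=\langle[\sigma]\rangle$ is $\mathbb{Z}$ or $\mathbb{Z}_2$ according to planarity and $\bar\rho([\sigma])$ generates $H_1(\mathbf{S}_n)\simeq\mathbb{Z}_2$. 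The guiding principle is that $\coker(i_{X,n})$ is always cyclic, generated by this sign class $[\sigma]$, and always surjects onto $\mathbb{Z}_2$ via $\bar\rho$; so the whole content is the order of $[\sigma]$.

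For the inductive step I would write $X=\widehat{X'}$ with $X'$ built from $Y$ by $m-1$ closures, and feed the $k$-closure into the commutative ladder of Lemma~\ref{lem:exactseq1}. Both rows split because $H_1(\Theta_k)\simeq\mathbb{Z}^{k-1}$ is free, giving $H_1(X)\cong H_1(X')\oplus\mathbb{Z}^{k-1}$ and $H_1(\mathbf{B}_n(X))\cong\operatorname{im}(f)\oplus\mathbb{Z}^{k-1}$, where $f$ is the lower-left horizontal map. A diagram chase then identifies
\[
\coker(i_{X,n})\;\cong\;\coker(i_{X',n})/\bar R\;=\;T'/\bar R,
\]
where $T'=\coker(i_{X',n})$ and $\bar R$ is the image in $T'$ of $\ker f$. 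Reading $\ker f$ off the abelianized presentation of Theorem~\ref{thm:closure}, it is generated by the classes $[\beta]-[v_{i*}(\beta)]$ for $\beta\in\mathbf{B}_{n-1}(X')$; since appending a fixed point alters no permutation, each such class lies in $\ker\bar\rho$. Hence $\bar R=0$ when $T'\simeq\mathbb{Z}_2$, and $\bar R\subseteq 2\mathbb{Z}$ when $T'\simeq\mathbb{Z}$.

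It then remains to pin down $\bar R$. If $X'$ is nonplanar then so is $X\supseteq X'$, and by induction $T'\simeq\mathbb{Z}_2$, whence $\bar R=0$ and $\coker(i_{X,n})\simeq\mathbb{Z}_2$. If $X'$ is planar, so $T'\simeq\mathbb{Z}$, then by Lemma~\ref{lem:elementary} the planar branch of the induction keeps $X'$ a surface. When the closure is along a single boundary component, Corollary~\ref{cor:boundary} gives $v_{i*}(\beta)=\beta$, so $\ker f$ abelianizes to $0$, $\bar R=0$, $\coker(i_{X,n})\simeq\mathbb{Z}$, and $X$ stays a planar surface. Otherwise $\mathbf{v}$ meets at least two boundary components; by Remark~\ref{rmk:nonplanar} the complex $X$ is then nonplanar and contains a nonplanar graph, so Corollary~\ref{cor:nonplanargraph} forces $\coker(i_{X,n})$ to have $2$-torsion, whence $\bar R\neq0$, i.e.\ $\bar R=2\ell\mathbb{Z}$ for some $\ell\ge1$. (This also shows that in the planar branch no closure can be across distinct boundary components, since that would introduce torsion.)

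The main obstacle is showing $\ell=1$, equivalently $2[\sigma]=0$ \emph{exactly}. The mere presence of a nonplanar graph only guarantees that $[\sigma]$ has even order a priori, so I must produce the relation $2[\sigma']\in\bar R$ directly: geometrically, the band created by closing across distinct boundary components carries a one-sided loop, and dragging a full twist of two strands once around it returns the strands with reversed roles, exhibiting $2[\sigma]$ as a combination of geometric classes lying in $\operatorname{im}(i_{X,n})$. Concretely this means selecting a convenient $\beta$ and computing $[\beta]-[v_{i*}(\beta)]=2[\sigma']$ modulo $\operatorname{im}(i_{X',n})$ from the path data $\gamma_i$ of Theorem~\ref{thm:closure}. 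Establishing this single identity is the delicate computational heart of the argument; with it, $\bar R=2\mathbb{Z}$, so $\coker(i_{X,n})\simeq\mathbb{Z}_2$, completing the induction.
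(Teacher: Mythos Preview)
Your inductive strategy is heavier than the paper's and contains a genuine gap in the planar branch. The paper does not induct on the number of closures; iterating Lemma~\ref{lem:exactseq1} gives at once a surjection $\coker(i_{Y,n})\twoheadrightarrow\coker(i_{X,n})$, so the latter is cyclic. The planar case is then settled not by analysing $\bar R$ but by embedding $\iota:X\hookrightarrow\mathbb{R}^2$: the induced $\iota_*:\coker(i_{X,n})\to\coker(i_{\mathbb{R}^2,n})\simeq\mathbb{Z}$ is nontrivial, and a cyclic group admitting a nontrivial homomorphism to $\mathbb{Z}$ must itself be $\mathbb{Z}$. For $Y$ nonplanar the source is already $\mathbb{Z}_2$; for $Y$ planar and $X$ nonplanar, the paper finds a nonplanar graph in $X$ and cites Corollary~\ref{cor:nonplanargraph}.

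The gap in your argument is the claim that ``by Lemma~\ref{lem:elementary} the planar branch of the induction keeps $X'$ a surface.'' Lemma~\ref{lem:elementary} concerns elementary complexes, and $X'$ ceases to be elementary after any nontrivial closure: already the $3$-closure of a disk is planar but not a surface. Worse, your dichotomy ``single versus multiple boundary components'' fails once $X'$ is not a surface. For example, after attaching a tripod and then a further arc to a disk, one can close along points lying on \emph{distinct} components of $\partial X'$ and still obtain a planar result, so neither Corollary~\ref{cor:boundary} nor Remark~\ref{rmk:nonplanar} controls the next step in the way you assert. The paper's embedding trick bypasses all of this. As for your ``main obstacle'' $\ell=1$: the paper avoids it by never computing $\bar R$; its appeal to Corollary~\ref{cor:nonplanargraph} is terse on precisely this point, but that is the route the paper takes rather than the explicit band computation you propose.
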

\begin{proof}
By Lemma~\ref{lem:exactseq1}, the map $\coker(i_{Y,n})\to\coker(i_{X,n})$ is surjective, and Corollary~\ref{cor:elementary} implies that $\coker(i_{Y,n})$ is either $\mathbb{Z}$ or $\mathbb{Z}_2$.

If $Y$ is nonplanar, then $X$ is nonplanar and $\coker(i_{Y,n})\simeq\mathbb{Z}_2$. Hence so is $\coker(i_{X,n})$ by above.

Suppose $Y$ is planar but $X$ is nonplanar. Then since $Y$ is a surface, $X$ contains a nonplanar graph as mentioned in Remark~\ref{rmk:nonplanar}. Hence $\coker(i_{X,n})\simeq\mathbb{Z}_2$ by Corollary~\ref{cor:nonplanargraph}.

Suppose $X$ is planar, and consider the map $\iota_*:\coker(i_{X,n})\to \coker(i_{\mathbb{R}^2,n})\simeq\mathbb{Z}$ induced by the embedding $\iota:X\to\mathbb{R}^2$. Since $\iota_*$ is nontrivial and $\coker(i_{X,n})$ is generated by a single element, $\iota_*$ must be an isomorphism.
\end{proof}

\begin{lem}\label{lem:exactseq2}
Let $X, Y$ and $v,w$ be as given in Theorem~\ref{thm:connectedsum}. Then there exists an exact sequence as follows.
\[
\xymatrix@C=1pc{
1\ar[r]&H_1(\Theta_k)\ar[r]\ar[d]^{i_{\Theta_k,n}}&
H_1(X)\oplus H_1(Y)\ar[r]\ar[d]^{i_{X,n}\oplus i_{Y,n}}&
H_1(X\# Y)\ar[r]\ar[d]^{i_{X\# Y,n}}&1\\
&H_1(\mathbf{B}_n(\Theta_k))\ar[r]&
H_1(\mathbf{B}_n(X))\oplus H_1(\mathbf{B}_n(Y))\ar[r]&
H_1(\mathbf{B}_n(X\# Y))\ar[r]&1.
}
\]
\end{lem}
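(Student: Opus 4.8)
The plan is to obtain both rows by abelianizing the group‑theoretic descriptions of the braid groups under connected sum, and to recognize the top row as the special case $n=1$, where $\mathbf{B}_1(Z)=\pi_1(Z)$ and hence $H_1(\mathbf{B}_1(Z))=H_1(Z)$. Throughout I would work with the decomposition $X\#Y=A\cup B$, where $A=X_v\cup E\cup T_Y$ and $B=T_X\cup E\cup Y_w$, with $T_X\subset X_v$ and $T_Y\subset Y_w$ the embedded trees joining the $v_i$ (resp.\ the $w_i$) and $E=\bigcup e_i$. Since $E\cup T_Y$ is a contractible tree with leaf set $\{v_1,\dots,v_k\}$, collapsing it shows $A\simeq X_v\cup_{\{v_i\}}T_k=X$, and symmetrically $B\simeq Y$; moreover $A\cap B=T_X\cup E\cup T_Y\simeq\Theta_k$.

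First I would prove the bottom row. By Theorem~\ref{thm:connectedsum}, $\mathbf{B}_n(X\#Y)$ is the free product with amalgamation $\mathbf{B}_n(X)\ast_{\mathbf{B}_n(\Theta_k)}\mathbf{B}_n(Y)$ modulo the commutator relations $[\mathbf{B}_r(X_v),\mathbf{B}_s(Y_w)]$. Abelianizing kills these commutator relations outright, and the abelianization of an amalgamated free product $A\ast_C B$ is the cokernel of the map $c\mapsto(\tilde\Psi_{X*}(c),-\tilde\Psi_{Y*}(c))$ from $H_1(\mathbf{B}_n(\Theta_k))$ into $H_1(\mathbf{B}_n(X))\oplus H_1(\mathbf{B}_n(Y))$. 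This yields exactly the right‑exact sequence
\[
H_1(\mathbf{B}_n(\Theta_k))\longrightarrow H_1(\mathbf{B}_n(X))\oplus H_1(\mathbf{B}_n(Y))\longrightarrow H_1(\mathbf{B}_n(X\#Y))\longrightarrow 1,
\]
which is the bottom row; note that no injectivity on the left is asserted, exactly as abelianizing a push‑out produces.

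For the top row I would run the Mayer--Vietoris sequence of $X\#Y=A\cup B$, equivalently abelianize the $n=1$ case of Theorem~\ref{thm:connectedsum}, where $\pi_1(X\#Y)\simeq\pi_1(X_v)\ast\pi_1(Y_w)\ast\langle t_2,\dots,t_k\rangle$. Since $A\cap B\simeq\Theta_k$ is connected, $\tilde H_0(\Theta_k)=0$, so the sequence gives surjectivity onto $H_1(X\#Y)$ and exactness in the middle. The extra point is injectivity of $H_1(\Theta_k)\to H_1(X)\oplus H_1(Y)$: the Mayer--Vietoris computation for $X=X_v\cup_{\{v_i\}}T_k$ gives
\[
0\longrightarrow H_1(X_v)\longrightarrow H_1(X)\longrightarrow\mathbb{Z}^{k-1}\longrightarrow 0,
\]
and the $\mathbb{Z}^{k-1}$ summand is generated precisely by the loops through the cone point $v$, i.e.\ by the images of the generators of $H_1(\Theta_k)$. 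Hence $H_1(\Theta_k)\to H_1(X)$ is already (split) injective, and a fortiori so is the map into $H_1(X)\oplus H_1(Y)$; the cokernel identifies the two copies of $\mathbb{Z}^{k-1}$ and recovers $H_1(X\#Y)\cong H_1(X_v)\oplus H_1(Y_w)\oplus\mathbb{Z}^{k-1}$, confirming exactness.

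Finally I would check that the squares commute. The vertical maps $i_{\,\cdot\,,n}$ are induced by the point‑appending homomorphisms of Proposition~\ref{prop:injection} applied to $\mathbf{B}_1\to\mathbf{B}_n$, so the whole diagram is the comparison of the $n=1$ and general‑$n$ rows via point‑appending. Commutativity then follows from the naturality of point‑appending with respect to the structure homomorphisms $\Phi_X,\Phi_Y$ and $\tilde\Psi_X,\tilde\Psi_Y$, which is the content of the commutative diagram established just before Theorem~\ref{thm:connectedsum}. I expect this last verification to be the main obstacle: it requires tracking basepoints and the connecting paths $\gamma_i,\delta_i$ carefully enough to see that point‑appending intertwines the $n=1$ structure maps with their braid‑group counterparts, whereas the exactness of each row is essentially formal once the two presentations have been abelianized.
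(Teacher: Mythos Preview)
Your proposal is correct and follows precisely the route the paper intends: the text immediately preceding the lemma states that it is a ``direct consequence of Theorem~\ref{thm:connectedsum},'' and you have simply unpacked that claim by abelianizing the push-out presentation (killing the commutator relations automatically) for the bottom row, specializing to $n=1$ together with a Mayer--Vietoris argument for the top row and its left injectivity, and invoking naturality of point-appending for the squares. Your concern about the commutativity verification is mild overcaution: once one observes that the horizontal maps on both rows are those induced by the same underlying maps of spaces $\Theta_k\to X$, $\Theta_k\to Y$, $X\to X\#Y$, $Y\to X\#Y$ (via $\tilde\Psi$ and $\Phi$), functoriality of $i_{\,\cdot\,,n}$ with respect to such maps gives commutativity without delicate basepoint tracking.
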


\subsection{Connectivities}
We adopt another notion for a decomposition, called a {\em cut}. This notion originated in graphs and is extended to complexes in slightly different ways as follows.

\begin{defn}
Let $X$ be a sufficiently subdivided simple complex. A set $\mathbf{v}=\{v_1,\dots,v_k\}$ of vertices in $\br(X)$ is a {\em $k$-cut} if $X_\mathbf{v}=X\setminus \st(\mathbf{v})$ is disconnected.

We say that a $k$-cut $\mathbf{v}$ is {\em trivial} if there exists a subcomplex $Y\subset X$ with $\mathbf{v}\subset\partial Y$ and $X$ is homeomorphic to $\widehat{(Y,\mathbf{v})}$,
and that $X$ is {\em vertex-$k$-connected} unless there is a nontrivial $(k-1)$-cut.
\end{defn}

\begin{rmk}
If $\mathbf{v}=\{v_1,\dots,v_k\}$ is a trivial $k$-cut and $\widehat Y=X$, then $\val(v_i)=1$ in $Y$ since $\mathbf{v}\subset\partial Y$.
Therefore $\val(v_i)=2$ in $X$ for all $i$.
However, since all $v_i$ are in $\br(X)$ and $X$ is simple, $\lk(v_i)=D^m\sqcup \{*\}$ for some $m\ge 1$.
Consequently, a trivial cut may exist only when $X$ is of dimension at least 2.

Especially, if there is a trivial 1-cut $v$, then it satisfies the assumption of Proposition~\ref{prop:edgecontraction} and therefore we may assume that there is no trivial 1-cut without loss of any generality.
\end{rmk}


For example, any nontrivial tree has a nontrivial 1-cut and so it is not vertex-2-connected, and $\Theta_k$ for $k\ge 3$ has a unique nontrivial 2-cut but no 1-cut, hence it is vertex-2-connected. 


%

\begin{figure}[ht]
\[
S_0'=\vcenter{\hbox{\includegraphics{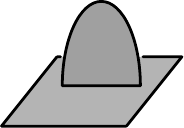}}}\quad\equiv_B\quad
\vcenter{\hbox{\includegraphics{S_0.pdf}}}=S_0
\]
\caption{A complex $S_0'$ which is braid equivalent to $S_0$}
\label{fig:S_0'}
\end{figure}

We introduce the famous result of Menger about the relationship between vertex-$k$-connectivity and the existence of embedded $\Theta_k$ as follows.
\begin{lem}\cite{M}
Let $\Gamma$ be a graph without a vertex of valency 1.
Then $\Gamma$ is vertex-$k$-connected if and only if for any $v,w\in\br(\Gamma)$, there is an embedding $(\Theta_k, \{0,0'\})\to(\Gamma,\{v,w\})$ of pairs.
\end{lem}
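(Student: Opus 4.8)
The plan is to recognize this statement as the classical Menger theorem, repackaged for the topological cut of the paper, and to prove it by reducing $\Gamma$ to a finite combinatorial multigraph on its branch vertices. First I would smooth $\Gamma$: since $\Gamma$ has no vertex of valency $1$, every non-branch vertex has valency $2$ and is a mere subdivision point, so suppressing all such points yields a multigraph $\Gamma'$ with vertex set $\br(\Gamma)$ and one edge for each arc of $\Gamma$ joining two branch vertices. This operation does not change the homeomorphism type of $\Gamma$, hence changes neither the notion of an embedded $\Theta_k$ nor the set of cuts. By the Remark following the definition of a cut, every cut of a graph is nontrivial (triviality requires dimension at least $2$), so I may ignore the triviality clause throughout.

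Next I would set up the dictionary between the two sides. An embedding $(\Theta_k,\{0,0'\})\to(\Gamma,\{v,w\})$ of pairs is, after smoothing, exactly a choice of $k$ arcs from $v$ to $w$ in $\Gamma'$ that are pairwise disjoint except at their endpoints, i.e. $k$ internally vertex-disjoint $v$--$w$ paths. On the other side, for $\mathbf u\subset\br(\Gamma)$ the deletion $X_{\mathbf u}=X\setminus\st(\mathbf u)$ disconnects $\Gamma$ precisely when removing the vertices $\mathbf u$, while keeping the surviving arc-interiors as their own components, separates $\Gamma'$; thus a $(k-1)$-cut is the topological incarnation of a separating set of size $k-1$. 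With this dictionary, being vertex-$k$-connected translates into the absence of a separator of size $k-1$, and the desired equivalence becomes the statement that $\Gamma'$ has no such separator if and only if every pair of vertices is joined by $k$ internally disjoint paths, which is the global (Whitney) form of Menger's theorem \cite{M}.

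With Menger available I would argue both implications directly against the dictionary. For the contrapositive of one direction, if $\mathbf u$ were a separating set with $\#\mathbf u=k-1$, then choosing branch vertices $v,w$ in distinct components of $\Gamma'-\mathbf u$, any family of $k$ internally disjoint $v$--$w$ paths must each meet $\mathbf u$, so by the pigeonhole principle two of them would share a vertex of $\mathbf u$, contradicting internal disjointness; hence the $\Theta_k$-condition forbids such a cut. Conversely, the absence of a $(k-1)$-separator delivers, by Menger, $k$ internally disjoint paths between any two vertices of $\Gamma'$, and since these vertices are exactly the elements of $\br(\Gamma)$, this produces the required embedding of $\Theta_k$ for every pair $v,w\in\br(\Gamma)$.

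The delicate point, and the step I expect to cost the most care, is making the dictionary between the topological star-removal cut and the combinatorial separator watertight, together with the genuinely degenerate small configurations. Concretely, one must check that the surviving open arcs left after deleting $\st(\mathbf u)$ are counted correctly as separate components, so that blocking an arc, not merely a vertex, is what matters; that each relevant component of $\Gamma'-\mathbf u$ indeed contains a branch vertex so the pigeonhole argument has endpoints to work with; and that configurations with very few branch vertices, where $\Gamma'$ behaves like a multigraph on two nodes, are handled consistently with the convention implicit in the definition of vertex-$k$-connectivity. Once these conventions are pinned down, all topological content is supplied by Menger's theorem and nothing further is needed.
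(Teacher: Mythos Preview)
The paper gives no proof of this lemma at all; it is simply stated with the citation \cite{M} and used as a black box. Your proposal correctly identifies the statement as a topological repackaging of Menger's theorem and supplies the reduction the paper leaves implicit, so your approach is exactly what the citation is pointing at.
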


\begin{ex}[Vertex-3-connectivity for the union of two trees]\label{ex:twotrees2}
Recall the graph $\widehat T\#\widehat T'$ defined in Example~\ref{ex:twotrees}.
Then it is vertex-3-connected only if there is an embedding $(\Theta_3,\{0,0'\})\to(\widehat T\#\widehat T', \{v,w\})$ for any $v\in \br(T)$ and $w\in\br(T')$. Indeed, we may assume that such $\Theta_k$ always passes the point $1\in\partial T$.

Let $T_3^L$ and $T_3^R$ be two halves of $\Theta_k$ as before.
Then the restrictions of an embedding $\Theta_k\to\widehat T\#\widehat T$ to $T_3^L$ and $T_3^R$ give us a pair of equivalence classes $[s_{i,j}]$ and $[s_{i,j}']$ with respect to $\sim_T$ and $\sim_{T'}$, 
which are related as 
$s_{i,j}' =t_i^{-1}t_j^{-1}s_{i,j}^{-1}t_i t_j$ as described in Example~\ref{ex:tree} and Example~\ref{ex:twotrees}.

Therefore the vertex-3-connectivity of $\widehat T\#\widehat T'$ implies that any pair of generators for $\mathbf{B}_2(T)$ and $\mathbf{B}_2(T')$ are related, and so $\mathbf{B}_2(\widehat T\#\widehat T')$ is generated by $t_i$'s and only one $s_{i,j}$.
\end{ex}

\subsubsection{1-cuts}
Assume that $X$ has a 1-cut $v$ of valency $k\ge 2$, and $X_1\dots, X_m$ are connected components of $X_v$. Note that if $k=2$, then $m$ must be 2 and this is a 1-connected sum decomposition of $X$. Therefore we assume that $k\ge 3$.

For $1\le i\le k$, let $k_i$ be the number of vertices in $X_i$ which are adjacent to $v$.
Let $\mathbf{k}=(k_1,\dots,k_m)$, then $\sum_{i=1}^m k_i = k$.

Now we decompose $X$ into $(m+1)$ pieces, namely, $\widehat X_1,\dots, \widehat X_m$ and $\widehat T_{\mathbf{k}}$, via $k_i$-connected sums for all $1\le i\le m$, where $\widehat T_{\mathbf{k}}$ looks like a graph depicted in Figure~\ref{fig:1cut}. 
We apply Lemma~\ref{lem:exactseq2} for $X= \widehat T_{\mathbf{k}} \# \widehat X_1\#\dots\# \widehat X_m$ as follows.
\begin{align*}
\bigoplus_{i=1}^m H_1(\mathbf{B}_n(\Theta_{k_i}))
\xrightarrow{\oplus\tilde\Psi_i}
H_1(\mathbf{B}_n(\widehat T_{\mathbf{k}}))\oplus\bigoplus_{i=1}^m H_1(\mathbf{B}_n(\widehat X_i))
\longrightarrow
H_1(\mathbf{B}_n(X))
\longrightarrow 1.
\end{align*}

\begin{figure}[ht]
\[\vcenter{\hbox{\scriptsize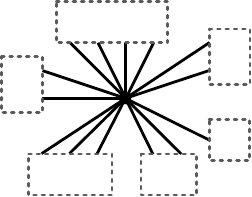}}\quad=\quad
\vcenter{\hbox{\scriptsize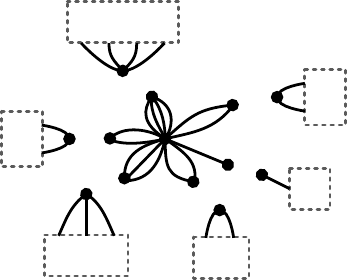}}\]
\caption{A decomposition of $X$ near 1-cut $v$ and a graph $\widehat T_{\mathbf{k}}$ with $\mathbf{k}=(3,2,1,2,4,2)$}
\label{fig:1cut}
\end{figure}

\begin{lem}\cite[Lemma~3.11]{KP}\label{lem:1cut}
The first homology group $H_1(\mathbf{B}_n(\widehat T_{\mathbf{k}}))$ is isomorphic to
\[H_1(\mathbf{B}_n(\widehat T_{\mathbf{k}})) = \mathbb{Z}^{r(n, k,m)} \oplus \bigoplus_{i=1}^m H_1(\mathbf{B}_n(\Theta_{k_i})).\]
\end{lem}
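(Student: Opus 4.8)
The plan is to identify $\widehat T_{\mathbf{k}}$ explicitly and then to compute $H_1$ from the closure presentation of Theorem~\ref{thm:closure}. As drawn in Figure~\ref{fig:1cut}, $\widehat T_{\mathbf{k}}$ is the planar graph having a single central vertex $v$ of valency $k$ whose $k$ incident half-edges are partitioned into $m$ bunches of sizes $k_1,\dots,k_m$; the $i$-th bunch runs from $v$ to a vertex $w_i$ of valency $k_i$, so that each pair $(v,w_i)$ spans a copy of $\Theta_{k_i}$ and the whole graph is $m$ such thetas sharing the common vertex $v$. Equivalently, $\widehat T_{\mathbf{k}}$ is obtained from the core star $T_k$ (the star at $v$) by performing, for each $i$, a $k_i$-closure along the $i$-th bunch of leaves. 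Since $\widehat T_{\mathbf{k}}$ is a planar graph, all $\mathbf{B}_n(\widehat T_{\mathbf{k}})$ are torsion-free and $H_1(\mathbf{B}_n(\widehat T_{\mathbf{k}}))$ is free abelian; hence it suffices to pin down its rank together with an explicit splitting.

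First I would feed the core star into Theorem~\ref{thm:closure} $m$ times. By Lemma~\ref{lem:corolla} the starting group $\mathbf{B}_n(T_k)$ is free with $H_1(\mathbf{B}_n(T_k))\cong\mathbb{Z}^{r(n,k,k)}$, and each $k_i$-closure adjoins the stable letters $t^{(i)}_2,\dots,t^{(i)}_{k_i}$ subject to the relations $t\beta t^{-1}=v_{*}(\beta)$. Abelianizing, every such relation becomes $v_{*}(\beta)=\beta$, an identification among the generators of $H_1(\mathbf{B}_n(T_k))$; crucially, because the capped leaves of a given bunch are distinct boundary components of $T_k$, the maps $v_{*}$ are the nontrivial point-appending maps of the general case of Theorem~\ref{thm:closure}, not the boundary case of Corollary~\ref{cor:boundary}.

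The key step is to sort the generators of $H_1(\mathbf{B}_n(T_k))$ using the explicit $n$-strand description recalled in the Remark following Example~\ref{ex:tree}: each is indexed by the vertex $v$, an ordered pair of outgoing half-edges, and a distribution of the remaining $n-2$ points among the components of $T_k\setminus\{v\}$. I would call a generator \emph{intra-bunch} if its two half-edges lie in the same bunch and \emph{inter-bunch} otherwise. The abelianized closure relations of bunch $i$ involve only the intra-bunch data of that bunch, so they leave the inter-bunch generators untouched and fit the intra-bunch generators of bunch $i$ together with $t^{(i)}_2,\dots,t^{(i)}_{k_i}$ into exactly the $H_1$ of the $k_i$-closure of the $i$-th sub-star, namely $H_1(\mathbf{B}_n(\Theta_{k_i}))$. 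Since distinct bunches share no generators and no relations, these $m$ blocks split off as a direct summand $\bigoplus_i H_1(\mathbf{B}_n(\Theta_{k_i}))$, and what remains is the free group on the inter-bunch generators.

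The main obstacle is the final rank count: verifying that the inter-bunch generators, after the point-distribution bookkeeping, number exactly $r(n,k,m)$, i.e. that the generalized formula $(\ref{eq:rank})$ with $\nu=k$ and $\mu=m$ is reproduced. Here the bunches behave as $m$ effective directions at $v$ while the $n-2$ free points still range over all $k$ edges, which is precisely the combinatorics encoded by the mixed parameters $(k,m)$ in $(\ref{eq:rank})$; the base case $m=1$ (where $\widehat T_{\mathbf{k}}=\Theta_k$ and $r(n,k,1)=0$) provides a reassuring check. A secondary technical point to nail down is that the intra-bunch generators of bunch $i$, which carry point distributions over all of $T_k\setminus\{v\}$ rather than just that bunch, really do reassemble into $H_1(\mathbf{B}_n(\Theta_{k_i}))$; I would handle this by a point-appending argument matching the two generating sets one appended point at a time.
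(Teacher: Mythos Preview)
The paper does not give its own proof of this lemma; it is quoted directly from \cite[Lemma~3.11]{KP}, where the computation is carried out with the discrete Morse--theoretic machinery of \cite{FS,KKP,KP} on the cubical model of $B_n(\widehat T_{\mathbf{k}})$. Your closure-based approach is therefore a genuinely different route.

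That said, the splitting step in your argument has a real gap. You assert that ``the abelianized closure relations of bunch~$i$ involve only the intra-bunch data of that bunch, so they leave the inter-bunch generators untouched.'' This is not correct. When you close bunch~$i$, Theorem~\ref{thm:closure} imposes $[v_{\ell*}(\beta)]=[v_{\ell'*}(\beta)]$ for \emph{every} $\beta\in\mathbf{B}_{n-1}$ of the space at that stage and every $\ell,\ell'$ in bunch~$i$. Two problems follow. First, $\beta$ may itself be an inter-bunch generator, and then $v_{\ell*}(\beta)$ is an inter-bunch $n$-generator with a point on leaf~$\ell$; so inter-bunch classes are not untouched---they get identified whenever they differ by sliding a point within bunch~$i$. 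Second, and more seriously, once bunches $1,\dots,i-1$ are closed, the intermediate space $G_{i-1}$ has nontrivial $\pi_1$, and the relations for the bunch-$i$ closure run over $\mathbf{B}_{n-1}(G_{i-1})$, which contains the stable letters $t^{(j)}$ of the earlier bunches and the corresponding $\pi_1$-loops. These produce genuinely cross-bunch relations that your block decomposition misses.

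A small test case makes the failure explicit. Take $\mathbf{k}=(2,2)$, so $k=4$, $m=2$, and $n=2$. Your scheme assigns to bunch~$2=\{3,4\}$ the intra-bunch generator $s_{3,4}$ together with one stable letter, for a contribution of rank~$2$; but $H_1(\mathbf{B}_2(\Theta_2))=H_1(\mathbf{B}_2(S^1))\cong\mathbb{Z}$ has rank~$1$. Overall your count gives rank $3+1+1=5$ (the three $s$-generators of $T_4$ plus one stable letter per bunch, with no relations since $\mathbf{B}_1(T_4)=1$), while the lemma asserts rank $r(2,4,2)+1+1=4$. The missing relation is exactly the one coming from closing bunch~$2$ over $\mathbf{B}_1(G_1)=\pi_1(G_1)\cong\mathbb{Z}$, namely $[v_{3*}(\gamma)]=[v_{4*}(\gamma)]$ for $\gamma$ the cycle created by the bunch-$1$ closure; this relation involves the bunch-$1$ data in an essential way.

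Your heuristic that the bunches act as $m$ effective directions while the free points range over $k$ edges is the right picture behind the mixed parameters in $r(n,k,m)$, but making it precise requires tracking how the successive closures collapse \emph{both} the half-edge labels and the point distributions to the bunch level, including the interaction with the $\pi_1$ already created. The Morse-theoretic proof in \cite{KP} handles this bookkeeping on an explicit chain complex; if you want to salvage the closure approach, you would need a careful induction on the number of closed bunches that keeps track of $H_1(\mathbf{B}_{n-1}(G_{i-1}))$ at each stage, not merely of $H_1(\mathbf{B}_n(T_k))$.
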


Hence the obvious embedding $\Theta_{k_i}\to \widehat{T}_{\mathbf{k}}$ yields an injection 
\[\tilde\Psi_i:H_1(\mathbf{B}_n(\Theta_{k_i}))\to H_1(\mathbf{B}_n(\widehat T_{\mathbf{k}})),\]
and therefore the sequence above becomes a short exact sequence. Moreover, we have the following lemma which is obvious by the decomposition in Lemma~\ref{lem:1cut}.

\begin{lem}
Let $X, v$ and $X_i$'s be as before. Then
\[H_1(\mathbf{B}_n(X))= \mathbb{Z}^{r(n,k,m)}\oplus \bigoplus_{i=1}^m H_1(\mathbf{B}_n(\widehat X_i)).\]
\end{lem}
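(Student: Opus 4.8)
The plan is to identify $H_1(\mathbf{B}_n(X))$ with the cokernel of the map $\oplus\tilde\Psi_i$ appearing in the exact sequence displayed just before the statement, which comes from iterating Lemma~\ref{lem:exactseq2} along the connected-sum decomposition $X=\widehat T_{\mathbf{k}}\#\widehat X_1\#\cdots\#\widehat X_m$. First I would record that this sequence is in fact \emph{short} exact: as already observed, the obvious embedding $\Theta_{k_i}\to\widehat T_{\mathbf{k}}$ makes the $\widehat T_{\mathbf{k}}$-component of $\tilde\Psi_i$ an injection, so $\oplus\tilde\Psi_i$ is injective and
\[
1\longrightarrow\bigoplus_{i=1}^m H_1(\mathbf{B}_n(\Theta_{k_i}))\xrightarrow{\oplus\tilde\Psi_i}H_1(\mathbf{B}_n(\widehat T_{\mathbf{k}}))\oplus\bigoplus_{i=1}^m H_1(\mathbf{B}_n(\widehat X_i))\longrightarrow H_1(\mathbf{B}_n(X))\longrightarrow 1.
\]

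Next I would substitute the explicit description of the central factor from Lemma~\ref{lem:1cut}, namely $H_1(\mathbf{B}_n(\widehat T_{\mathbf{k}}))=\mathbb{Z}^{r(n,k,m)}\oplus\bigoplus_{i=1}^m H_1(\mathbf{B}_n(\Theta_{k_i}))$, and track the effect of $\oplus\tilde\Psi_i$. The crucial point is a compatibility claim: under this decomposition the $\widehat T_{\mathbf{k}}$-component of $\tilde\Psi_i$ is precisely the inclusion of $H_1(\mathbf{B}_n(\Theta_{k_i}))$ as the $i$-th summand, since both this summand and the map $\tilde\Psi_i$ are induced by one and the same embedding $\Theta_{k_i}\to\widehat T_{\mathbf{k}}$. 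Granting this, set $C=\bigoplus_i H_1(\mathbf{B}_n(\Theta_{k_i}))$ for the copy sitting inside $H_1(\mathbf{B}_n(\widehat T_{\mathbf{k}}))$ and $D=\mathbb{Z}^{r(n,k,m)}\oplus\bigoplus_i H_1(\mathbf{B}_n(\widehat X_i))$, so that the middle group is $C\oplus D$ and the composite of $\oplus\tilde\Psi_i$ with the projection onto $C$ is an isomorphism.

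I would then conclude by the standard graph-of-a-homomorphism argument: if $f\colon A\to C\oplus D$ is a homomorphism of abelian groups for which $\pi_C\circ f$ is an isomorphism, then $\im(f)$ is the graph of $g=\pi_D\circ f\circ(\pi_C\circ f)^{-1}\colon C\to D$, and the assignment $(c,d)\mapsto d-g(c)$ carries $(C\oplus D)/\im(f)$ isomorphically onto $D$. Applying this with $A=\bigoplus_i H_1(\mathbf{B}_n(\Theta_{k_i}))$ and $f=\oplus\tilde\Psi_i$ yields
\[
H_1(\mathbf{B}_n(X))\simeq D=\mathbb{Z}^{r(n,k,m)}\oplus\bigoplus_{i=1}^m H_1(\mathbf{B}_n(\widehat X_i)),
\]
exactly as claimed.

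The main obstacle is the compatibility claim in the second step: one must verify that the summand decomposition furnished by Lemma~\ref{lem:1cut} is compatible with the connected-sum structure maps, i.e.\ that the $i$-th $\Theta_{k_i}$-summand of $H_1(\mathbf{B}_n(\widehat T_{\mathbf{k}}))$ genuinely coincides with the image of $H_1(\mathbf{B}_n(\Theta_{k_i}))$ under the embedding used to form the $i$-th connected sum. This is precisely where the geometry of $\widehat T_{\mathbf{k}}$ enters, and once it is settled everything else is formal homological algebra about the cokernel of a split injection.
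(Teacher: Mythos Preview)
Your argument is correct and is exactly the approach the paper takes, only spelled out in more detail; the paper simply says the lemma is ``obvious by the decomposition in Lemma~\ref{lem:1cut}'' after noting that the $\widehat T_{\mathbf{k}}$-component of each $\tilde\Psi_i$ is the injection induced by the obvious embedding $\Theta_{k_i}\to\widehat T_{\mathbf{k}}$. Your flagged ``compatibility obstacle'' is precisely what the paper is taking for granted here: the summands in Lemma~\ref{lem:1cut} are \emph{defined} as the images of those same embeddings, so the compatibility is built into the statement of that lemma rather than something requiring separate verification.
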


In summary, we can say that each nontrivial 1-cut $v$ contributes to the first Betti number as much as $r(n,k,m)$ where $k=\val(v)$ and $m=\#(\pi_0(X_v))$. 

From now on, we assume that $X$ has no 1-cut. 
\begin{lem}\cite[Lemma~3.12]{KP}\label{lem:biconn}
Let $\Gamma$ be a graph without a 1-cut. Then for all $n\ge 2$, $H_1(\mathbf{B}_n(\Gamma))\simeq H_1(\mathbf{B}_2(\Gamma))$.

Therefore, $H_1(\mathbf{B}_n(\Theta_k))\simeq H_1(\Theta_k)\oplus\mathbb{Z}^{\binom{k-1}2}$.
\end{lem}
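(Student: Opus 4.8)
The plan is to treat the two assertions separately: the second (the value of $H_1(\mathbf{B}_n(\Theta_k))$) is a formal consequence of the first, so I would concentrate on the stabilization $H_1(\mathbf{B}_n(\Gamma))\simeq H_1(\mathbf{B}_2(\Gamma))$. First I would set up a natural comparison homomorphism $\theta\colon\mathbf{B}_2(\Gamma)\to\mathbf{B}_n(\Gamma)$ that appends $n-2$ spectator strands along a fixed short arc; for a tree-closure this is precisely the surjection of Example~\ref{ex:theta}, and for a general graph without a $1$-cut I would assemble it from the combination presentations of Theorem~\ref{thm:closure} and Theorem~\ref{thm:connectedsum}. Passing to abelianizations gives $\theta_*\colon H_1(\mathbf{B}_2(\Gamma))\to H_1(\mathbf{B}_n(\Gamma))$, and the objective is to show $\theta_*$ is an isomorphism.

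The heart of the matter is surjectivity, and this is exactly where the hypothesis ``no $1$-cut'' is indispensable. The group $\mathbf{B}_n(\Gamma)$ is generated by single-strand moves about the branch vertices and about the cycles of $\Gamma$, with the remaining $n-2$ strands acting as spectators; a priori each such generator depends on how the spectators are distributed among the connected components of $\Gamma\setminus\st(v)$ at the active branch vertex $v$. The key claim is that in $H_1(\mathbf{B}_n(\Gamma))$ this dependence vanishes precisely when $\Gamma$ has no $1$-cut: since $\Gamma\setminus\st(v)$ is then connected, a spectator strand can be rerouted from one local component into another along a path of $\Gamma$ avoiding $v$, and such a rerouting merely conjugates the branch generator, hence acts trivially after abelianization. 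Consequently every generator of $H_1(\mathbf{B}_n(\Gamma))$ equals an appended class coming from $H_1(\mathbf{B}_2(\Gamma))$, so $\theta_*$ is onto. For injectivity I would compare invariants on the two sides: by the graph braid group theorem quoted in the introduction the torsion subgroup is $\mathbb{Z}_2$ when $\Gamma$ is nonplanar and $0$ otherwise, independently of $n$; and the collapse of the spectator distributions shows that the abelianized presentations for general $n$ and for $n=2$ carry the same generators and relations, so the free ranks agree as well. Hence $\theta_*$ is an isomorphism.

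Granting this stabilization, the formula for $\Theta_k$ is immediate. For $k\ge 2$ the graph $\Theta_k$ has no $1$-cut, so $H_1(\mathbf{B}_n(\Theta_k))\simeq H_1(\mathbf{B}_2(\Theta_k))$; by Example~\ref{ex:theta} the group $\mathbf{B}_2(\Theta_k)$ is free of rank $\binom{k}{2}$, whence $H_1(\mathbf{B}_2(\Theta_k))\simeq\mathbb{Z}^{\binom{k}{2}}$. Since $H_1(\Theta_k)\simeq\mathbb{Z}^{k-1}$ and $(k-1)+\binom{k-1}{2}=\binom{k}{2}$, this is isomorphic to $H_1(\Theta_k)\oplus\mathbb{Z}^{\binom{k-1}{2}}$, as claimed; the same splitting can also be read off the exact sequence of Lemma~\ref{lem:exactseq1} applied to the closure $\Theta_k=\widehat{T_k}$. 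I expect the main obstacle to be the stabilization lemma itself, and in particular a clean proof that the spectator-distribution classes coincide in $H_1$, since this is exactly the step that converts the $n$-dependent rank $r(n,k,k)$ of $\mathbf{B}_n(T_k)$ from Corollary~\ref{cor:elementary} into the $n$-independent answer once the complement of every branch vertex has been made connected.
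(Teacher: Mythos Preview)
The paper does not prove this lemma at all: it is quoted verbatim from \cite[Lemma~3.12]{KP} and used as a black box, with the ``Therefore'' sentence about $\Theta_k$ following immediately from the first sentence together with Example~\ref{ex:theta}. So there is no proof in the paper to compare your proposal against. Your deduction of the $\Theta_k$ formula from the stabilization statement is correct and matches how the paper uses the lemma.

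For the stabilization statement itself, your surjectivity argument is on the right track: the absence of a $1$-cut means $\Gamma_v$ is connected for every branch vertex $v$, so spectator strands can be moved between local components by conjugation, collapsing the dependence on spectator distribution in $H_1$. However, your injectivity argument has a genuine gap. Invoking the torsion theorem from \cite{KKP} is circular, since that result is proved in the same circle of ideas and in any case only tells you torsion is present or absent, not that it equals $\mathbb{Z}_2$. More seriously, the claim that ``the abelianized presentations for general $n$ and for $n=2$ carry the same generators and relations'' is exactly what you are trying to prove: surjectivity gives you that the generators coincide, but you still need to show that the extra relators appearing in $\mathbf{B}_n(\Gamma)$ for $n\ge 3$ become redundant after abelianization and do not kill any further classes. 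The actual proof in \cite{KP} handles this via discrete Morse theory on Abrams' cubical model, computing the critical $1$- and $2$-cells explicitly and checking that the boundary map stabilizes. If you want an argument internal to this paper's machinery, you would need to work inductively through the closure and connected-sum presentations (Theorems~\ref{thm:closure} and \ref{thm:connectedsum}) and verify at each step that the added relators $t_i\beta t_i^{-1}=v_{i*}(\beta)$ abelianize to relations already present at the $n=2$ level; this is doable but is real work, not a one-line observation.
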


\subsubsection{2-cuts}
Assume that $X$ has no 1-cut but a nontrivial 2-cut $\mathbf{v}=\{v_1,v_2\}$, and $X_1,\dots, X_m$ are the connected components of $X_\mathbf{v}$ as before. 
Let $k_{i,j}$ be the number of components of $\lk(v_j)$ in $X_i$ and $\mathbf{k}_j=(k_{1,j},\dots,k_{m,j})$ for $1\le i\le m, j=1,2$.

Similar to above, we decompose $X$ into $(m+1)$-pieces via $(k_{i,1}+k_{i,2})$-connected sums as depicted in Figure~\ref{fig:2cut}. The connected summands will be denoted by $\widehat X_1,\dots, \widehat X_m$ and $\Theta_{\mathbf{k}_1, \mathbf{k}_2}$.
Then by Lemma~\ref{lem:exactseq2}, $H_1(\mathbf{B}_n(X))$ is isomorphic to the cokernel of 
\[
\xymatrix{
\displaystyle{\bigoplus_{i=1}^m H_1(\mathbf{B}_n(\Theta_{k_{i,1}+k_{i,2}}))}\ar[r]^-{\oplus \tilde\Psi_i}&
H_1(\mathbf{B}_n(\Theta_{\mathbf{k}_1,\mathbf{k}_2}))\oplus\displaystyle{\bigoplus_{i=1}^m H_1(\mathbf{B}_n(\widehat X_i)).}
}
\]

\begin{figure}[ht]
\[X\quad=\quad\vcenter{\hbox{\scriptsize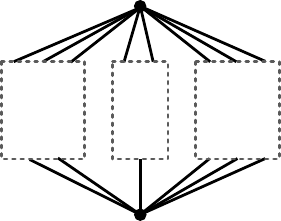}}\quad=\quad
\vcenter{\hbox{\scriptsize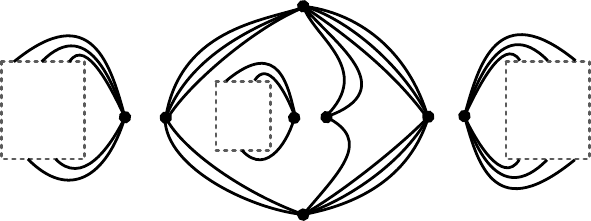}}\]
\caption{A decomposition of $X$ near a 2-cut $\mathbf{v}$ and a graph $\Theta_{\mathbf{k}_1,\mathbf{k}_2}$ with $\mathbf{k}_1=(3,2,3)$ and $\mathbf{k}_2=(2,1,3)$}
\label{fig:2cut}
\end{figure}

\begin{figure}[ht]
\[
\vcenter{\hbox{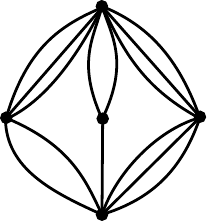}}\quad\stackrel{q}{\longleftarrow}\quad
\vcenter{\hbox{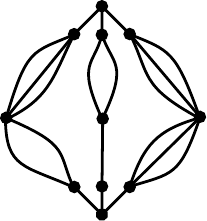}}\quad=\quad
\vcenter{\hbox{\scriptsize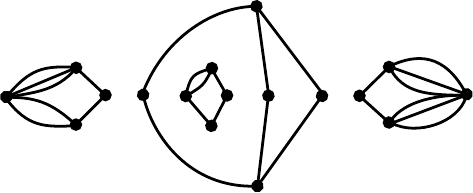}}
\]
\caption{A decomposition of $\widetilde\Theta_{\mathbf{k}_1,\mathbf{k}_2}$ via 2-connected-sums}
\label{fig:tildethetadecomposition}
\end{figure}

Let $\Theta_{a,b,c}$ denote a (possibly subdivided) graph obtained by replacing respective edges of the triangle with $a$, $b$ and $c$ multiple edges.
We take $2$-connected sums between $\Theta_{k_{i,1},k_{i,2},1}$ and $\Theta_m$ to obtain $\widetilde\Theta_{\mathbf{k}_1,\mathbf{k}_2}$. Then $\Theta_{\mathbf{k}_1,\mathbf{k}_2}$ comes from $\widetilde\Theta_{\mathbf{k}_1,\mathbf{k}_2}$ by contracting all edges adjacent to vertices of $\Theta_m$. See Figure~\ref{fig:tildethetadecomposition}.

We want to use $\widetilde\Theta_{\mathbf{k}_1,\mathbf{k}_2}$ instead of $\Theta_{\mathbf{k}_1,\mathbf{k}_2}$.
That is, we define $\widetilde X$ by taking $(k_{i,1}+k_{i,2})$-connected-sums between $\widehat X_i$ and $\widetilde\Theta_{\mathbf{k}_1,\mathbf{k}_2}$.
The lemma below ensures that we can safely do this.

\begin{lem}\cite[Lemma~3.14]{KP}\label{lem:2cut}
Let $q:\widetilde\Theta_{\mathbf{k}_1,\mathbf{k}_2}\to\Theta_{\mathbf{k}_1,\mathbf{k}_2}$ be the quotient map and $q^*:\mathbf{B}_n(\Theta_{\mathbf{k}_1,\mathbf{k}_2})\to\mathbf{B}_n(\widetilde\Theta_{\mathbf{k}_1,\mathbf{k}_2})$ be the map defined in Proposition~\ref{prop:generaledgecontraction}.

Then $q_*$ induces an isomorphism between abelianizations, that is, the first homology group $H_1(\mathbf{B}_n(\Theta_{\mathbf{k}_1,\mathbf{k}_2}))$ is isomorphic to
$H_1(\mathbf{B}_n(\widetilde \Theta_{\mathbf{k}_1,\mathbf{k}_2}))$
\end{lem}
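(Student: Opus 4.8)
The plan is to exploit that $q^*$ is already known to be surjective and then upgrade this to an isomorphism on abelianizations by a finiteness (Hopfian) argument. First I would check that every edge contracted by $q$ has both endpoints of valency at least $2$: these are the internal edges joining the two vertices of the central $\Theta_m$ to the apex vertices of the triangles $\Theta_{k_{i,1},k_{i,2},1}$, and each such endpoint is a branch vertex of valency $\ge 3$. Hence Proposition~\ref{prop:generaledgecontraction} applies and $q^*:\mathbf{B}_n(\Theta_{\mathbf{k}_1,\mathbf{k}_2})\to\mathbf{B}_n(\widetilde\Theta_{\mathbf{k}_1,\mathbf{k}_2})$ is surjective; abelianizing, the induced map $\bar q_*:H_1(\mathbf{B}_n(\Theta_{\mathbf{k}_1,\mathbf{k}_2}))\to H_1(\mathbf{B}_n(\widetilde\Theta_{\mathbf{k}_1,\mathbf{k}_2}))$ is surjective. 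Since both groups are finitely generated abelian, it then suffices to prove that they are abstractly isomorphic: a surjection between isomorphic finitely generated abelian groups is automatically an isomorphism by the Hopfian property. Thus the problem reduces to matching ranks and torsion on the two sides.

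For the rank, I would first observe that neither $\Theta_{\mathbf{k}_1,\mathbf{k}_2}$ nor $\widetilde\Theta_{\mathbf{k}_1,\mathbf{k}_2}$ has a $1$-cut, since both are assembled from the $2$-connected pieces $\Theta_{k_{i,1},k_{i,2},1}$ and $\Theta_m$ glued along $2$-connected sums; Lemma~\ref{lem:biconn} then lets me replace $n$ by $2$ throughout and work with $H_1(\mathbf{B}_2(-))$. The two graphs are homotopy equivalent, as $q$ contracts only non-loop edges, so their first Betti numbers agree and the \emph{graph part} $H_1(-)$ matches. The remaining braiding contribution is local, concentrated at the branch vertices: I would compute $H_1(\mathbf{B}_2(\widetilde\Theta_{\mathbf{k}_1,\mathbf{k}_2}))$ by iterating the connected-sum sequence of Lemma~\ref{lem:exactseq2} over the explicit decomposition of $\widetilde\Theta_{\mathbf{k}_1,\mathbf{k}_2}$, feeding in the known value $H_1(\mathbf{B}_2(\Theta_k))\simeq H_1(\Theta_k)\oplus\mathbb{Z}^{\binom{k-1}{2}}$ from Example~\ref{ex:theta} and Lemma~\ref{lem:biconn} together with the rank formula~(\ref{eq:rank}); the same local count applied to the contracted graph $\Theta_{\mathbf{k}_1,\mathbf{k}_2}$ must yield the identical total, since collapsing the internal edges merely merges branch vertices without altering the number of parallel-edge bundles entering each cut vertex.

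Finally, for the torsion I would invoke the graph dichotomy of Kim--Ko--Park and Ghrist: $H_1(\mathbf{B}_n(\Gamma))$ carries a (necessarily $\mathbb{Z}_2$) torsion summand exactly when $\Gamma$ is nonplanar, and is torsion-free otherwise. Because $\Theta_{\mathbf{k}_1,\mathbf{k}_2}$ is a minor of $\widetilde\Theta_{\mathbf{k}_1,\mathbf{k}_2}$, planarity of the latter forces planarity of the former; conversely, the passage from $\Theta_{\mathbf{k}_1,\mathbf{k}_2}$ to $\widetilde\Theta_{\mathbf{k}_1,\mathbf{k}_2}$ is a sequence of local vertex-splittings inserting only parallel edges and a small triangle inside a disk neighborhood, hence preserves planarity, so the two graphs are simultaneously planar or nonplanar and their torsion subgroups coincide. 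Together with the matching rank, this gives the abstract isomorphism, and with surjectivity and the Hopfian property the proof is complete. I expect the genuine obstacle to be the rank bookkeeping of the middle step --- verifying that the central $\Theta_m$ and the internal edges of $\widetilde\Theta_{\mathbf{k}_1,\mathbf{k}_2}$ contribute precisely the rank lost under contraction --- rather than the surjectivity or the torsion comparison, which are essentially formal.
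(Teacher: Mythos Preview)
The paper does not supply its own proof of this lemma; it is quoted from \cite[Lemma~3.14]{KP} and used as a black box. Your strategy---establish surjectivity of $q^*$ via Proposition~\ref{prop:generaledgecontraction}, then invoke Hopficity of finitely generated abelian groups after showing the two abelianizations are abstractly isomorphic---is a legitimate route, and is plausibly close in spirit to what \cite{KP} does.

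One simplification and one caution. The torsion discussion is unnecessary here, because both $\Theta_{\mathbf{k}_1,\mathbf{k}_2}$ and $\widetilde\Theta_{\mathbf{k}_1,\mathbf{k}_2}$ are \emph{planar}: the first is visibly a double fan from $v_1,v_2$ over the $m$ middle vertices with parallel-edge bundles, and the second is assembled from the planar pieces $\Theta_m$ and $\Theta_{k_{i,1},k_{i,2},1}$ by $2$-connected sums at valency-$2$ vertices, which preserve planarity. Hence both $H_1(\mathbf{B}_n(-))$ are free abelian and the torsion comparison is vacuous. Your parenthetical claim that graph-braid torsion is ``necessarily $\mathbb{Z}_2$'' is both unneeded and false in general---a graph with several nonplanar vertex-$3$-connected components produces $\mathbb{Z}_2^k$---so it is cleaner to dispose of torsion directly via planarity in this specific case.

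The rank check is, as you correctly anticipate, where the actual content lies. Your plan to reduce to $n=2$ by Lemma~\ref{lem:biconn} and then iterate Lemma~\ref{lem:exactseq2} over the explicit $2$-connected-sum decomposition of $\widetilde\Theta_{\mathbf{k}_1,\mathbf{k}_2}$ is workable; the comparison with a direct count for $\Theta_{\mathbf{k}_1,\mathbf{k}_2}$ then amounts to verifying that splitting each of $v_1,v_2$ into $m$ vertices of the appropriate local valencies and inserting the central $\Theta_m$ leaves the sum $b_1(\Gamma)+\sum_v\binom{\val(v)-1}{2}$ unchanged. This is a straightforward but genuine combinatorial identity, and carrying it out cleanly essentially reproduces the cited computation in \cite{KP}; there is no shortcut past that bookkeeping.
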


Therefore $H_1(\mathbf{B}_n(X))=H_1(\mathbf{B}_n(\widetilde X))$ and now we can decompose $\widetilde X$ by using $2$-connected-sums into $\widetilde X_i$'s and $\Theta_m$, where $\widetilde X_i=\widehat X_i \# \Theta_{k_{i,1},k_{i,2},1}$.

\begin{figure}[ht]
\begin{align*}
\widetilde X\quad=\quad&\vcenter{\hbox{\scriptsize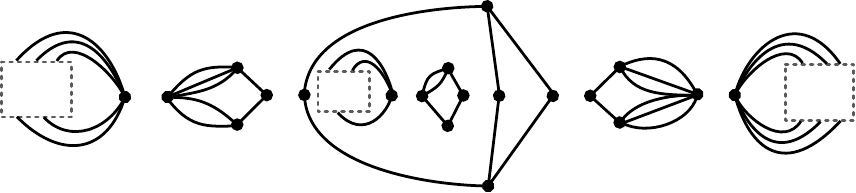}}\\
\quad=\quad&\vcenter{\hbox{\scriptsize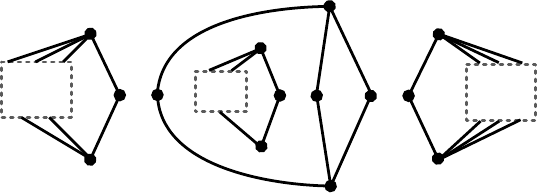}}\\
\quad=\quad&\Theta_3\#\widetilde X_1\#\widetilde X_2\#\widetilde X_3.
\end{align*}
\caption{A 2-connected-sum decomposition of $\widetilde X$ near a 2-cut}
\label{fig:tildeX}
\end{figure}

By Lemma~\ref{lem:exactseq2} again, we have a short exact sequence
\[1\to\bigoplus_{i=1}^m H_1(\mathbf{B}_n(\Theta_2))\to
H_1(\mathbf{B}_n(\Theta_m))\oplus\bigoplus_{i=1}^m H_1(\mathbf{B}_n(\widetilde X_i))\to
H_1(\mathbf{B}_n(\widetilde X))\to 1.\]

\begin{lem}
Let $X, \mathbf{v}$ and $X_i$'s be as above. Then
\begin{align*}
H_1(\mathbf{B}_n(X))\oplus\mathbb{Z}^m&=H_1(\mathbf{B}_n(\Theta_m))\oplus\bigoplus_{i=1}^m H_1(\mathbf{B}_n(\widetilde X_i))\\
&=\mathbb{Z}^{\binom{m}2}\oplus\bigoplus_{i=1}^m H_1(\mathbf{B}_n(\widetilde X_i)).
\end{align*}
\end{lem}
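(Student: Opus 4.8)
The plan is to read the claim directly off the short exact sequence displayed just above the statement, so that the only genuine work is to verify that the sequence splits and to evaluate $H_1(\mathbf{B}_n(\Theta_m))$. First I would record the identification $H_1(\mathbf{B}_n(X))\simeq H_1(\mathbf{B}_n(\widetilde X))$ provided by Lemma~\ref{lem:2cut}, so that the cokernel on the right is exactly the group we want to describe. Since each gluing circle is a $\Theta_2\simeq S^1$, we have $H_1(\mathbf{B}_n(\Theta_2))\simeq\mathbb{Z}$ and the kernel is the free abelian group $\mathbb{Z}^m$; thus the sequence reads $1\to\mathbb{Z}^m\xrightarrow{\alpha} M\to H_1(\mathbf{B}_n(\widetilde X))\to 1$, where $M=H_1(\mathbf{B}_n(\Theta_m))\oplus\bigoplus_{i=1}^m H_1(\mathbf{B}_n(\widetilde X_i))$.

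The crux is that $\alpha$ is a split injection, equivalently that its image is a direct summand of $M$; granting this, $M\simeq\mathbb{Z}^m\oplus H_1(\mathbf{B}_n(\widetilde X))$, which is precisely the first asserted equality. To exhibit a splitting I would construct a retraction $r\colon M\to\mathbb{Z}^m$ with $r\circ\alpha=\mathrm{id}$. The generator $z_i$ of the $i$-th kernel summand is the class of the $i$-th connecting circle, and under $\alpha$ it maps to a pair $(a_i,\tau_i)$ with $a_i\in H_1(\mathbf{B}_n(\Theta_m))$ and $\tau_i\in H_1(\mathbf{B}_n(\widetilde X_i))$, where $\tau_i$ is the image of $H_1(\mathbf{B}_n(\Theta_2))$ under the connected-sum map into the $i$-th summand. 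The key point is that $\tau_i$ is a \emph{primitive} element spanning a $\mathbb{Z}$-direct summand of $H_1(\mathbf{B}_n(\widetilde X_i))$: indeed $\tau_i$ is a stable letter for the $2$-connected sum attaching $\widetilde X_i$ to the hub $\Theta_m$, and by the splitting of the closure sequence noted after Lemma~\ref{lem:exactseq1} such stable letters generate a free summand. Because the various $\tau_i$ lie in \emph{distinct} direct summands $H_1(\mathbf{B}_n(\widetilde X_i))$ of $M$, projecting $M$ onto $\bigoplus_i\mathbb{Z}\langle\tau_i\rangle$ gives the retraction: the $\tau_i$-coordinate of $\alpha(z_i)$ is $1$ while the $\tau_j$-coordinate of $\alpha(z_i)$ vanishes for $j\neq i$, so $r\circ\alpha=\mathrm{id}$.

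For the second equality I would compute $H_1(\mathbf{B}_n(\Theta_m))$ directly. Since $X$ has no $1$-cut, neither does $\Theta_m$, so Lemma~\ref{lem:biconn} gives $H_1(\mathbf{B}_n(\Theta_m))\simeq H_1(\mathbf{B}_2(\Theta_m))$, and by Example~\ref{ex:theta} the group $\mathbf{B}_2(\Theta_m)$ is free of rank $\binom{m}{2}$, whence $H_1(\mathbf{B}_n(\Theta_m))\simeq\mathbb{Z}^{\binom{m}{2}}$; equivalently $H_1(\Theta_m)\oplus\mathbb{Z}^{\binom{m-1}{2}}\simeq\mathbb{Z}^{m-1}\oplus\mathbb{Z}^{\binom{m-1}{2}}\simeq\mathbb{Z}^{\binom m2}$. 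Substituting this into the first equality yields the displayed formula.

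The step I expect to be the main obstacle is the primitivity of $\tau_i$, that is, that the $2$-connected-sum stable letter survives as a free generator of $H_1(\mathbf{B}_n(\widetilde X_i))$ rather than being absorbed into torsion or into a defining relation. I would pin this down by producing an explicit $\mathbb{Z}$-valued winding homomorphism on $H_1(\mathbf{B}_n(\widetilde X_i))$ that measures the net number of times strands traverse the separating circle, using that $\widetilde X_i=\widehat X_i\#\Theta_{k_{i,1},k_{i,2},1}$ routes this $S^1$ through the distinguished single edge of $\Theta_{k_{i,1},k_{i,2},1}$, and then checking that it sends $\tau_i$ to $1$. Such a homomorphism simultaneously establishes primitivity and makes the retraction $r$ canonical, completing the splitting argument.
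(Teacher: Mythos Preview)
Your proposal is correct and follows the same approach as the paper: both read the lemma off the short exact sequence displayed just above it together with $H_1(\mathbf{B}_n(\Theta_2))\simeq\mathbb{Z}$. The paper's proof is a single sentence that leaves the splitting implicit; you have correctly isolated the splitting as the only nontrivial point and given a valid justification via the free stable-letter summand from Lemma~\ref{lem:exactseq1} (your winding-number homomorphism is just another description of that same retraction, so the final paragraph is not needed).
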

\begin{proof}
This follows easily from the above exact sequence and $H_1(\mathbf{B}_n(\Theta_2))=\mathbb{Z}$.
\end{proof}

\subsubsection{Vertex-3-connected complexes}

We claim the following.
\begin{prop}\label{prop:3conn}
Let $X$ be a simple and vertex-3-connected complex. Then for $n\ge 2$,
\[
H_1(\mathbf{B}_n(X))=\begin{cases}
H_1(X)\oplus\langle[\sigma]\rangle&X\text{ is planar};\\
H_1(X)\oplus\langle[\sigma]\rangle/\langle 2[\sigma]\rangle&X\text{ is nonplanar}.
\end{cases}
\]
\end{prop}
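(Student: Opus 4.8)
The plan is to reduce, via the vertex-3-connectivity hypothesis, to the two cases already handled in the excerpt---a vertex-3-connected graph and an iterated closure of a $2$-dimensional elementary complex---and then to assemble the answer from Lemma~\ref{lem:biconn}, Corollary~\ref{cor:elementary} and Corollary~\ref{cor:closures}. Throughout I would work with the cokernel $C:=\coker(i_{X,n})$. Since $\bar\rho$ is onto and $i_{X,n}(H_1(X))\subset\ker\bar\rho$, the group $C$ always surjects onto $H_1(\mathbf{S}_n)\simeq\mathbb{Z}_2$, so it is never trivial for $n\ge 2$; the proposition is then equivalent to showing that $i_{X,n}$ is a split injection and that $C\simeq\mathbb{Z}$ when $X$ is planar and $C\simeq\mathbb{Z}_2$ when $X$ is nonplanar, with $[\sigma]$ a generator detected by $\bar\rho$. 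Hence it suffices to prove that $C$ is cyclic and to pin down its order.

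First I would establish that $C$ is cyclic. By Theorem~\ref{thm:simple} and Corollary~\ref{cor:2skeleton} we may assume $\dim X\le 2$. If $\dim X=1$ then $X$ is a vertex-3-connected graph; having no $1$-cut, Lemma~\ref{lem:biconn} gives $H_1(\mathbf{B}_n(X))\simeq H_1(\mathbf{B}_2(X))$, and both the cyclicity of $C$ and its order are exactly the graph statement of \cite{KKP}. If $\dim X=2$ and $X$ is elementary, Corollary~\ref{cor:elementary} is already the assertion. In the remaining case the genuinely new input is structural: because $X$ has no nontrivial $1$- or $2$-cut, Menger's lemma \cite{M} guarantees that any two branch vertices of $X$ are spanned by an embedded $\Theta_3$, exactly as in Example~\ref{ex:twotrees2}. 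This forces all the star-move generators $s_{i,j}$ attached to the branch vertices to be identified in $C$, while the stable letters $t_i$ map, by the commuting diagrams of Lemma~\ref{lem:exactseq1} and Lemma~\ref{lem:exactseq2}, onto the $H_1(\Theta_k)$-summands coming from the ambient space and hence already lie in $\im(i_{X,n})$, so they die in $C$. Consequently $C$ is generated by the single class $[\sigma]$. Equivalently, vertex-3-connectivity lets me present $X$ as an iterated $k$-closure, along single boundary components, of a $2$-dimensional elementary complex $Y$, and then invoke Corollary~\ref{cor:closures} directly.

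It remains to determine the order of $C=\langle[\sigma]\rangle$. If $X$ is planar, fix an embedding $\iota:X\to\mathbb{R}^2$; the induced $\iota_*:C\to\coker(i_{\mathbb{R}^2,n})\simeq\mathbb{Z}$ is nontrivial by equivariance of $\bar\rho$, and a cyclic group admitting a nonzero homomorphism to $\mathbb{Z}$ must be infinite cyclic, so $C\simeq\mathbb{Z}$. If $X$ is nonplanar then it contains $K_5$, $K_{3,3}$, or $S_0$; in the first two cases Corollary~\ref{cor:nonplanargraph} supplies $2$-torsion in $C$, and in the last case Lemma~\ref{lem:embed} produces a nonplanar elementary subcomplex $Y$ (a branched surface) with $\coker(i_{Y,n})\simeq\mathbb{Z}_2$ whose generator maps to $[\sigma]$, so that $2[\sigma]=0$. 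In either case $C$ is a nontrivial cyclic group killed by $2$, hence $C\simeq\mathbb{Z}_2$. The splitting and the injectivity of $i_{X,n}$ then follow from the direct-sum conclusions of Corollary~\ref{cor:closures} and Corollary~\ref{cor:elementary} through the structural reduction: when $C\simeq\mathbb{Z}$ the sequence $H_1(X)\to H_1(\mathbf{B}_n(X))\to C\to 0$ splits for free, and when $C\simeq\mathbb{Z}_2$ the order-two relation is transported from the elementary piece.

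The hardest part, and where I expect to spend the most care, is the structural step: verifying that vertex-3-connectivity genuinely forces $X$ into the iterated single-boundary-closure normal form over an elementary complex, ruling out hidden higher connected-sum decompositions, and---within that---checking both that the $t_i$ contribute nothing to $C$ and that every $s_{i,j}$ collapses to the same class under the $\Theta_3$-spanning provided by \cite{M}. The $S_0$ subcase of nonplanarity is the second delicate point, since there one must confirm that the order-two class of the branched-surface piece survives as the generator of $C$ rather than merely producing some even-order torsion.
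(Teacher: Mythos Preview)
Your overall shape is right---work with the cokernel $C=\coker(i_{X,n})$, show it is cyclic, and then pin down its order via a planar embedding or a torsion source---but the structural reduction contains a genuine gap. The claim that vertex-$3$-connectivity forces $X$ to be an iterated $k$-closure of a \emph{single} $2$-dimensional elementary complex is false: a vertex-$3$-connected complex can decompose as a nontrivial $k$-connected sum for $k\ge 3$ (for instance, two disks joined by three arcs, or more generally two branched-surface pieces glued along a $\Theta_k$), and such an $X$ is not an iterated closure of one elementary piece. Your direct Menger's-lemma argument is gesturing at the right phenomenon, but the star generators $s_{i,j}$ live in different connected-sum pieces, and you have no mechanism for showing they agree in $C$ across the gluing; that identification is exactly the missing content.

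The paper fills this by an induction through $k$-connected sums. Given $X=(Y,\vec v)\#(Z,\vec w)$, one reads off from the $2$-cut structure of $Y$ and $Z$ two trees $T_Y\subset Y_v$ and $T_Z\subset Z_w$ with $\partial T_Y=\lk(v)$, $\partial T_Z=\lk(w)$; vertex-$3$-connectivity of $X$ forces $\widehat T_Y\#\widehat T_Z$ to be vertex-$3$-connected, and a dedicated lemma (Lemma~\ref{lem:3conntrees}, making Example~\ref{ex:twotrees2} precise) computes its cokernel as $\mathbb{Z}$ or $\mathbb{Z}_2$ according to planarity. The exact sequence of Lemma~\ref{lem:exactseq2} then identifies $\coker(i_{X,n})$ with this tree cokernel, closing the induction. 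Only \emph{after} this may one assume $X$ is connected-sum indecomposable, whereupon the paper rules out the graph case and applies Corollary~\ref{cor:closures}. A secondary gap: in your $S_0$ branch of the nonplanar case, pushing a $2$-torsion class from $\coker(i_{Y,n})$ into the cyclic group $C$ only shows its image is an odd multiple of $[\sigma]$ of order dividing $2$, hence that $|C|$ divides some $4k+2$; this does not force $|C|=2$. The paper's inductive computation yields the exact order and avoids this defect.
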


This is a generalization of the result for vertex-3-connected graphs stated in \cite[Lemma~3.15]{KP}, and we can prove Theorem~\ref{thm:emb}~(3) with the aid of this proposition as follows.

\begin{proof}[Proof of Theorem~\ref{thm:emb}~(3)]
Since vertex-1 and 2-connected sums and 1-cut and 2-cut decompositions preserve planarity, $X$ is nonplanar if and only if $X$ has a nonplanar vertex-3-connected component with respect to 1-cut and 2-cut decompositions.

On the other hand, Lemma~\ref{lem:1cut} implies that $H_1(\mathbf{B}_n(X))$ has torsion if and only if one of its vertex-2-connected components does. However, Lemma~\ref{lem:2cut} does not imply directly the corresponding result because of $\mathbb{Z}$ summands.
Each $\mathbb{Z}$ summand is mapped to a summand of the homology group $H_1(\widetilde X_i)$ of a vertex-3-connected component $\widetilde X_i$ via the map induced from the embedding $\Theta_2=S^1\to \widetilde X_i$.
Hence $H_1(\mathbf{B}_n(X))$ has torsion for a vertex-2-connected complex $X$ if and only if one of its vertex-3-connected components does.

Finally, Proposition~\ref{prop:3conn} completes the proof.
\end{proof}

For the rest of the paper, we will prove Proposition~\ref{prop:3conn}. Hence from now on, we suppose that $X$ is a simple and vertex-3-connected complex. 

\begin{lem}\label{lem:3conntrees}
Let $T$ and $T'$ be trees with $k=\#(\partial T)=\#(\partial T')$. Suppose $\widehat T\#\widehat T'$ is vertex-3-connected.
Then Proposition~\ref{prop:3conn} holds for $\widehat T\#\widehat T'$.
\end{lem}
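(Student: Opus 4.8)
The plan is to reduce to the case $n=2$ and then read the answer off the explicit presentation in Example~\ref{ex:twotrees}. Since $\widehat T\#\widehat T'$ is vertex-3-connected it has, in particular, no $1$-cut, so Lemma~\ref{lem:biconn} gives $H_1(\mathbf{B}_n(\widehat T\#\widehat T'))\simeq H_1(\mathbf{B}_2(\widehat T\#\widehat T'))$ for every $n\ge 2$. Hence it suffices to compute the abelianization of the group presented in Example~\ref{ex:twotrees}, and then to match the result against the two cases of Proposition~\ref{prop:3conn}.

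First I would abelianize that presentation. Writing the relations additively, the amalgamation relation $s_{i,j}'=t_i^{-1}t_j^{-1}s_{i,j}^{-1}t_i t_j$ becomes $[s_{i,j}']=-[s_{i,j}]$, while the two tree relations identify the classes $[s_{i,j}]$ among themselves through $\sim_T$ and the classes $[s_{i,j}']$ through $\sim_{T'}$. The generators $[t_2],\dots,[t_k]$ together with the surviving swap classes span $H_1(\mathbf{B}_2(\widehat T\#\widehat T'))$, and one checks that the $[t_r]$ together with the cycle classes account exactly for $\operatorname{im}(i_{X,n})=H_1(X)$ (this is the content of the right-hand vertical isomorphisms in Lemmas~\ref{lem:exactseq1} and~\ref{lem:exactseq2}). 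Consequently $\coker(i_{X,n})$ is generated by the images of the $[s_{i,j}]$ alone.

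Next I would invoke vertex-3-connectivity. By Menger's lemma and the discussion in Example~\ref{ex:twotrees2}, for any branch point of $T$ and any branch point of $T'$ there is an embedded $\Theta_3$, so every generator of $\mathbf{B}_2(T)$ is linked through the relations to every generator of $\mathbf{B}_2(T')$; combining the $\sim_T$ and $\sim_{T'}$ identifications with $[s_{i,j}']=-[s_{i,j}]$ collapses all the swap classes to a single class $[\sigma]$ up to sign. Hence $\coker(i_{X,n})$ is \emph{cyclic}, generated by $[\sigma]$, and $\bar\rho([\sigma])$ is the nontrivial element of $H_1(\mathbf{S}_n)\simeq\mathbb{Z}_2$.

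The only remaining point -- and the main obstacle -- is to determine the order of $[\sigma]$, which is exactly where planarity enters. Tracking signs around a closed chain of identifications returns $[\sigma]$ to itself either with the same sign or with the opposite sign, and the parity of such reversals is precisely a (non)planarity invariant of $\widehat T\#\widehat T'$. When $X$ is planar I would argue as in Corollary~\ref{cor:closures}: the embedding $\iota\colon X\to\mathbb{R}^2$ induces a nontrivial map $\iota_*\colon\coker(i_{X,n})\to\coker(i_{\mathbb{R}^2,n})\simeq\mathbb{Z}$, which on the cyclic group $\langle[\sigma]\rangle$ must be injective, so $\langle[\sigma]\rangle\simeq\mathbb{Z}$. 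When $X$ is nonplanar the orientation-reversing chain yields the relation $2[\sigma]=0$; since $X$ is itself a nonplanar graph, Corollary~\ref{cor:nonplanargraph} confirms that $\coker(i_{X,n})$ has $2$-torsion, so the cyclic group is forced to be $\langle[\sigma]\rangle\simeq\mathbb{Z}_2$. Finally, because $H_1(X)$ is free abelian the sequence $0\to H_1(X)\to H_1(\mathbf{B}_n(X))\to\coker(i_{X,n})\to 0$ splits (in the nonplanar case after choosing the representative $[\sigma]$ with $2[\sigma]=0$ supplied by the presentation), giving $H_1(\mathbf{B}_n(X))\simeq H_1(X)\oplus\langle[\sigma]\rangle$ exactly as in the statement of Proposition~\ref{prop:3conn}.
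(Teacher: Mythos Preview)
Your proposal is correct and follows essentially the same approach as the paper: reduce to $n=2$ via Lemma~\ref{lem:biconn}, abelianize the presentation from Example~\ref{ex:twotrees}, and use vertex-3-connectivity (Example~\ref{ex:twotrees2}) to conclude that the cokernel is cyclic with order determined by planarity. Your treatment is in fact slightly more thorough than the paper's, which handles only the nonplanar case explicitly (via the ``twisted gluing'' forcing $2[s]=0$) and leaves the planar case and the splitting of the extension implicit.
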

\begin{proof}
This is a special case of Lemma~3.15 in \cite{KP}, and we introduce a new proof.

By Lemma~\ref{lem:biconn}, it suffices to consider $H_1(\mathbf{B}_2(\widehat T\#\widehat T'))$.
Recall the group presentation for $\mathbf{B}_2(\widehat T\#\widehat T')$ from Example~\ref{ex:twotrees}. Then its abelianization has a presentation as follows.
\[
H_1(\mathbf{B}_2(\widehat T\#\widehat T'))=\mathbb{Z}^{k-1}\oplus\bigoplus_{2\le i<j\le k}\langle [s_{i,j}]\rangle\bigg/ 
\left\langle [s_{i,j}]-[s_{i',j'}] \left| \begin{matrix}(i,j)\sim_T(i',j')\text{ or}\\(i,j)\sim_{T'}(i',j')\end{matrix}\right.\right\rangle.
\]

Then as shown in Example~\ref{ex:twotrees2}, vertex-3-connectedness implies that the two equivalence relations $\sim_T$ and $\sim_{T'}$ are engaged with each other so that they make all $[s_{i,j}]$'s equivalent. 
Hence the cokernel of $i_{\widehat T\#\widehat T',2}$ is generated by a single element $[s]$. 

If $\widehat T\#\widehat T'$ is nonplanar, then they must be glued in a twisted way. That is, one of $[s_{i,j}]$'s must be identified with its inverse $-[s_{i,j}]$, and therefore $s$ is 2-torsion since 
\[2[s]=[s_{i,j}]-(-[s_{i,j}])=0.\]
\end{proof}

\begin{prop}\label{prop:connectedsum}
Let $X=Y\# Z$. Suppose that Proposition~\ref{prop:3conn} holds for all vertex-3-connected subcomplexes of $Y$ and $Z$. Then it does for $X$ as well.
\end{prop}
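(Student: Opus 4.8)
The plan is to reduce everything to a computation of the cokernel $\coker(i_{X,n})$ of the point-pushing map $i_{X,n}\colon H_1(X)\to H_1(\mathbf{B}_n(X))$, since Proposition~\ref{prop:3conn} is equivalent to the assertion that this cokernel is $\mathbb{Z}$ when $X$ is planar and $\mathbb{Z}_2$ when $X$ is nonplanar (the direct-sum form $H_1(\mathbf{B}_n(X))\simeq H_1(X)\oplus\coker(i_{X,n})$ then holding exactly as in the elementary case, Corollary~\ref{cor:elementary}). Writing $X=Y\# Z$ as a $k$-connected sum, I would first feed the commutative diagram of Lemma~\ref{lem:exactseq2} into the snake lemma. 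As the top row is short exact and the bottom row is right exact, this yields the six-term sequence ending in
\[
\coker(i_{\Theta_k,n})\xrightarrow{\ \psi\ }\coker(i_{Y,n})\oplus\coker(i_{Z,n})\longrightarrow\coker(i_{X,n})\longrightarrow 0,
\]
so that $\coker(i_{X,n})=\coker(\psi)$ for $\psi=(\tilde\Psi_Y)_\ast\oplus(-\tilde\Psi_Z)_\ast$. This exhibits $\coker(i_{X,n})$ as the amalgamation of $\coker(i_{Y,n})$ and $\coker(i_{Z,n})$ over the classes coming from $\Theta_k$.

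Next I would identify the pieces. By Lemma~\ref{lem:biconn} we have $\coker(i_{\Theta_k,n})\simeq\mathbb{Z}^{\binom{k-1}2}$, freely generated by the classes $[\sigma_{i,j}]$ with $2\le i<j\le k$, and by the defining relation $s_{i,j}'=t_i^{-1}t_j^{-1}s_{i,j}^{-1}t_it_j$ of the connected sum (Example~\ref{ex:twotrees}, Theorem~\ref{thm:connectedsum}) the map $\psi$ abelianizes to the identification $[s_{i,j}']=-[s_{i,j}]$. Thus $\coker(i_{X,n})$ is obtained from $\coker(i_{Y,n})\oplus\coker(i_{Z,n})$ by gluing the image of each $[\sigma_{i,j}]$ in the two summands with opposite sign, which is exactly the mechanism of Lemma~\ref{lem:3conntrees} with the trees replaced by $Y$ and $Z$. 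By the inductive hypothesis each of $\coker(i_{Y,n})$, $\coker(i_{Z,n})$ is cyclic, generated by a single class which, as in Example~\ref{ex:twotrees2}, is a signed copy of the $[s_{i,j}]$'s. Vertex-3-connectivity of $X$ then provides (via Menger's theorem, Lemma~\cite{M}) an embedded $\Theta_3$ joining the two cut vertices, which forces the generator of $\coker(i_{Y,n})$ to be identified with that of $\coker(i_{Z,n})$; hence $\coker(i_{X,n})$ is again cyclic, generated by a single class $[\sigma]$.

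It then remains to decide between $\mathbb{Z}$ and $\mathbb{Z}_2$. If $X$ is planar, an embedding $X\to\mathbb{R}^2$ induces a surjection $\coker(i_{X,n})\to\coker(i_{\mathbb{R}^2,n})\simeq\mathbb{Z}$ that is nontrivial by $\bar\rho$-equivariance, and a cyclic group surjecting onto $\mathbb{Z}$ must itself be $\mathbb{Z}$. If $X$ is nonplanar, then $X$ contains a nonplanar graph, so $\coker(i_{X,n})$ has $2$-torsion by Corollary~\ref{cor:nonplanargraph}; being cyclic and still surjecting onto $H_1(\mathbf{S}_n)\simeq\mathbb{Z}_2$, it must be $\mathbb{Z}_2$. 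Concretely, nonplanarity means the gluing is twisted, so some $[s_{i,j}]$ is identified with $-[s_{i,j}]$, giving $2[\sigma]=0$ directly.

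I expect the main obstacle to be the cyclicity step: one must show that the single-generator property of $\coker(i_{Y,n})$ and $\coker(i_{Z,n})$ genuinely propagates across the amalgamation rather than leaving two independent generators, and that the relevant class $[\sigma_{i,j}]$ is nonzero in both summands so that the identification is non-degenerate. This is precisely where vertex-3-connectivity is used through Menger's theorem, and where the bookkeeping of signs must be kept consistent so that twisted (nonplanar) gluings are correctly detected as producing $2$-torsion. The remaining verifications, namely that the $\bar\rho$-splitting is compatible with the snake sequence and that the $H_1(X)$ summand behaves as recorded in Lemma~\ref{lem:exactseq2}, are routine.
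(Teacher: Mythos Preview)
Your overall strategy---pass to cokernels via Lemma~\ref{lem:exactseq2}, show the result is cyclic, then decide $\mathbb{Z}$ versus $\mathbb{Z}_2$ by planarity---matches the paper's. But there is a real gap at the point where you invoke the inductive hypothesis. You write ``By the inductive hypothesis each of $\coker(i_{Y,n})$, $\coker(i_{Z,n})$ is cyclic.'' The hypothesis, however, is only that Proposition~\ref{prop:3conn} holds for the \emph{vertex-$3$-connected subcomplexes} of $Y$ and $Z$, not for $Y$ and $Z$ themselves. In general $Y$ and $Z$ are only vertex-$2$-connected: removing the connected-sum vertex $v$ (or $w$) leaves a connected complex, but there may well be $2$-cuts in $Y$ containing $v$. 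When that happens $\coker(i_{Y,n})$ is \emph{not} cyclic; it has one generator for each vertex-$3$-connected component of $Y$.

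The paper handles exactly this by passing through the tree of vertex-$3$-connected components. From the $2$-cut decomposition of $Y$ one extracts a tree $T_Y\subset Y_v$ with $\partial T_Y=\lk(v)$, whose internal vertices index the vertex-$3$-connected pieces of $Y$; similarly $T_Z$ for $Z$. The hypothesis then gives that $\coker(i_{Y,n})$ is generated by $r_2(T_Y)$ elements (one $[s_{i,j}]$ per internal vertex), and the key identification is that the quotient of $\coker(i_{Y,n})\oplus\coker(i_{Z,n})$ by the image of $\coker(i_{\Theta_k,n})$ is precisely $\coker(i_{\widehat T_Y\#\widehat T_Z,n})$. Vertex-$3$-connectivity of $X$ forces $\widehat T_Y\#\widehat T_Z$ to be vertex-$3$-connected, and now Lemma~\ref{lem:3conntrees} (the graph case) gives cyclicity and the $\mathbb{Z}$/$\mathbb{Z}_2$ dichotomy in one stroke. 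Your Menger-style argument is essentially what Lemma~\ref{lem:3conntrees} does, but you are applying it one level too high: it must be run on the tree-of-components graph $\widehat T_Y\#\widehat T_Z$, not directly on $Y$ and $Z$. A secondary issue: your nonplanar case leans on Corollary~\ref{cor:nonplanargraph}, which needs a nonplanar \emph{graph} inside $X$; when the nonplanarity comes from a $2$-dimensional piece (say $S_0$ or a nonorientable surface) this is not immediate, whereas the paper's route through the planarity of the components and of $\widehat T_Y\#\widehat T_Z$ avoids that distinction.
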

\begin{proof}
Let $X=(Y,\vec v)\# (Z,\vec w)$ for $\lk(\vec v) =(v_1,\dots,v_k)$ and $\lk(\vec w)=(w_1,\dots,w_k)$. Then both $Y_v$ and $Z_z$ are connected by definition, and therefore both $Y$ and $Z$ are vertex-2-connected.

However, in general, $Y$ and $Z$ are not necessarily vertex-3-connected, and if a 2-cut exists in $Y$ or $Z$, then one of the two vertices is precisely $v$ or $w$, respectively.
Then from the 2-cut decompositions for $Y$ and $Z$, we can obtain two trees $T_Y\subset Y_v$ and $T_Z\subset Z_w$ such that $\partial T_Y=\lk(v)$ and $\partial T_Z=\lk(w)$, where vertices in $T_Y$ and $T_Z$ correspond to vertex-3-connected components in $Y$ and $Z$, respectively.
Since $X$ is vertex-3-connected, so is $\widehat T_Y\#\widehat T_Z$ by construction.

Moreover, by the assumption about $Y$ and $Z$, $\coker(i_{Y,n})$ and $\coker(i_{Z,n})$ are generated by $r_2(T_Y)$ and $r_2(T_Z)$ elements, respectively.

The commutative diagram in Lemma~\ref{lem:exactseq2} produces an exact sequence
\[\coker(i_{\Theta_k,n})\to\coker(i_{Y,n})\oplus\coker(i_{Z,n})\to\coker(i_{X,n})\to 1.\]

However, the quotient of $\coker(i_{Y,n})\oplus\coker(i_{Z,n})$ by the image of $\coker(i_{\Theta_k,n})$ is nothing but $\coker(i_{\widehat T_Y\#\widehat T_Z,n})$ and by Lemma~\ref{lem:3conntrees}, it is either $\mathbb{Z}$ or $\mathbb{Z}_2$.

Finally, it is $\mathbb{Z}_2$ if and only if either one of vertex-3-connected components of $Y$ and $Z$ is nonplanar, or both $Y$ and $Z$ are planar but $\widehat T_Y\#\widehat T_Z$ is nonplanar. Since these conditions are equivalent to the nonplanarity of $X$, we are done.
\end{proof}

Therefore we may assume furthermore that $X$ is not decomposable in a nontrivial way via $k$-connected sum for all $k\ge 1$. However, note that $X$ might be expressible as a closure.

Indeed, there is no such complex of dimension 1, a graph, as follows. If it exists, then it has at least 3 vertices of valency $\ge 3$ by vertex-3-connectedness, but at most 3 as well since it is always decomposable when a graph has at least 4 vertices of valency $\ge 3$ as follows. First we divide the set $V(\Gamma)$ of vertices of valency $\ge3$ into two parts $V_1$ and $V_2$, where both the full subgraphs $\Gamma_1$ and $\Gamma_2$ containing $V_1$ and $V_2$ are connected.
Then for each $i$, consider the closure of the complement of $\st(\Gamma_i)$ along its boundary. Then $\Gamma$ is nothing but the connected sum of these two complexes. Hence it is decomposable, and therefore the only possibilities are $\Theta_{a,b,c}$ defined above. Since vertex-3-connectedness implies the absence of multiple edges, two of $a,b$ and $c$ must be 1. This is a contradiction.

Therefore $X$ must be obtained by taking closures several times of an elementary complex of dimension 2. 
However, this case has been treated already in Corollary~\ref{cor:closures}. This completes the proof of Proposition~\ref{prop:3conn}.

\appendix
\section{More deformations}
All the attaching cells we consider previously induce braid equivalences. However we can consider more deformations which do not induce braid equivalences, but are helpful in computing braid groups. 
In this section, we introduce several 2-dimensional deformations $\iota:X\to Y$ such that $\iota$ induces a surjection between braid groups, and try to characterize their kernel. Hence we may use these deformations to compute $\mathbf{B}_n(Y)$ from $\mathbf{B}_n(X)$ which is already known.

For example, by using the deformations we will introduce below, one can deform a trivalent, vertex-3-connected graph into a surface by keeping the cokernel of $i_{(\cdot),n}:H_1(\cdot)\to H_1(\mathbf{B}_n(\cdot))$ unchanged. Therefore one may deduce another proof for Proposition~\ref{prop:3conn}.

\begin{defn}
An embedding $\bar\iota:X\to Y$ is a {\em local deformation of type $\iota:U\to V$ via $i:U\to X$} if $i(U)=\overline{\st(K)}$ for some $K\subset X$ and $Y$ is the push-out of $\iota$ and $i$.
\[
\xymatrix{
X\ar[r]^{\bar\iota}&Y\\
U\ar[u]^i\ar[r]_\iota& V\ar[u]_{\bar i}
}
\]
\end{defn}

\subsection{Attaching a disk at a trivalent vertex}
Let $v$ be a trivalent vertex in $X$ which we attach a disk on, and let $\bar D$ be the result as depicted in Figure~\ref{fig:attachingdisk}.

Suppose $v$ is not a 1-cut of $X$. Then attaching a disk is nothing but $3$-connected sum with $\widehat D_3$, where $\widehat D_3$ is a closure of a disk along 3 points in the boundary. So Theorem~\ref{thm:connectedsum} and Lemma~\ref{lem:exactseq2} are applicable. We denote $X\# \widehat D_3$ by $\overline X$. 

\begin{figure}[ht]
\[T_3=\vcenter{\hbox{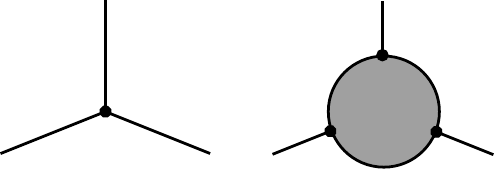}}=\bar D\]
\caption{Attaching a disk on a trivalent vertex}
\label{fig:attachingdisk}
\end{figure}

\begin{prop}
If $v$ is not a 1-cut of $X$, then the 2-braid groups $\mathbf{B}_2(X)$ and $\mathbf{B}_2(\overline X)$ are isomorphic via $\iota_*$.
\end{prop}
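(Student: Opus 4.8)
The plan is to realize the proposition as a direct consequence of the $k$-connected sum formula in Theorem~\ref{thm:connectedsum}. Since $v$ is not a $1$-cut, $X_v$ is connected, so attaching the disk is exactly the $3$-connected sum $\overline X=(X,\vec v)\#(\widehat D_3,\vec w)$ as noted before the statement, and the embedding $\iota$ is the inclusion of the $X$-summand, i.e.\ $\iota_*=\Phi_X$. Applying Theorem~\ref{thm:connectedsum} with $n=2$ and $k=3$ gives
\[
\mathbf{B}_2(\overline X)=\mathbf{B}_2(X)\ast_{\mathbf{B}_2(\Theta_3)}\mathbf{B}_2(\widehat D_3)\big/\langle\!\langle[\mathbf{B}_r(X_v),\mathbf{B}_s(D)],\,r+s=2\rangle\!\rangle,
\]
where $D=(\widehat D_3)_w$ is the disk. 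The goal is to show that both the amalgamation and the commutator relations collapse, leaving $\Phi_X$ an isomorphism.

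First I would compute $\mathbf{B}_2(\widehat D_3)$. The three cone points lie on the single boundary circle $\partial D$, so Corollary~\ref{cor:boundary} applies and, since $\mathbf{B}_1(D)=\pi_1(D)=1$, yields $\mathbf{B}_2(\widehat D_3)\simeq\mathbf{B}_2(D)\ast\langle u_2,u_3\rangle$. As $\mathbf{B}_2(D)=\mathbb{Z}$, this is free of rank $3$. The crucial geometric point is that the generator $s'_{2,3}$---the image of the generator of $\mathbf{B}_2(T_3)$ under the tripod $T_Y\subset D$ used to build $\tilde\Psi_{\widehat D_3}$---is precisely a generator of $\mathbf{B}_2(D)=\mathbb{Z}$; that is, the inclusion $T_3\hookrightarrow D$ of a tripod with its endpoints on $\partial D$ induces an isomorphism on $2$-braid groups. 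Granting this, $\{s'_{2,3},u_2,u_3\}$ is a free basis of $\mathbf{B}_2(\widehat D_3)$.

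Next I would compare this with $\mathbf{B}_2(\Theta_3)$, which by Example~\ref{ex:theta} is free of rank $3$ on $\{s_{2,3},t_2,t_3\}$. By construction $\tilde\Psi_{\widehat D_3}$ sends $s_{2,3}\mapsto s'_{2,3}$ and $t_i\mapsto u_i$, hence carries a free basis to a free basis and is an isomorphism. Therefore the amalgamated factor $\mathbf{B}_2(\widehat D_3)$ is absorbed into $\mathbf{B}_2(X)$ through $\tilde\Psi_X$, giving $\mathbf{B}_2(X)\ast_{\mathbf{B}_2(\Theta_3)}\mathbf{B}_2(\widehat D_3)\simeq\mathbf{B}_2(X)$. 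It remains to see that the normal closure of the commutators $[\mathbf{B}_r(X_v),\mathbf{B}_s(D)]$ with $r+s=2$ is trivial: this is immediate because $\mathbf{B}_s(D)=1$ for $s\le 1$ while $\mathbf{B}_0(X_v)=1$, so the only surviving factor $\mathbf{B}_2(D)$ pairs solely with the trivial group. Putting these together gives $\mathbf{B}_2(\overline X)\simeq\mathbf{B}_2(X)$ realized by $\Phi_X=\iota_*$.

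The main obstacle is the geometric claim of the second paragraph, namely that $s'_{2,3}$ generates $\mathbf{B}_2(D)=\mathbb{Z}$, equivalently that the tripod inclusion induces an isomorphism $\mathbf{B}_2(T_3)\to\mathbf{B}_2(D)$. I would verify this directly from the explicit description of the generating loop $s$ in Example~\ref{ex:tree} and Figure~\ref{fig:generator}: swapping the two points around the tripod embedded in $D$ realizes the half-twist that generates $\mathbf{B}_2(D)$. Once this is in hand the remaining steps are purely formal Tietze eliminations, so I expect no further difficulty.
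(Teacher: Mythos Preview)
Your proposal is correct and follows exactly the same approach as the paper's proof, which reads in its entirety: ``By Corollary~\ref{cor:boundary}, $\mathbf{B}_2(\widehat D_3)\simeq\mathbf{B}_2(D^2)\ast\langle t_2, t_3\rangle$, and $\mathbf{B}_2(D^2)\simeq\mathbf{B}_2(T_3)\simeq\mathbb{Z}$. Hence the assertion follows from Theorem~\ref{thm:connectedsum}.'' You have simply unpacked what ``follows from'' means here---that $\tilde\Psi_{\widehat D_3}$ carries a free basis to a free basis (this is exactly the content of $\mathbf{B}_2(D^2)\simeq\mathbf{B}_2(T_3)$) and that the commutator relations vanish for $n=2$ because one factor in each bracket is trivial.
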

\begin{proof}
By Corollary~\ref{cor:boundary}, $\mathbf{B}_2(\widehat D_3)\simeq\mathbf{B}_2(D^2)\ast\langle t_2, t_3\rangle$, and $\mathbf{B}_2(D^2)\simeq\mathbf{B}_2(T_3)\simeq\mathbb{Z}$. Hence the assertion follows from Theorem~\ref{thm:connectedsum}.
\end{proof}

In general, we consider the local deformation $\bar\iota:X\to Y$ of type $\iota:T_3\to \bar D$ via $i:T_3\to X$, and let $\bar\iota_*:\mathbf{B}_n(X)\to\mathbf{B}_n(Y)$ be the induced homomorphism from $\bar\iota$. 

\begin{prop}
The map $\bar\iota_*$ is surjective and its kernel $\ker \bar\iota_*$ is generated by $i_*(\ker\iota_*)$.
\end{prop}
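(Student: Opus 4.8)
The plan is to exploit that the entire deformation is supported in the closed star $N:=i(U)=\overline{\st(v)}$, so that $Y$ is the push-out $X\cup_N \bar i(\bar D)$ with $X\cap \bar i(\bar D)=N$, and to compare van~Kampen--type decompositions of $B_n(X)$ and $B_n(Y)$ stratified by how many configuration points lie in $N$. Throughout I read ``generated'' as normally generated, and write $\langle\!\langle\,\cdot\,\rangle\!\rangle$ for the normal closure, since kernels are normal while $i_*(\ker\iota_*)$ need not be.

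The inclusion $\langle\!\langle i_*(\ker\iota_*)\rangle\!\rangle\subseteq\ker\bar\iota_*$ is the easy half. By the definition of a local deformation the square commutes, so $\bar\iota\circ i=\bar i\circ\iota$ as maps $U\to Y$, whence $\bar\iota_*\circ i_*=\bar i_*\circ\iota_*$. If $\kappa\in\ker\iota_*$ then $\bar\iota_*(i_*(\kappa))=\bar i_*(\iota_*(\kappa))=e$, so $i_*(\ker\iota_*)\subseteq\ker\bar\iota_*$; as $\ker\bar\iota_*$ is normal, it contains the normal closure. For surjectivity I would argue exactly as in the proof of Proposition~\ref{prop:highercell}, by general position. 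After subdivision any generator of $\mathbf{B}_n(Y)$ is a loop of configurations; a solitary excursion of one point into the open $2$-cell is undone by a radial homotopy since the cell is contractible, while the only genuinely two-dimensional phenomenon, two points braiding simultaneously inside the cell, represents a power of the generator of $\mathbf{B}_2(D^2)\cong\mathbb{Z}$, which is already realised by two points braiding in $N\subset X$ and hence lies in $\im\bar\iota_*$. Thus $\bar\iota_*$ is onto.

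The hard half is $\ker\bar\iota_*\subseteq\langle\!\langle i_*(\ker\iota_*)\rangle\!\rangle$. Here I would decompose both $B_n(X)$ and $B_n(Y)$ according to the number $j$ of points lying in $N$. Because the deformation does not touch $X\setminus N$, the strata supported off $N$ and the joining paths between them are literally the same for $X$ and for $Y$; only the local stratum changes, contributing $\pi_1 B_j(\bar D)=\mathbf{B}_j(\bar D)$ in place of $\mathbf{B}_j(T_3)$. Applying the Seifert--van~Kampen analysis of Theorem~\ref{thm:connectedsum} to these compatible decompositions---or, when $v$ is not a $1$-cut, simply comparing $Y=X\#\widehat D_3$ with $\bar D=T_3\#\widehat D_3$, connected sums with the \emph{same} summand $\widehat D_3$ at the valency-$3$ vertex---shows that $\mathbf{B}_n(Y)$ admits the same presentation as $\mathbf{B}_n(X)$ except that the local fibre contributes the relators of $\mathbf{B}_j(\bar D)$ rather than those of $\mathbf{B}_j(T_3)$. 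The difference is precisely the set of relators trivialising $\ker\!\big(\mathbf{B}_j(T_3)\to\mathbf{B}_j(\bar D)\big)$, pushed into $\mathbf{B}_n(X)$ via $i$ together with the point-appending maps of Proposition~\ref{prop:injection}. Summed over $j\le n$ these relators normally generate $\langle\!\langle i_*(\ker\iota_*)\rangle\!\rangle$, which gives the desired inclusion and hence the equality $\ker\bar\iota_*=\langle\!\langle i_*(\ker\iota_*)\rangle\!\rangle$.

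The main obstacle will be the final step: one must verify that in both decompositions the transport (conjugation) data relating the local fibre to the remaining points agree, so that replacing $T_3$ by $\bar D$ really changes nothing outside the local relators. The connected-sum shortcut dispatches the case where $v$ is not a $1$-cut, but the $1$-cut case has no such presentation available and forces one to carry out this bookkeeping by hand, tracking how the paths $\gamma_i$ and the stable letters interact with the newly filled cell.
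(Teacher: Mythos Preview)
Your approach is genuinely different from the paper's, and the gap you flag at the end is real: the $1$-cut case is not handled by your argument, and even in the non-$1$-cut case the claim that the two complex-of-groups presentations differ \emph{only} in the local vertex groups requires checking that all the transport data (the paths $\gamma_i$, $\delta_i$, the $s_{i,j}$, the corner labels) are literally identical for $X$ and $Y$---this is plausible but not established by invoking Theorem~\ref{thm:connectedsum} alone.

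The paper avoids this case distinction entirely by a direct geometric argument. It realises $\bar D$ explicitly as $T_3\cup D^2\subset\mathbb{R}^2$ and uses the vertical projection $r:\bar D\to T_3$ (extended to $r:Y\to X$). The failure locus $B_n^{r\text{-fail}}\subset B_n(Y)$, where two configuration points lie on the same vertical ray in $D^2$, has codimension~$1$; a loop meets it in finitely many points, and at each such point one can slide the lower of the two offending points down the $(-y)$-axis (the third leg of $T_3$) to push the loop into $B_n(X)$, proving surjectivity. For the kernel, a null-homotopy $f:(D^2,\partial D^2)\to(B_n(Y),B_n(X))$ has $1$-dimensional failure locus $F=f^{-1}(B_n^{r\text{-fail}})$: a union of arcs whose transverse double points (two simultaneous failures on distinct rays, four points involved) encode the commuting relation $\sigma_i\sigma_j=\sigma_j\sigma_i$, and whose triple points (three points on one ray) encode the Yang--Baxter relation $\sigma_i\sigma_{i+1}\sigma_i=\sigma_{i+1}\sigma_i\sigma_{i+1}$. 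Small loops around these intersection points, once pushed into $B_n(X)$ as above, land in $i_*(\ker\iota_*)$; after perturbing away arcs with no intersections, these loops normally generate $\ker\bar\iota_*$.

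This failure-locus technique is insensitive to whether $v$ is a $1$-cut, since it operates purely inside $\bar i(\bar D)$; by contrast your presentation-comparison argument depends on the connected-sum description $Y=X\#\widehat D_3$, which is unavailable precisely when $X_v$ is disconnected.
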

\begin{proof}
We first regard $T_3$ and $\bar D$ as subspaces of $\mathbb{R}^2$ as follows.
\begin{align*}
T_3&=\{(x,0)||x|\le2\} \cup \{(0,y)|-1\le y\le 0\},\\
D^2&=\{(x,y)||x|\le 1, 0\le y\le 1\},\\ 
\bar D &= T_3\cup D.
\end{align*}

\begin{figure}[ht]
\[
\xymatrix@C=3pc{
T_3=\vcenter{\hbox{\includegraphics{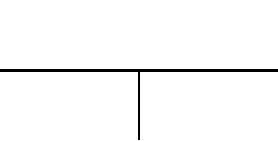}}}\ar@<-.3pc>[r]_{\iota}\qquad&\qquad
\vcenter{\hbox{\includegraphics{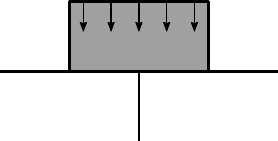}}}=\bar D\ar@<-.3pc>[l]_r
}
\]
\caption{A complex $\bar D$ and the projection $r$}
\label{fig:barD}
\end{figure}

We consider a projection (or strong deformation retract) $r$ of $\bar D$ onto $T_3$ that projects $D^2$ to $x$-axis, and it extends to the projection of $Y$ onto $X$. Then similar to the proof of Proposition~\ref{prop:highercell}, $r$ does not induce a map between configuration spaces, and let $B_n^{r\text{-fail}}$ be the subspace of $B_n(Y)$ consisting of configurations $\mathbf{x}$ that $r$ can not act on.

Since $B_n^{r\text{-fail}}$ is of codimension 1, a path $\gamma(t)$ in $B_n(Y)$ in general position intersects this space finitely many times.
Indeed, it happens only if exactly two points in $\gamma(t)\in B_n(Y)$ are lying in the ray $\{(x_0,y)|0\le y\le 1\}$ for some $x_0$. However, it can be homotoped to $\gamma'$ so that $x_0=0$, and homotoped into $B_n(X)$ by using $(-y)$-axis as desired.

Let $f=\{f_1,\dots, f_n\}:(D^2,\partial D^2)\to(B_n(Y),B_n(X))$ be a homotopy disk in general position with respect to $B_n^{r\text{-fail}}$. Then a failure locus $F=f^{-1}(B_n^{r\text{-fail}})$ is a 1-dimensional subcomplex of $D^2$ away from $\partial D^2$. Indeed, $F$ is a union of arcs (including circles) which intersect {\em pairwise transversely}, and is not necessarily closed.

Suppose that we travel along a path $\gamma(t)$ in $D^2$ which passes through one of arcs of $F$ at $t_0$.
Then there are $i$ and $j$ such that $f_i(\gamma(t_0))$ and $f_j(\gamma(t_0))$ make a failure of $r$.
Moreover, the sign of the difference of $x$-coordinates of these two paths must change near $t_0$ since $f$ is in general position. 
That is, two paths $f_i(\gamma(t))$ and $f_j(\gamma(t))$ really make a crossing as usual.

Note that there are two types of intersection points between arcs in $F$, that is, either two arcs or three arcs intersect at one point in $D^2$.

The former case happens when the failure occurs at two different rays simultaneously, therefore four points of a configuration are involved.
On the other hand, the latter case happens when the failure occurs at a single ray but three points are lying on it simultaneously. See Figure~\ref{fig:failurelocus}.

\begin{figure}[ht]
\[
f:\vcenter{\hbox{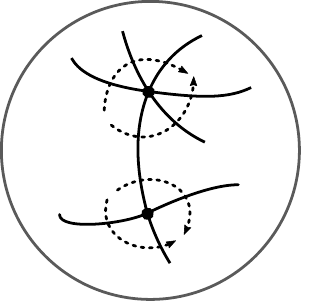}}\longrightarrow B_n(Y)
\]
\[
\vcenter{\hbox{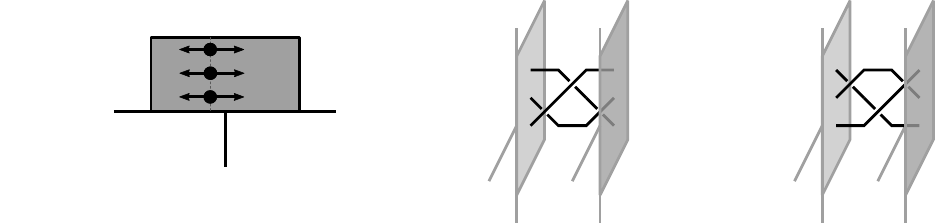}}
\]
\[
\vcenter{\hbox{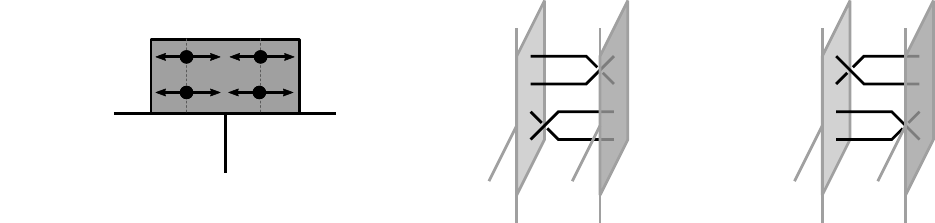}}
\]
\caption{A failure locus for the projection $r$ and curves near by intersections}
\label{fig:failurelocus}
\end{figure}

Evidently, the small loops $\delta$'s around these two kinds of intersections correspond to the braid relations such that $\sigma_i\sigma_j=\sigma_j\sigma_i$ for $|i-j|>1$ and $\sigma_i\sigma_{i+1}\sigma_i=\sigma_{i+1}\sigma_i\sigma_{i+1}$, which can be considered as images of $\ker(\iota_*)$.

Now we perturb $f$ near $\delta$ so that $f(\delta)$ belongs to $B_n(X)$ as before. Then $r(f(\delta))$ is lying in $\ker(\iota_*)$ by definition.
Perturb $f$ further so that all arcs without intersection point disappear in a failure locus.
Then the failure locus consists of those near intersection points, which implies that $f\in \ker(\bar\iota_*)$ is generated by $i_*(\ker(\iota_*))$ by choosing paths from the basepoint of $\partial D^2$ to a point in each $\delta_i$.
\end{proof}

\begin{rmk}
Let $\iota_{n*}:\mathbf{B}_n(T_3)\to\mathbf{B}_n(\bar D)$.
Since $\ker(\iota_{n*})$ is generated by two braid relations which can be considered as 3- or 4-braids, it is generated by the images of $\ker(\iota_{4*})$ and $\ker(\iota_{3*})$ under the maps
\[
\mathbf{B}_m(T_3)\to\mathbf{B}_n(T_3)\stackrel{i_*}{\longrightarrow}\mathbf{B}_n(\bar D),
\]
for $m=3$ or $4$. Therefore $\ker(\bar\iota_*)$ is also generated by these images of them under the compositions with $i_*$.
\end{rmk}

\subsection{Edge-to-band replacement}
Let $D_i$ be {\em oriented} disks, and $D_1\vee_\partial D_2=D_1\sqcup D_2/*_1\sim*_2$ be the boundary wedge sum of $D_i$'s where $*_i\in\partial D_i$. By Proposition~\ref{prop:edgecontraction}, this is braid equivalent to the 1-connected sum $D_1\# D_2$, joining two disks by an edge. For the sake of convenience, we use the wedge sum instead.
Then by Theorem~\ref{thm:connectedsum}, 
\[\mathbf{B}_2(D_1\vee_\partial D_2)=\mathbf{B}_2(D_1)\ast\mathbf{B}_2(D_2)=\langle\sigma_{1,1},\sigma_{1,2}\rangle,\]
where $\sigma_{1,i}$ is the generator for $\mathbf{B}_2(D_i)$ such that $[\sigma_{1,i}^2]\in H_1(D_i\setminus\{0\})=H_1(\partial D_i)\simeq H_2(D_i,\partial D_i)$ corresponds to the orientation of $D_i$.
\begin{figure}[ht]
\[
\vcenter{\hbox{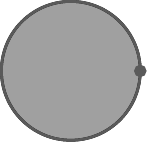}}\vee_\partial
\vcenter{\hbox{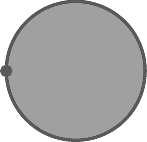}}=
\vcenter{\hbox{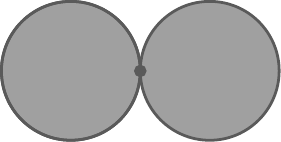}}
\]
\caption{A boundary wedge sum of two disks}
\label{fig:disks}
\end{figure}

Note that any orientation preserving embedding $\iota:D_1\vee_\partial D_2\to D^2$ induces an homomorphism $\iota_*$ between braid groups which maps both $\sigma_{1,i}$ to $\sigma_1$.
Therefore the kernel $\ker \iota_*$ is generated by $\sigma_{1,1}\sigma_{1,2}^{-1}$.
Moreover, even for $n\ge 3$, the kernel of $\mathbf{B}_n(D_1\vee_\partial D_2)\to\mathbf{B}_n(D^2)$ is generated by the image of $\ker \iota_*$ under the inclusion map $i_*:\mathbf{B}_2(D_1\vee_\partial D_2)\to\mathbf{B}_n(D_1\vee_\partial D_2)$ coming from Proposition~\ref{prop:injection}. This follows easily from the group presentations for $\mathbf{B}_n(D_1\vee_\partial D_2)$ and $\mathbf{B}_n(D^2)$.

As before, let us consider a local deformation $\bar\iota:X\to Y$ of type $\iota:D_1\vee_\partial D_2\to D^2$ via $i$.
Note that $Y$ can be regarded intuitively as a blowing-up of $X$ at $v$, and there are two possibilities for constructing $Y$ according to the choices of the orientations on $D_i$'s.

\begin{prop}\label{prop:attach2cell}
The kernel $\ker \bar\iota_*$ is generated by $i_*(\ker \iota_*)$.
\end{prop}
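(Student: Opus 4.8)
The plan is to run the same failure-locus argument used for attaching a disk at a trivalent vertex, replacing the radial projection there with a retraction adapted to the band geometry. First I would fix a strong deformation retraction $r\colon D^2\to D_1\vee_\partial D_2$ of the filled disk $V=D^2$ onto the wedge $U=D_1\vee_\partial D_2$, chosen so that its fibers are single points except over the collapsed band region, where they are arcs. Since $Y$ is the push-out of $\iota$ and $i$, this $r$ extends to a retraction $r\colon Y\to X$ that is the identity away from $\overline{\st(K)}=i(U)$. As in the trivalent case, $r$ does not descend to configuration spaces, because it can collapse two distinct points to one, and I would record the failure set
\[
B_n^{r\text{-fail}}=\{\mathbf{x}\in B_n(Y)\mid \#(r(\mathbf{x}))<n\}\subset B_n(Y),
\]
consisting of configurations having two points in a common arc-fiber of $r$.

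Next I would verify that $B_n^{r\text{-fail}}$ has codimension $1$ in $B_n(Y)$: the only non-degenerate fibers are the arcs over the band region, so requiring two of the $n$ points to share such a $1$-dimensional fiber inside the $2$-dimensional $Y$ imposes exactly one condition. Codimension $1$ is precisely what the general-position machinery needs, so a generic loop meets $B_n^{r\text{-fail}}$ in finitely many points and a generic homotopy disk meets it in a $1$-dimensional subcomplex. Surjectivity of $\bar\iota_*$ (already implicit in the discussion of the two presentations preceding the statement) follows by pushing a generic loop off $B_n^{r\text{-fail}}$ into $B_n(X)$ near each crossing and applying $r$; for the proposition itself only the kernel computation is required.

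For the kernel, let $f=\{f_1,\dots,f_n\}\colon(D^2,\partial D^2)\to(B_n(Y),B_n(X))$ be a homotopy disk whose boundary loop represents a class in $\ker\bar\iota_*$, put in general position with respect to $B_n^{r\text{-fail}}$, so that the failure locus $F=f^{-1}(B_n^{r\text{-fail}})$ is a $1$-complex in the interior of $D^2$, a union of arcs meeting pairwise transversely in double points and possibly triple points (where three points of a configuration share one arc-fiber). Crossing an arc of $F$ records a swap of two points $f_i,f_j$ that cannot be performed inside $X$, the ambiguity in routing it through $D_1$ versus $D_2$ being measured by $\sigma_{1,1}\sigma_{1,2}^{-1}$. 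I would perturb $f$ near each intersection point so that the small encircling loop $\delta$ has $f(\delta)\subset B_n(X)$; then $r\circ f|_\delta$ lies in $\ker\iota_*$, which by the computation preceding the statement is generated by $\sigma_{1,1}\sigma_{1,2}^{-1}$ together with its images under the point-appending maps of Proposition~\ref{prop:injection}. Perturbing further to cancel the arcs of $F$ carrying no intersection point leaves only these encircling loops, and choosing paths from the basepoint of $\partial D^2$ to each $\delta$ exhibits $[\partial f]$ as a product of conjugates of elements of $i_*(\ker\iota_*)$, as claimed.

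The step I expect to be most delicate is this last local analysis: establishing rigorously that the monodromy around each double and triple point of $F$, transported by $r$ into $\mathbf{B}_n(X)$, lands in $\ker\iota_*$ and not in a larger subgroup, and that the reduction from the $2$-point relator $\sigma_{1,1}\sigma_{1,2}^{-1}$ in $\mathbf{B}_2(D_1\vee_\partial D_2)$ to arbitrary $n$ is correctly mediated by the point-appending inclusions, exactly as in the remark following the trivalent-vertex deformation. Once the local models at the two types of intersection are pinned down, the remainder is a direct transcription of the failure-locus argument already carried out for attaching a disk at a trivalent vertex.
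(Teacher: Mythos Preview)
Your approach is correct in spirit but different from the paper's, and the difference is instructive. You mimic the codimension-$1$ failure-locus argument from the trivalent-vertex deformation, choosing a retraction $r\colon D^2\to D_1\vee_\partial D_2$ whose non-point fibers are arcs spread over a $2$-dimensional band region; this forces the failure set to have codimension $1$, so a generic homotopy disk meets it in a $1$-complex whose double and triple points must then be analyzed. The paper instead observes that there is a \emph{single} arc $a\subset Y$ (the core of the band, i.e.\ the preimage of the wedge point $v$ under the blow-down) with $Y/a\cong X$, and takes $r$ to be the quotient map $Y\to Y/a$. Now the failure set consists of configurations having at least two points on this one arc, which is codimension $2$: a generic path misses it entirely, and a generic homotopy disk meets it in finitely many isolated points. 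Around each such point a small loop $\delta$ involves only the two configuration points lying on $a$; the image $r\circ f|_\delta$ lands directly in $\mathbf{B}_2(D_1\vee_\partial D_2)$ and one reads off from the four quarter-arcs of $\delta$ that it is the generator $\sigma_{1,1}\sigma_{1,2}^{-1}$ of $\ker\iota_*$.

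Your codimension-$1$ route can almost certainly be pushed through, but the step you flag as delicate is real extra work: you must classify the local models at double and triple points of a $1$-dimensional failure locus and verify each yields an element of $\ker\iota_*$. The paper's codimension-$2$ choice bypasses this entirely---there is no $1$-complex to analyze, just isolated points each contributing the single relator. The lesson is that when a quotient map with one nontrivial fiber is available, it gives a cleaner failure-locus argument than a retraction with a whole family of arc-fibers.
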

\begin{proof}
Let $a=\{v\}\times[0,1]\subset \overline X$ and let $r:Y\to Y/a=X$ be the quotient map. Then we define a failure $B_n^{r\text{-fail}}$ of $r$ as a subspace of $B_n(Y)$ consisting of configurations $x$ which intersect $a$ at least twice.
Then the codimension of $B_n^{r\text{-fail}}$ is 2 and therefore any path in $B_n(Y)$ can be homotoped into $B_n(X)$ as before.

For a disk $f$, the failure locus $F=f^{-1}(B_n^{r\text{-fail}})$ is a set $\{z_1,\dots, z_m\}$ of finite points in $D^2$, and for each $z\in F$, there exists $i$ and $j$ so that $\{f_i(z),f_j(z)\}\subset a$.
Now we consider two subspaces $F_i=f_i^{-1}(a)$ and $F_j=f_j^{-1}(a)$, whose intersection contains $z$. Then a small loop $\delta$ around $z$ can be separated into four pieces as depicted in Figure~\ref{fig:homotopydisk}.

\begin{figure}[ht]
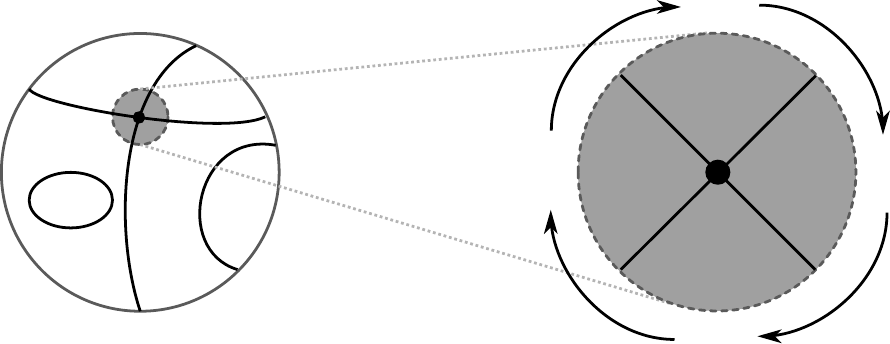
\begin{align*}
r(f(\delta_1))=\vcenter{\hbox{\includegraphics[scale=0.6]{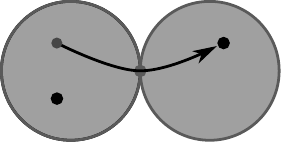}}}&\qquad
r(f(\delta_2))=\vcenter{\hbox{\includegraphics[scale=0.6]{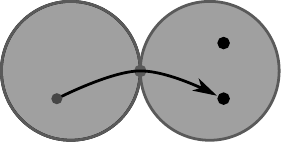}}}\\
r(f(\delta_3))=\vcenter{\hbox{\includegraphics[scale=0.6]{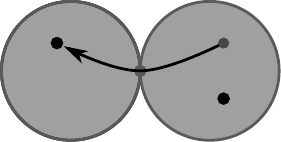}}}&\qquad
r(f(\delta_4))=\vcenter{\hbox{\includegraphics[scale=0.6]{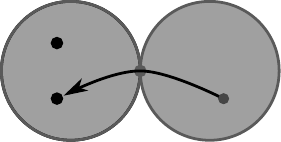}}}
\end{align*}
\caption{A null-homotopic disk and a loop $\delta$ near $z$ in a failure locus}
\label{fig:homotopydisk}
\end{figure}

Hence by choosing $\delta$ close enough to $z$, we may assume that all but only $f_i$ and $f_j$ remains stationary. However, the element $r_*(\{f_i(\delta),f_j(\delta(t))\})$ in $\mathbf{B}_2(D_1\vee_\partial D_2)$ is a generator of $\ker(\iota_*)$. In other words, any $z\in F$ corresponds to an element in the image of $\ker(\iota_*)$ under 
\[
\mathbf{B}_2(D_1\vee_\partial D_2)\to\mathbf{B}_n(D_1\vee_\partial D_2)\stackrel{i_*}{\longrightarrow}\mathbf{B}_n(X).
\]
\end{proof}

\subsection{2-dimensional capping-off}
Let $\gamma$ be an embedded loop in $\partial X$, which is not homotopically trivial and let $Y=X\sqcup_\gamma D^2$ be a capping-off of $X$ along $\gamma$. 
Then $X\to Y$ can never be a braid equivalence since their fundamental groups are different.

Since $\gamma\subset\partial X$, $\gamma\cap\br(X)=\emptyset$ and therefore $\overline{\st(\gamma)}$ is a manifold with boundary containing an annulus $A$, where one of whose boundary is $\gamma$.
We parametrize $A$ as 
\[A=\{(x,y)\in\mathbb{R}^2|1/2\le x^2+y^2\le 1\},\]
so that the inner boundary $\partial_{1/2}$ is precisely $\gamma$, and let $D^2$ be the unit disc in $\mathbb{R}^2$. 

Then we have the following commutative diagram
\[
\xymatrix{
X\ar[r]^-{\equiv_B}&Y\setminus\{0\}\ar[r]^{\bar\iota}&Y\\
A\ar[u]^i\ar[r]^-{\equiv_B}& D^2\setminus\{0\}\ar[u]^i\ar[r]_{\iota}&D^2\ar[u]_{\bar i}
}
\]
which satisfies $X\setminus i(A)\simeq (Y\setminus\{0\})\setminus i(D^2\setminus\{0\}) \simeq Y\setminus \bar i(D^2)$.
The braid equivalences in the left come from Proposition~\ref{prop:highercell} and Corollary~\ref{cor:2cell}.

\begin{prop}
The kernel $\ker(\bar\iota_*)$ of $\bar\iota_*$ is generated by the image of $\gamma\in\pi_1(A)\subset \mathbf{B}_n(A)$ under $i_*$.
\end{prop}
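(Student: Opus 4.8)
The plan is to mimic the failure-locus argument used for Proposition~\ref{prop:attach2cell} and for the disk-attaching deformation, now with the centre point $0\in\mathring{D}^2\subset Y$ playing the role of the subspace that the relevant retraction fails to avoid. First I would invoke the left-hand braid equivalences of the displayed diagram, supplied by Proposition~\ref{prop:highercell} and Corollary~\ref{cor:2cell}, to replace $\mathbf{B}_n(X)$ by $\mathbf{B}_n(Y\setminus\{0\})$ and thereby reduce the statement to describing the kernel of the map $\bar\iota_*\colon\mathbf{B}_n(Y\setminus\{0\})\to\mathbf{B}_n(Y)$ induced by the inclusion that fills in $0$. Under the identification $A\simeq D^2\setminus\{0\}$ coming from the lower row, the class $\gamma\in\pi_1(A)\subset\mathbf{B}_n(A)$ is exactly the loop of a single strand winding once around $0$, and $i_*(\gamma)$ is its image in $\mathbf{B}_n(Y\setminus\{0\})\cong\mathbf{B}_n(X)$. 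The local model behind the claim is the classical fact that $\ker\left(\mathbf{B}_n(A)\to\mathbf{B}_n(D^2)\right)$ is the normal closure of this winding element; the work is to transfer this model computation to the ambient complex.

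Next I would introduce the failure subspace
\[
B_n^{r\text{-fail}}=B_{n-1;1}\!\left(Y\setminus\{0\};\{0\}\right)\subset B_n(Y),
\]
consisting of the configurations one of whose points sits exactly at $0$. Since $\{0\}$ has codimension $2$ in the $2$-cell $D^2\subset Y$, this subspace has codimension $2$ in $B_n(Y)$. Consequently a generic loop in $B_n(Y)$ can be homotoped off $B_n^{r\text{-fail}}$ into $B_n(Y\setminus\{0\})$, which already yields surjectivity of $\bar\iota_*$; and a generic null-homotopy disk meets $B_n^{r\text{-fail}}$ only in finitely many interior points, with no two strands ever occupying $0$ simultaneously, as that locus is codimension $4$.

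Finally, given $c\in\ker\bar\iota_*$, I would choose a null-homotopy $f\colon(D^2,\partial D^2)\to(B_n(Y),B_n(Y\setminus\{0\}))$ in general position, so that its failure locus $f^{-1}(B_n^{r\text{-fail}})=\{z_1,\dots,z_m\}$ is a finite set of interior points, each recording a single strand passing through $0$. A small loop $\delta_k$ encircling $z_k$ maps, after pushing that strand off $0$, to the loop of that one strand winding once around $0$, which is conjugate to $i_*(\gamma)^{\pm1}$, while the remaining strands stay in $Y\setminus\{0\}$ throughout. Deleting small disks about the $z_k$ converts $f$ into a homotopy inside $B_n(Y\setminus\{0\})$ that exhibits $c$ as a product of conjugates of the classes of the $\delta_k$, hence of $i_*(\gamma)^{\pm1}$; this shows $\ker\bar\iota_*$ is the normal closure of $i_*(\gamma)$. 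The main obstacle is precisely the local analysis at each $z_k$: one must confirm that the generic failure is an isolated single-strand crossing of $0$ and that the encircling loop $\delta_k$ is exactly the winding generator $\gamma$ up to conjugation and orientation, so that the transfer of the model kernel computation is faithful; the remainder is the now-routine general-position bookkeeping already carried out in the preceding propositions.
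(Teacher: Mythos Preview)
Your proposal is correct and follows essentially the same route as the paper's proof: both identify the failure locus $B_{n-1;1}(Y\setminus\{0\};\{0\})$ as a codimension-$2$ subspace, deduce that a generic null-homotopy disk meets it in finitely many points, and then analyze a small loop around each such point to see that only one strand moves and that it winds once around $0$, yielding a conjugate of $i_*(\gamma)^{\pm1}$. Your write-up is somewhat more explicit about the reduction via the braid equivalences in the left column of the diagram and about the normal-closure conclusion, but the argument is the same.
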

\begin{proof}
We consider a subspace $B_{n-1;1}(Y\setminus\{0\}; \{0\})=B_{n-1}(Y\setminus\{0\})\times\{0\}$ of $B_n(Y)$. Then it is obviously of codimsion 2.
Hence we may assume that any path can be homotoped to a path avoiding $0$ and any homotopy disk intersects $\{0\}$ finitely many times.

Moreover, for a homotopy $f$ and $z\in D^2$ with $0\in f(z)$, the image $f(\delta(t))$ of a small enough loop $\delta(t)$ enclosing $z$ via $f$ is lying in $B_n(Y\setminus\{0\})$ and we may assume that all but 1 point, say $f_1(\delta(t))$, remains stationary since $\delta(t)$ can be arbitrarily small.

On the contrary, the image of $f_1(\delta(t))$ is homotopic to $\gamma$ or its inverse. This completes the proof.
\end{proof}

\end{document}